%
%
\documentclass[10pt,a4paper,twoside]{article}
\usepackage{amsmath}
\usepackage{amsthm}
\usepackage{amssymb,latexsym}
\usepackage{amscd}
\input xy \xyoption{all} \CompileMatrices

\usepackage{ifpdf,paralist}
\ifpdf \usepackage[pdftex,hyperindex]{hyperref}
\else  \usepackage[hypertex,hyperindex]{hyperref}
\fi
\hypersetup{colorlinks=true}

\theoremstyle{plain}
  \newtheorem{theorem}                    {Theorem}       [section]
  \newtheorem{lemma}      [theorem]       {Lemma}
  \newtheorem{definition} [theorem]       {Definition}
  
  \newtheorem{corollary}  [theorem]       {Corollary}
  \newtheorem{proposition}[theorem]       {Proposition}
  
\theoremstyle{definition}
  \newtheorem{remark}     [theorem]       {Remark}
  
  \newtheorem{example}    [theorem]       {Example}

\def\YEAR{\year}\newcount\VOL\VOL=\YEAR\advance\VOL by-1995
\def\firstpage{1}\def\lastpage{1000}
\def\received{}\def\revised{}
\def\communicated{}

\makeatletter
\def\magnification{\afterassignment\m@g\count@}
\def\m@g{\mag=\count@\hsize6.5truein\vsize8.9truein\dimen\footins8truein}
\makeatother

\oddsidemargin1.91cm\evensidemargin1.91cm\voffset1.4cm


\textwidth12.0cm\textheight19.0cm

\font\eightrm=cmr8
\font\caps=cmcsc10                    
\font\Caps=cmcsc10 scaled \magstep1   
\font\scaps=cmcsc8

%


\pagestyle{myheadings}
\pagenumbering{arabic}
\setcounter{page}{\firstpage}

\makeatletter
\setlength\topmargin {14\p@}
\setlength\headsep   {15\p@}  
\setlength\footskip  {25\p@}  
\setlength\parindent {20\p@} 
\@specialpagefalse\headheight=8.5pt
\def\DocMath{{\def\th{\thinspace}\scaps Documenta Math.}}
\renewcommand{\@oddfoot}{\hfill\scaps Documenta Mathematica 
    \number\VOL\  (\number\YEAR) \number\firstpage--\lastpage\hfill}
\renewcommand{\@evenfoot}{\ifnum\thepage>\lastpage\hfill\scaps
    Documenta Mathematica \number\VOL\  (\number\YEAR)\hfill\else\@oddfoot\fi}%

\renewcommand{\@evenhead}{%
    \ifnum\thepage>\lastpage\rlap{\thepage}\hfill%
    \else\rlap{\thepage}\slshape\leftmark\hfill{\caps\SAuthor}\hfill\fi}%
\renewcommand{\@oddhead}{%
    \ifnum\thepage=\firstpage{\DocMath\hfill\llap{\thepage}}%
    \else{\slshape\rightmark}\hfill{\caps\STitle}\hfill\llap{\thepage}\fi}%
\makeatother

\def\TSkip{\bigskip}
\newbox\TheTitle{\obeylines\gdef\GetTitle #1
\ShortTitle  #2
\SubTitle    #3
\Author      #4
\ShortAuthor #5
\EndTitle
{\setbox\TheTitle=\vbox{\baselineskip=20pt\let\par=\cr\obeylines%
\halign{\centerline{\Caps##}\cr\noalign{\medskip}\cr#1\cr}}%
	\copy\TheTitle\TSkip\TSkip%
\def\next{#2}\ifx\next\empty\gdef\STitle{#1}\else\gdef\STitle{#2}\fi%
\def\next{#3}\ifx\next\empty%
    \else\setbox\TheTitle=\vbox{\baselineskip=20pt\let\par=\cr\obeylines%
    \halign{\centerline{\caps##} #3\cr}}\copy\TheTitle\TSkip\TSkip\fi%
\centerline{\caps #4}\TSkip\TSkip%
\def\next{#5}\ifx\next\empty\gdef\SAuthor{#4}\else\gdef\SAuthor{#5}\fi%
\ifx\received\empty\relax
    \else\centerline{\eightrm Received: \received}\fi%
\ifx\revised\empty\TSkip%
    \else\centerline{\eightrm Revised: \revised}\TSkip\fi%
\ifx\communicated\empty\relax
    \else\centerline{\eightrm Communicated by \communicated}\fi\TSkip\TSkip%
\catcode'015=5}}\def\Title{\obeylines\GetTitle}
\def\Abstract{\begingroup\narrower
    \parskip=\medskipamount\parindent=0pt{\caps Abstract. }}
\def\EndAbstract{\par\endgroup\TSkip}

\long\def\MSC#1\EndMSC{\def\arg{#1}\ifx\arg\empty\relax\else
     {\par\narrower\noindent%
     2010 Mathematics Subject Classification: #1\par}\fi}

\long\def\KEY#1\EndKEY{\def\arg{#1}\ifx\arg\empty\relax\else
	{\par\narrower\noindent Keywords and Phrases: #1\par}\fi\TSkip}

\newbox\TheAdd\def\Addresses{\vfill\copy\TheAdd\vfill
    \ifodd\number\lastpage\vfill\eject\phantom{.}\vfill\eject\fi}
{\obeylines\gdef\GetAddress #1
\Address #2 
\Address #3
\Address #4
\EndAddress
{\def\xs{4.3truecm}\parindent=0pt
\setbox0=\vtop{{\obeylines\hsize=\xs#1\par}}\def\next{#2}
\ifx\next\empty 
     \setbox\TheAdd=\hbox to\hsize{\hfill\copy0\hfill}
\else\setbox1=\vtop{{\obeylines\hsize=\xs#2\par}}\def\next{#3}
\ifx\next\empty 
     \setbox\TheAdd=\hbox to\hsize{\hfill\copy0\hfill\copy1\hfill}
\else\setbox2=\vtop{{\obeylines\hsize=\xs#3\par}}\def\next{#4}
\ifx\next\empty\ 
     \setbox\TheAdd=\vtop{\hbox to\hsize{\hfill\copy0\hfill\copy1\hfill}
                \vskip20pt\hbox to\hsize{\hfill\copy2\hfill}}
\else\setbox3=\vtop{{\obeylines\hsize=\xs#4\par}}
     \setbox\TheAdd=\vtop{\hbox to\hsize{\hfill\copy0\hfill\copy1\hfill}
	        \vskip20pt\hbox to\hsize{\hfill\copy2\hfill\copy3\hfill}}
\fi\fi\fi\catcode'015=5}}\gdef\Address{\obeylines\GetAddress}

\hfuzz=0.1pt\tolerance=2000\emergencystretch=20pt\overfullrule=5pt




\def\LOCAL{\jobname.files}

\begin{document}
\Title
Computations in intersection rings of flag bundles
\ShortTitle 
\SubTitle   
\Author 
Daniel R. Grayson\footnote{Grayson has been supported by NSF grants DMS 08-10948, 10-02171.},
Alexandra Seceleanu\footnote{Seceleanu has been supported by NSF grant DMS 11-03176, 21-01225.},
and 
Michael E. Stillman\footnote{Stillman has been supported by NSF grants DMS 08-10909, 10-02210, 20-01367.}
\ShortAuthor 
Grayson, Seceleanu, and Stillman
\EndTitle

\def\degreemult#1#2{{#1}\langle#2\rangle}
\def\Sq{\operatorname{Sq}}
\def\Spec{\operatorname{Spec}}
\def\Mult{\operatorname{Mult}}
\def\Pr{\operatorname{Pr}}
\def\l{\operatorname{\ell}}
\def\lm{\operatorname{\it \ell m}}
\def\lc{\operatorname{\it \ell c}}
\def\implies{\Rightarrow}
\def\iff{\Leftrightarrow}
\def\impliedby{\Leftarrow}
\def\deg{\operatorname{deg}}
\def\top{{\rm top}}
\def\PF{\operatorname{PF}}
\def\rank{\operatorname{rank}}
\def\rem{\,\%\,}
\def\quo{\,{/}\!{/}\,}
\def\iso{{\rm iso}}
\def\isom{\cong}
\def\isomap{\stackrel\isom\to}
\def\ph{\varphi}
\def\tph{\tilde\varphi}
\def\red{\operatorname{red}}
\def\Fl{\operatorname{Fl}}
\def\FF{{\mathbb F}}
\def\calF{{\mathcal F}}
\def\QQ{{\mathbb Q}}
\def\PP{{\mathbb P}}
\def\ZZ{{\mathbb Z}}
\def\NN{{\mathbb N}}
\def\LL{{\mathcal L}}
\def\OO{{\mathcal O}}
\def\EE{{\mathcal E}}
\def\DD{{\mathcal D}}
\def\gb{\operatorname{gb}}
\def\Gb{Gr\"obner basis}
\def\Gbs{Gr\"obner bases}
\def\Split{\operatorname{Split}}

\Abstract 
Let $\EE$ be a vector bundle on a nonsingular variety $X$.  A partition of its
rank as an ordered sum of natural numbers gives rise to a flag variety $\FF$
whose points parametrize filtrations of $\EE$ whose consecutive subquotients
have ranks given by the members of the partition.  The Chern classes of the
corresponding tautological bundles on $\FF$ generate the intersection ring
$A(\FF)$ of $\FF$.  Using a convenient ordering of the monomials in the Chern
classes, we give an explicit description of a minimal Gr\"obner basis for the
corresponding ideal of relations presenting $A(\FF)$.

This is used it to justify an algorithm implemented in {\em Macaulay2} for
computation of the push-forward of a cycle class.

We also generalize the result and algorithm to cover isotropic flag bundles.

\EndAbstract
\MSC 
14C17
\EndMSC
\KEY 
flag bundle,
isotropic flag bundle,
intersection ring,
push-forward map,
Gr\"obner basis,
Chern classes
\EndKEY
\Address 
2813 Belmont Drive
Henderson
Nevada 89074
USA
{\href{http://dangrayson.com}{http://dangrayson.com}}
danielrichardgrayson\char`\@gmail.com
\Address
Department of Mathematics
University of Nebraska
Lincoln, Nebraska 68588
USA
{\href{http://www.math.unl.edu/~aseceleanu2/}{http://www.math.unl.edu/\char`\~aseceleanu2/}}
aseceleanu\char`\@unl.edu
\Address
Department of Mathematics
Cornell University
Ithaca, New York 14853
USA
{\href{http://math.cornell.edu/~mike/}{http://math.cornell.edu/\char`\~mike/}}
mike\char`\@math.cornell.edu
\Address
\EndAddress

\section*{Introduction}

Let $A$ be a commutative ring and let $f(x) = x^n + b_1 x^{n-1} + \dots + b_n$
be a monic polynomial of degree $n$ with coefficients $b_i \in A$.  Let $n =
n_1 + \dots + n_r$ be an expression of $n$ as a sum of $r$ natural numbers. Consider the polynomial ring
$R=A[c_{1,1},\dots,c_{1,n_1};\dots;c_{r,1},\dots,c_{r,n_r}]$ and for $i = 1,
\dots, r$ the monic polynomials
\[g_i(x) = x^{n_i} + c_{i,1} x^{n_i-1} + \dots + c_{i,n_i} \in
R[x].
\]  Let $I \subseteq R$ be the ideal generated by the polynomial equations arising from identifying the coefficients of the following identity in $R[x]$.
\[f(x) = g_1(x) \cdot {\dots} \cdot g_r(x).\]  The ring
$\PF_{n_1,\dots,n_r}(f) := R/I$ is the universal $A$-algebra supporting a
factorization of $f(x)$ into a sequence of $r$ monic factors  having degrees ${n_1,\dots,n_r}$.  If $A$ is a graded ring and $\deg b_i =i$ for all $i$, then we may set $\deg c_{i,j} = j$ for all $i$ and $j$,
ensuring that the  ideal $I$ and the quotient ring $R/I$ are 
homogeneous.

In this paper we examine Gr\"obner bases of the family of ideals described above.  We show that, for a
suitable ordering of the monomials in $R$, the lead terms of a \Gb{} of the
ideal $I$ can be explicitly described.  The monomials not divisible by such a
lead term give an explicit finite basis for $\PF_{n_1,\dots,n_r}(f)$.

Our motivation and interest come from intersection theory.  Let $\EE$ be a
vector bundle of rank $n$ on a nonsingular variety $X$, and let $c(\EE) = 1 +
c_1\EE + \dots + c_n \EE$ denote the total Chern class (or Chern polynomial) of
$\EE$ in the (graded) intersection (Chow) ring $A(X)$ of algebraic cycles on $X$
modulo rational equivalence.  Let $\pi : \FF := \Fl_{n_1,\dots,n_r}(\EE) \to X$
denote the flag bundle of $\EE$ that parametrizes filtrations of $\EE$ over points of $X$, where the successive
subquotients from the filtration have ranks $n_1, \dots, n_r$.  In other words, $\FF$ supports a
{\em tautological} filtration 
\[\pi^* \EE = \EE_r \supseteq \EE_{r-1} \supseteq \dots
\supseteq \EE_1 \supseteq \EE_0 = 0\]
 with $\rank(\EE_i/\EE_{i-1}) = n_i$ for $i
= 1, \dots, r$, and with $\FF$ being the universal variety over $X$ with such a
filtration.  The Chern polynomial of $\EE$, pulled back to $\FF$, acquires
factors 
\[\pi^* c(\EE) = c(\pi^*\EE) = c(\EE_r/\EE_{r-1}) \cdot \dots \cdot
c(\EE_2/\EE_1) \cdot c(\EE_1/\EE_0)\] whose degrees are $n_1,\dots,n_r$.
Grothendieck showed (see \cite[Theorem 1, p.~4-19]{grothendieck-1958} and
Theorem \ref{groththm} below) that the corresponding homomorphism
$\PF_{n_1,\dots,n_r}(c_x(\EE)) \to A(\FF)$ is an isomorphism, where $c_x(\EE)$
denotes the polynomial $x^n + c_1(\EE) x^{n-1} + \dots + c_n(\EE) \in A(X)[x]$.
Our Theorem \ref{theorem:ourtheorem} provides an explicit basis of $A(\FF)$ as a free
$A(X)$-module, and it follows from Corollary \ref{computepilowerstar} that the
map $\pi_* : A(\FF) \to A(X)$ can be computed by taking the coefficient of the
basis element of highest degree.

In section \ref{section:isotropic-flag-bundle} we generalize the results
referred to above to the case of {\em isotropic} Grassmannians and flag
bundles. 

The identification of the leading terms of the ideals defining intersection
rings of flag bundles and isotropic flag bundles in this paper was suggested
and motivated by computations with {\em Macaulay2} \cite{M2}.  The resulting
algorithm for computing push-forward maps on intersection rings of flag
bundles, described and justified by Corollary \ref{computepilowerstar}, was
implemented (for Grassmannian bundles) in the {\em Maple} package {\em
  Schubert} \cite{schubert}, written by Sheldon Katz and Stein-Arild Str\o{}mme
starting in November, 1991, and (for flag bundles) in its intended replacement,
the {\em Macaulay2} package {\em Schubert2}, written by Grayson, Stillman,
Str\o{}mme,  Eisenbud, and  Crissman, starting in May, 2007. This paper provides a rigorous treatment of the theory supporting these computations.

Further work on algebras defining universal factorizations appears in a recent preprint of Sam--Snowden \cite{SS21} where properties of these rings are studied and used to compute the cohomology of flag supermanifolds.

\section{\Gbs{} over arbitrary coefficient rings}\label{section:arb}

Let $A$ be a commutative ring.  We'll assume nothing is known, algorithmically, about $A$ other than the basic ring operations:
addition, subtraction, multiplication, and equality.  In particular, there are no algorithms for testing divisibility.

Given $m \in \NN$, let $[x] := [x_1, \dots, x_m]$ denote the free commutative monoid consisting of the monomials $x^\alpha =
x_1^{\alpha_1} \dots x_m^{\alpha_m}$ formed from the generators $x_1, \dots, x_m$.  
A {\em monomial ordering} on $[x]$ is an ordering on $[x]$ that is compatible
with multiplication and makes $[x]$ well ordered. Compatibility of $<$ with multiplication means that $u<v$ implies $mu<mv$ for any monomials $m,u,v$, and the property that $[x]$ is well ordered by $<$  is equivalent to requiring  that $1<x^\alpha$ for any monomial $x^\alpha\neq 1$. The latter property is sometimes referred to as $<$ being a global ordering. All monomial orderings used in this paper are global orderings. 

Throughout this section, let $R$ be the polynomial ring (and monoid ring) $R=A[x] = A[x_1, \dots, x_m]$.

Let $<$ be a monomial ordering of $[x]$.
For a nonzero polynomial $f \in R$ we write $\l_<(f) = \lc_<(f) \lm_<(f)$, where
$\l_<(f) \in R$ is the lead term of $f$, $\lc_<(f) \in A$ is the lead coefficient
of $f$, and $\lm_<(f) \in [x]$ is the lead monomial of $f$.
A polynomial $f \in R$ will be called {\em monic} (with respect to $<$) if $f \ne 0$ and $\lc_<(f) =
1$.  An ideal $I$ of $R$ will be called {\em monic} (with respect to $<$) if the ideal $\l_<(I)$
generated by its lead terms can be generated by a set of (monic) monomials.  Thus an ideal $I$ is monic
if and only if for any nonzero $a \in A$, if $a x^\alpha \in \l_<(I)$ then
$x^\alpha \in \l_<(I)$.
We say that $R/I$ is a {\em monic} $A$-algebra (with respect to $<$) when $I$ is monic.

Let $<$ be a monomial ordering of $[x]$ and $I$ a monic ideal. Intersection of $\l_<(I)$
with the Noetherian ring $\ZZ[x_1, \dots, x_m]$ shows that $\l_<(I)$ can be
generated by a finite number of monomials.
A finite subset $G$ of $I$ is called a {\em \Gb{} of $I$} (with respect to $<$) if $\l_<(G) := \{\l_<(g)\mid g \in G\}$ generates $\l_<(I)$. Because $[x]$ is well ordered, it follows
that such a subset $G$ generates $I$. A subset $G$ of $R$ is called a {\em \Gb} (with respect to $<$) if it is a \Gb{}
of the ideal it generates.  A {\em monic \Gb} (with respect to $<$) is a \Gb{} that consists of monic polynomials.  An ideal $I$ is monic (with respect to $<$) if and only if it has a monic \Gb.  Say
that a monic \Gb{} is {\em minimal} if no lead term of an element divides the
lead term of another element.  Any monic \Gb{} can be converted to a minimal
one by discarding each element whose lead term is divisible by the lead term of
another element.  A \Gb{} $G$ is called {\em auto-reduced} if it is monic and no term in any polynomial $g_i\in G$ is divisible by $\l_<(g_j)$, whenever $i\neq j$.

{\em Reduction} of a polynomial $f \in R$ modulo a set $G$ of monic polynomials
is the nondeterministic algorithm that proceeds as follows: replace any
monomial $x^{\alpha+\beta}$ occurring in $f$ by $x^\alpha h$ if there is an
element of $G$ of the form $x^\beta - h$ with lead term $x^\beta$, repeating as
long as possible.  The algorithm terminates because $[x]$ is well ordered.  

If
$G$ is a monic \Gb{} of $I$, then the reduction algorithm modulo $G$ yields a result for which no monomial is divisible by a lead monomial of $I$.  Moreover, the result is unchanged if $f$ is replaced by any element of the coset of $f$ in $R/I$.  In particular, the reduction is $0$ if and only if $f \in I$. 
The reduction for a polynomial $f \in R$ modulo a \Gb{} for the ideal $I$  does not depend on the choice of \Gb{} for $I$, but only on the choice of monomial ordering (a proof for this fact follows along the lines of \cite[Chapter 2, \S 6, Exercise 4]{CLO}). Thus the following map is well defined.

\begin{definition}\label{definition:redI}
  Let $<$ be a monomial ordering of $[x]$.
  For a monic ideal $I\subset R$, we let $\red_I : R \to R$ denote the $A$-linear
  function that sends $f$ to its reduction modulo a \Gb{} of $I$. 
\end{definition}

We recall now a few  useful equivalent characterizations for  \Gbs{}. 

\begin{lemma}\label{prop:equivGB}
  Let $<$ be a monomial ordering of $[x]$.
 Let $G$ be a set of monic polynomials that generates an ideal $I\subset R$. The following statements are equivalent:
 \begin{enumerate}
 \item $G$ is a monic \Gb{} of $I$ with respect to $<$;
 \item if $f\in I$, then there exists $g\in G$ so that $\l_<(f)$ is divisible by $\l_<(g)$; and
 \item every $f\in I$ reduces to 0 with respect to $G$ and $<$.
 \end{enumerate}
 Furthermore, each of the statements above implies that 
 \begin{enumerate}
 \item[4.] the function $\red_I$ induces an $A$-linear isomorphism $R/I \isomap \red_I(R)$ and $\red_I(R)$ is a free $A$-submodule of $R$ generated by the monomials not contained in $\l_<(I)$.
 \end{enumerate}
\end{lemma}
\begin{proof}
The case $R=K[x]$, with $K$ a field is addressed in \cite[Proposition 5.38]{BW}. In particular, statements 2 and 3  above appear as statements (iii) and (i) in the referenced result and are shown to be equivalent characterizations for $G$ being a \Gb{} for $I$. The same arguments apply verbatim if the ground field $K$ is replaced by a commutative ring $A$ (and thus $R=A[x]$) provided that division by elements of $G$ is possible. This is the case since the elements in $G$ are assumed to be monic, hence the equivalence of the first three statements follows. Furthermore, assuming that $G$ is a monic \Gb{} of $I$ yields that $I$ is monic, so the function $\red_I$ is well defined. Moreover, by part 3, $I$ is contained in the kernel of this map and the converse containment is clear from the description of the reduction process. The validity of statement 4 thus follows from the first isomorphism theorem.
\end{proof}

 We record below some aspects of the behavior of Gr\"obner bases under base change which are important for computations.

\begin{lemma}\label{lemma:basechange}
  A monic \Gb{} $G$ of $I \subseteq A[x]$ with respect to a monomial ordering $<$ of $[x]$ is stable under base change along a ring homomorphism $\ph : A \to A'$ in the sense that $\ph(G) \subseteq A'[x]$
  is a \Gb{} of $I\cdot A'[x]$ (with respect to $<$).
\end{lemma}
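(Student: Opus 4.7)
The plan is to verify two things: (a) $\ph(G) \subseteq I \cdot A'[x]$, which is immediate from $G \subseteq I$; and (b) the lead monomials of $\ph(G)$ generate the lead-term ideal of $I \cdot A'[x]$. Since $\ph(1) = 1$, each $\ph(g)$ is still monic with $\lm(\ph(g)) = \lm(g)$, so the ideal of $A'[x]$ generated by $\l(\ph(G))$ is exactly the extension $\l(I) \cdot A'[x]$. Thus (b) is equivalent to the equality
\[
\l(I \cdot A'[x]) \;=\; \l(I) \cdot A'[x].
\]

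To prove this equality I would base-change the splitting provided by reduction. Definition~\ref{definition:redI} furnishes an $A$-module decomposition $R = I \oplus L$, where $L := \red_I R$ is the free $A$-submodule on the monomials not in $\l(I)$. Tensoring with $A'$ over $A$ preserves direct sums, and the natural map $I \otimes_A A' \to A'[x]$ is injective with image $I \cdot A'[x]$ precisely because $I$ is an $A$-module direct summand of $R$. The upshot is a decomposition $A'[x] = I \cdot A'[x] \oplus L'$, in which $L'$ is the free $A'$-submodule on the same set of monomials not in $\l(I)$.

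I would finish using the associated projection $\pi : A'[x] \to L'$. A case check shows that $\pi$ fixes each monomial $x^\alpha \notin \l(I)$, while for $x^\alpha \in \l(I)$ it sends $x^\alpha$ to $\ph(\red_I(x^\alpha))$, a polynomial all of whose monomials are strictly smaller than $x^\alpha$; this is because the reduction algorithm applied to the monomial $x^\alpha$ first replaces $x^\alpha$ by strictly lower-order terms and only ever introduces smaller monomials thereafter. Now for any hypothetical nonzero $h \in I \cdot A'[x]$ with $\lm(h) = x^{\alpha^*}$ satisfying $x^{\alpha^*} \notin \l(I)$, reading off the coefficient of $x^{\alpha^*}$ on both sides of $0 = \pi(h)$ isolates the leading coefficient of $h$ — all other terms $\pi(x^\alpha)$ with $\alpha < \alpha^*$ involve only strictly smaller monomials — and forces that coefficient to vanish, contradicting $\lm(h) = x^{\alpha^*}$. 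Hence $\lm(h)$ is divisible by some $\lm(g) = \lm(\ph(g))$, completing (b).

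The only step that requires genuine thought is the base-change one; no flatness hypothesis on $A'$ is needed precisely because $\red_I$ exhibits $I$ as an $A$-module direct summand of $R$. Everything else — monicity of $\ph(G)$, preservation of lead monomials, and the coefficient bookkeeping in the contradiction — is a direct calculation once the decomposition $A'[x] = I \cdot A'[x] \oplus L'$ is in hand.
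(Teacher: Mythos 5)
Your proof is correct and uses the same key idea as the paper's: the monomials outside $\l(I)$ give a free $A$-module splitting $R = I \oplus \red_I R$ that survives tensoring with $A'$, and the survival of this monomial basis in $A'[x]/(I\cdot A'[x])$ forces the lead monomial of any nonzero element of $I\cdot A'[x]$ to lie in $\l(I)$. The paper compresses this into a single sentence; you have simply written out the projection and the coefficient bookkeeping that it leaves implicit.
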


\begin{proof}
  The basis of the free $A$-module $A[x]/I$ given by the monomials not
  contained in $\l_<(I)$ yields a basis of
  $(A[x]/I) \otimes_A A' \cong A'[x]/(I\cdot A'[x])$
  over $A'$ consisting of the monomials in $\ph(\l_<(G))$.
  Thus none of those monomials can occur as leading terms in any element of
  $I\cdot A'[x]$. The equivalence of statements 1 and 3 in Lemma
  \ref{prop:equivGB} yields the result.
\end{proof}

\begin{lemma} \label{lemma:ordering}
Let $I \subseteq A[x]$ be a monic ideal with respect to a monomial ordering $<$, and let $G$ be a monic \Gb{} of $I$ with respect to $<$.  Suppose $\ll$ is also a monomial ordering on $[x]$, and suppose that each $g \in G$ has the same lead term for $<$ and $\ll$.  Then $G$  is
  a \Gb{} of $I$ with respect to $\ll$.
\end{lemma}
\begin{proof}
  According to the implication $1 \Leftarrow 2$ in Lemma \ref{prop:equivGB},
  it will be enough to show that
  given $f \in I$ with $f \ne 0$ and $\l_\ll(f)=a x^\alpha$,
  the monomial $x^\alpha$ is divisible by a lead monomial of $G$. Reduction of $f$ by $G$ with respect to $<$ succeeds in reducing $f$ to $0$ because $G$ is a \Gb{}.  The same is true upon reduction of $f$ by $G$ with respect to $\ll$ because the reduction algorithm depends
  only on the set $G$ and on which terms of its elements are the lead terms,
  and those have not changed; i.e., the same sequence of division steps can be used.
  During the successful reduction, individual terms of $f$ are replaced by terms that are smaller in the ordering $\ll$.
  Hence the lead term $\l_\ll(f)=a x^\alpha$ must eventually be one of the terms that is reduced by the divison process,
  rendering $x^\alpha$ divisible by a lead monomial of $G$.
\end{proof}

\begin{lemma}\label{lemma:gbfields}
Fix a monomial ordering $<$  on $[x]$. A finite set of monic polynomials  $G$ is a \Gb{} in $ \ZZ[x] = \ZZ[x_1, \dots, x_m]$ if and only if it is a \Gb{} in $ \QQ[x] = \QQ[x_1, \dots, x_m]$. 
\end{lemma}

\begin{proof}
Let $I$ be the ideal generated by $G$ in $\ZZ[x]$ and $J$ the ideal generated by $G$ in $\QQ[x]$.  By Lemma  \ref{prop:equivGB}, the statement reduces to saying that for each $f\in I$ there exists a $g\in G$ so that $\l_<(f)$ is divisible by $\l_<(g)$ if and only if for each $f\in J$ there exists a $g\in G$ so that  $\l_<(f)$ is divisible by $\l_<(g)$. Since  for each $f\in J$ there is an integer $c$ such that $cf\in I$,  and since $\l_<(g)$ is monic for all $g\in G$, the desired equivalence follows.  \end{proof}

\begin{remark}
A statement analogous to Lemma \ref{lemma:gbfields} holds for any field $F$:  a finite set of monic polynomials  $G$ is a \Gb{} in $ \ZZ[x] = \ZZ[x_1, \dots, x_m]$ if and only if the image of $G$ under the map $\pi':\ZZ[x]\to F[x] =F[x_1, \dots, x_m]$  induced by the unique ring homomorphism $\pi:\ZZ\to F$ is a \Gb{} in $F[x]$. In particular this holds if $F$ is a field of positive characteristic. The proof relies on the fact that $\pi'$ preserves the leading terms of polynomials in $G$ since these are monic. 
\end{remark}
  
We now consider towers of polynomial rings of the form $A[t_1,\dots,t_n][x_1,\dots,x_m]$ with a focus on several monomial orderings that can arise and their interplay. A polynomial in such a ring is a formal linear combination of monomials in the $x$ variables, whereas a polynomial in $A[t_1,\dots,t_n,x_1,\dots,x_m]$ is a formal linear combination of monomials in all the variables. It is important to distinguish the two situations because we will be  ordering the monomials in these rings.
    
  \begin{definition}\label{prodorddef}
  Suppose $[t] := [t_1,\dots,t_n]$ and $[x] := [x_1,\dots,x_m]$ are monoids
  with monomial orderings $<_t$ and $<_x$ respectively.  The {\em block ordering} denoted by $<_{x,t}$ on the monoid
  $[t,x] := [t_1,\dots,t_n, x_1,\dots,x_m]$ is defined by 
  \[x^\alpha t^\beta <_{x,t} x^\gamma t^\delta \text{\quad  if and only if \quad either }x^\alpha <_x
  x^\gamma, \text{ or } x^\alpha = x^\gamma \text{and }t^\beta <_t t^\delta.
  \] We will often employ the shorthand notation $[x]\gg[t]$ for the block ordering $<_{x,t}$ except when explicit comparisons between individual polynomials are necessary. 
 
 If $[x_1],[x_2],\ldots[x_n]$ are monoids one defines the block ordering $[x_1]\gg[x_2]\gg\ldots\gg[x_n]$ by induction on $n$, as the ordering $([x_1]\gg[x_2])\gg\ldots)\gg[x_n]$.
\end{definition}

When considering the monomial orderings $<_x, <_t$ and $<_{x,t}$ defined above, $\l_x, \l_t $ and $\l_{x,t}$ denote the lead terms for the respective orderings. Similar notation is used for the lead monomial and the lead coefficient. The embedding $A[t] \hookrightarrow A[t,x]$ preserves the monomial ordering and thus a
monic polynomial of $A[t]$ remains monic and has the same lead monomial when
regarded as a polynomial in $A[t,x]$. Let $\psi : A[t][x] \to A[t,x]$ be the isomorphism relating the tower ring
$A[t][x]$ to its {\em flattened} form $A[t,x]$. For $h \in A[t,x]$, we note
that the definition of the block ordering gives $\l_{x,t}(\psi(h)) = \l_{t}(\lc_{x}(h)) \lm_{x}(h)$.

\begin{definition}
\label{<<to<x}
Given an ordering $\ll$ on the monomials of $A[t,x]$, which need not be the block ordering described above, one may speak of the monomial ordering $<_x$ on $[x]$ (equivalently on $A[t][x]$) induced from $\ll$ by 
setting  $m<_x m'$ in $[x]$ whenever $m\ll m'$ in $[t,x]$.  
\end{definition}

\begin{proposition} \label{lemma:ordering-b}
  Suppose that $I \subseteq A[t,x] = A[t_1,\dots,t_n,x_1,\dots,x_m]$ is a monic ideal
  and that $G$ is a monic \Gb{} of $I$ with respect to an ordering $\ll$, such that the lead terms of $G$ involve none of the variables $t_i$.  
  
  Consider the monomial ordering $<_x$ on $A[t][x]$ obtained by restricting the  ordering $\ll$ to $[x]$ as in definition \ref{<<to<x} and let $\psi : A[t][x] \to A[t,x]$ be the isomorphism relating the tower ring $A[t][x]$ to its flattened form $A[t,x]$. Then the set $\psi^{-1}(G)$ is a \Gb{} for $\Psi^{-1}(I)$ with respect to the ordering $<_x$.
\end{proposition}
\begin{proof}
First note that the elements of $\Psi^{-1}(G)$ retain the same lead terms with respect to $<_x$ as for $\ll$  because $x^\alpha \gg t^\beta x^\gamma\gg x^\gamma$ whenever $x^\alpha$ is the leading term of a polynomial in $G$ and  $t^\beta x^\gamma$ is another monomial in the same polynomial. (Here we use that $t^\beta \gg 1$ for any monomial $t^\beta\neq 1$.) Now Lemma  \ref{lemma:ordering} can be applied to deduce that $\Psi^{-1}(G)$ is a monic \Gb{} of $\Psi^{-1}(I)$ with respect to the monomial ordering $<_x$. 

\end{proof}

\begin{lemma}\label{lemma:specialization}
  Let $R = \ZZ[t,x] = \ZZ[t_1, \dots, t_m,x_1,\dots,x_n]$ be a polynomial ring
  with a monomial ordering $\ll$.  Let $G\subseteq R$ be a monic \Gb{} whose lead
  monomials involve none of the elements $t_i$.  Let $A$ be a commutative ring and let $\ph : R \to
  A[x]$ be a homomorphism such that $\ph(t_i) \in A$ for all $i$ and $\ph(x_j)
  = x_j$ for all $j$.  Equip $A[x]$ with the monomial ordering $<_x$ induced from
 $\ll$ as in Definition \ref{<<to<x}.  Then $\ph(G)$ is a monic \Gb{} with respect to $<_x$.
\end{lemma}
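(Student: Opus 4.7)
The plan is to factor the map $R = \ZZ[b,x] \to A[x]$ through $\ZZ[b][x]$, decomposing the problem into two moves, each handled by a preceding lemma: first demote the $b_i$ from variables to coefficients via Lemma \ref{lemma:ordering-b}, then base change the coefficient ring from $\ZZ[b]$ to $A$ along the restriction of $\ph$ via Lemma \ref{lemma:basechange}.

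For the first step, let $J \subseteq \ZZ[b,x]$ denote the ideal generated by $G$. By hypothesis, $G$ is a monic \Gb{} of $J$ whose lead monomials involve no $b_i$, which is exactly the hypothesis of Lemma \ref{lemma:ordering-b}. That lemma then yields that $G$ remains a monic \Gb{} of the ideal it generates in $\ZZ[b][x]$, with the monomial ordering on $[x]$ induced from the original ordering on $[b,x]$. Crucially, this induced ordering on $[x]$ is precisely the one stipulated for $A[x]$ in the statement, so the orderings will align for the next step.

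For the second step, observe that $\ph$ restricts to a ring homomorphism $\psi : \ZZ[b] \to A$ defined by $\psi(b_i) = \ph(b_i)$, and since $\ph(x_j) = x_j$, the map $\ph$ is simply the coefficientwise extension of $\psi$ from $\ZZ[b][x]$ to $A[x]$. Applying Lemma \ref{lemma:basechange} along $\psi$ to the monic \Gb{} $G \subseteq \ZZ[b][x]$ then gives that $\psi(G) = \ph(G)$ is a monic \Gb{} of the extended ideal in $A[x]$, which is the desired conclusion.

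I expect no serious obstacle beyond checking that the pieces fit together: the substantive work already lives in the earlier lemmas, and the only things to watch are that the induced monomial orderings agree at each stage and that the lead coefficients of elements of $G$, being $1 \in \ZZ$, remain $1 \in A$ after applying $\psi$, so monicity is preserved throughout.
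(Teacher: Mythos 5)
Your proof is correct and follows essentially the same route as the paper: it first handles the generic case $A = \ZZ[b]$ by demoting the $b_i$ to coefficients via Lemma \ref{lemma:ordering-b}, then transfers the conclusion to an arbitrary $A$ by base change along $\ZZ[b] \to A$ using Lemma \ref{lemma:basechange}. The only difference is that you spell out the factorization and the matching of induced orderings a bit more explicitly than the paper does.
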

\begin{proof}
  In the case where $A=\ZZ[t]$ and $\ph:\ZZ[t,x] \to \ZZ[t][x]$ is the inverse of the map $\Psi$ in Proposition \ref{lemma:ordering-b}, the result is given by Proposition \ref{lemma:ordering-b}.  The general case follows because monic \Gbs{} are stable under base change, by Lemma \ref{lemma:basechange}.
\end{proof}

\subsection*{Composition of \Gbs}\label{section:composition}

The next result shows how to compose a pair of \Gbs{} in a tower of polynomial rings $A[t,x]$ equipped with a monomial ordering $\ll$ on $[t,x]$ and an induced monomial ordering $<_x$ given by Definition \ref{<<to<x}.

Let $I \subseteq A[t]$ be an ideal, and let $B = A[t]/I$.  A monic polynomial $g$
of $B[x]$ can be lifted (noncanonically) to a monic polynomial $\tilde g \in
A[t,x]$ by lifting its coefficients to $A[t]$ in such a way that the property of a coefficient of a given monomial in $[x]$ being equal to $1$ or to $0$ is preserved.  In the process, no new monomials in $x$ are
introduced, and $\lm_x(\tilde g) = \lm_x(g)$. If $J$ is an ideal of $B[x]$, let $\tilde J$ denote its preimage in $A[t,x]$. 

\begin{proposition}\label{composite}
Suppose that $B = A[t]/I$ is a monic $A$-algebra, and that $B[x]/J$ is a monic $B$-algebra. 
Let $G' \subseteq A[t]$ and $G'' \subseteq B[x]$ be monic \Gbs{} for $I$ and $J$ respectively.
Let $A[t,x]$ be equipped with the block ordering $<_{x,t}$.
Then
\begin{enumerate}
\item $A[t,x]/\tilde{J}$ is a monic $A$-algebra, and
\item $G := G' \cup \{ \tilde{h} \mid h \in G'' \}$ is a \Gb{} of $\tilde{J}$ with respect to $<_{x,t}$.
  Moreover, $\l_{x,t}(\tilde{J})$ is generated by the monomials appearing as lead monomials of elements of $I$ or of 
  $J$.
\end{enumerate}
\end{proposition}

\begin{proof}
  The set $G$ consists of monic polynomials. Together with assertion 1, this observation is sufficient to validate assertion 2. Let $h$ be a nonzero element of $\tilde{J}$.  Our goal is to show that $\lm_{x,t}(h)$ is
  divisible by a lead term of $G$.  There are two cases.

  If $\lc_{x}(h) \in I$, then $\lm_t(\lc_x(h))$ is divisible by the lead term of an element of $G'$ because $G'$ is a \Gb{} for $I$.
  
  If $\lc_x(h) \notin I$, then $\lm_{x,t}(h) = \lm_t(\bar h)$, where $\bar h$ is the image of $h$ in $J$, and $\lm_t(\bar h)$ is divisible by the lead term of an element of $G''$ because $\bar h \in J$.

  In both cases, $\lm_{x,t}(g) = \lm_t(\lc_x(h)) \lm_x(h)$ is divisible by the lead term of the corresponding element of $G$.
\end{proof}

\section{Universal factorization}\label{section:unifac}

Now we apply the results of section \ref{section:arb} to study the \Gb{} of the
ideal defining the scheme parametrizing factorizations of a polynomial.

Let $A$ be a commutative ring.  For polynomials $g$ and $h$ in $A[x]$ we let
$\langle g \equiv h \rangle$ denote the ideal of $A$ generated by the
coefficients of $g-h$.  We introduce $f \rem g$ as notation for the remainder
and $f \quo g$ as notation for the quotient when $f$ is divided by a monic
polynomial $g$.

\begin{definition}\label{PFdef}
  Let $f(x) \in A[x]$ be a monic polynomial of degree $n$.  Given $r \in \NN$
  and $n_1, \dots, n_r \in \NN$, fix the polynomial ring 
  \[R=A[c_{ij} \vert 1 \leq i \leq r,1 \leq j \leq n_i],\]
   let $g_i = x^{n_i} + c_{i1}x^{n_i-1} + \cdots + c_{in_i} \in
  R[x]$ for $1 \leq i \leq r$, and  define the ideal $I = \langle f \rem g_1 g_2 \dots
  g_r \equiv 0 \rangle \subseteq R$.  Set  $\PF_{n_1,\dots, n_r}(f) := R/I$.
  
  \noindent In particular, if $r=1$, and $g_1(x)=x^m+c_1x^{m-1}+\ldots+c_m$ then we have
  $$PF_m(f)=A[c_1,\ldots,c_m]/\langle f\%g\equiv 0 \rangle$$
  and, if  $n_1=n_2=\dots=n_r=1$, we write $\PF_{1,\dots, 1}(f)=\PF_{r:1}(f)$. 
 \end{definition}

Definition \ref{PFdef} will usually be applied in the case where $n \geq n_1 + \cdots +n_r$, for otherwise $I$ is the unit ideal and $\PF_{n_1,\dots, n_r}(f)=0$. We now show that the algebra $\PF_{n_1,\dots, n_r}(f)$ is graded with respect to an appropriate assignment of degrees to the variables of $R$ as well as to the coefficients of $f(x)$.

\begin{lemma}\label{lemma:homogeneous}
    If $A$ is graded and if the coefficients of $f(x)=x^n+b_1x^{n-1}+\ldots+b_{n-1}x+b_n$ are homogeneous, with $\deg(b_i)=i$,
then the defining ideal of $\PF_{n_1,\dots, n_r}(f)$, i.e., $I = \langle f \rem g_1 g_2 \dots
  g_r \equiv 0 \rangle \subseteq R$ of Definition \ref{PFdef}, is a homogeneous ideal of $R$, where $R$ is graded by setting $\deg(c_{ij}) = j$.
\end{lemma}
\begin{proof}
  Set $g(x)=g_1(x)g_2(x)\ldots g_r(x)$ and $m=\sum_{i=1}^r n_i$. Notice that
  each $g_i(x)$, and consequently also $g(x)$, is homogeneous under the
  assignment $\deg(c_{ij})=j$ and $\deg(x)=1$. The ideal $I$ is generated by
  the coefficients of $f(x) \rem g(x)$, and this remainder is in turn
  homogeneous of degree $n-m$.  Thus the coefficients of
  $x^{m-1},x^{m-2},\dots,x,1$ in $f(x) \rem g(x)$ are homogeneous and have
  complementary degrees $n-m+1, n-m+2,\dots, n-1, n$.
\end{proof}

\begin{remark}\label{PFfunctor}
  The $A$-algebra $\PF_{n_1,\dots,n_r}(f)$ represents the functor that assigns
  to each $A$-algebra $B$ the set of factorizations of $f$ in $B[x]$ with monic
  factors of degrees $n_1, n_2, \dots, n_r, n-\sum_i n_i$, and thus is the
  universal $A$-algebra supporting a factorization of that form. 
  To see this, let $n_{r+1}=n-\sum_{i=1}^r n_i$, define a larger polynomial ring
  $R'=A[c_{ij}, 1 \leq i \leq r+1,1 \leq j
    \leq n_i]$ and let $g_i = x^{n_i} + c_{i1}x^{n_i-1} + \cdots + c_{in_i} \in
  R'[x]$ for $1 \leq i \leq r+1$.  Consider the ideal $$I' = \langle f \equiv g_1 g_2 \dots
  g_r g_{r+1} \rangle \subseteq R'$$ which encodes the splitting of the polynomial $f(x)$ with monic factors of degree $n_1, n_2,\ldots,n_{r+1}$. Set $\Split_{n_1, n_2,\ldots,n_{r+1}}(f)=R'/I'$. Then the map 
  $$ \Split_{n_1,\dots,n_{r+1}}(f) \to \PF_{n_1,\dots,n_{r-1}}(f)$$ that sends $c_{r+1,i}$ to the coefficient of  $x^{n_{r+1}-i}$ in $f(x)\quo g_1(x)g_2(x)\cdots g_r(x)$, viewed as a polynomial in $R[x]$, is an isomorphism.
 \end{remark}

\begin{remark}
The algebra $\PF_{r:1}(f)$ is isomorphic to the {\em splitting algebra} of $f(x)$ as defined in 
\cite[Section 1.2]{laksov-MR2130463} and \cite[Definition1.2]{laksov-MR2728150}.  When $A$ is a field and $r+1=n$, every residue field of $\PF_{r:1}(f)$  is isomorphic to a splitting field of $f$. Furthermore, in that case, the defining ideal of $\Split_{r+1:1}(f)$ is given by
  $$I'=\langle b_i-s_i(c_{11},\ldots,c_{n,1}), 1\leq i\leq n \rangle,$$ where $s_i$ denotes the $i^{th}$ symmetric polynomial on  variables $c_{11},\ldots,c_{n,1}$. A \Gb{} for this ideal $I'$ is given in \cite[Theorem 1.2.7]{MR1255980}. The main result of this paper generalizes this theorem.
\end{remark}


Our goal is to show that $\PF_{n_1,\dots,n_r}(f)$ is a monic $A$-algebra and
to identify the lead terms of the \Gb{} of the corresponding ideal with
respect to a suitable monomial ordering.  We shall achieve this goal by performing induction on the number of factors, $r$. We begin with the case where $r=1$.

\begin{definition}
\label{defM(g,f)}
Let $f(x), g(x)\in A[x]$ be monic polynomials of degree $n$ and $m$ respectively. For a non-negative integer $r$, let $F_r$ denote the free $A$-submodule of $A[x]$ equipped with basis $\{x^r, x^{r-1}, \dots, x, 1\}$.  Consider the map of free $A$-modules  $$\phi :F_{n-m}\oplus F_0 \to F_n \mbox{ defined by }(p,c) \mapsto gp+fc.$$ Let $M(g,f)$ be the matrix representing this map with respect to the given bases. 
\end{definition}
\noindent From the definition we see  $M(g,f)$ is an $(n+1) \times (n-m+2)$ matrix with entries in $A$. We shall refer to the top $n-m+1$ rows of $M(f,g)$ as the top part of the matrix and to the bottom $m$ rows as the bottom part of the matrix. 

\begin{example}\label{example:matrixform}
  If $n = 5$ and $m = 2$, and 
  \begin{eqnarray*}
  f(x) &=& x^5+b_1x^4+b_2x^3+b_3x^2+b_4x+b_5\\
  g(x) &=& x^2+c_1x+c_2,
  \end{eqnarray*}
  then 
   the matrix $M=M(g,f)$ is given by
  \[M = \begin{pmatrix}
               1 &   0 &   0 & 0   &1  \\
             c_1 &   1 &   0 & 0   &b_1\\
             c_2 & c_1 &   1 & 0   &b_2\\
               0 & c_2 & c_1 & 1   &b_3\\
               0 &   0 & c_2 & c_1 &b_4\\
               0 &   0 &   0 & c_2 &b_5\\
        \end{pmatrix}.
  \]
  Notice the location of the $1$'s on the diagonal in the top part of the matrix. These entries can be used as pivots with row and column operations to obtain an equivalent matrix of the form 
  \[M' = \begin{pmatrix}
               1 &   0 &   0 & 0   &0  \\
             0 &   1 &   0 & 0   &0\\
             0 & 0 &   1 & 0   &0\\
               0 & 0 & 0 & 1   &0\\
               0 &   0 & 0 & 0 &\alpha\\
               0 &   0 &   0 & 0 &\beta \\
        \end{pmatrix}.
  \]
  We prove in Proposition \ref{minors} that one can get from this the identity $f\%g=\alpha x+\beta$.
   \end{example}

\begin{definition}
  For $i = (i_0, i_1, \dots, i_{n-m+1})$ with $n \geq i_0 > i_1 > \dots >
  i_{n-m+1} \geq 0$, let $F_i$ denote the free $A$-submodule of $F_{n}$ with
  basis $\{x^{i_0}, x^{i_1}, \dots, x^{i_{n-m+1}}\}$, and let $\pi_i : F_{n}
  \to F_i$ denote the projection map.  Using the notation from Definition \ref{defM(g,f)}, 
  let $M_i = \pi_i \circ M(g,f)$. Then $M_i$ is
  a square submatrix of $M$, and its determinant is a maximal $(n-m+2)
  \times (n-m+2)$ minor of $M(g,f)$.
\end{definition}

We are now ready to relate the maximal minors of $M(g,f)$ to the coefficients of the remainder polynomial $f\%g$.

\begin{proposition}\label{minors}
  The ideal generated by the maximal minors of the matrix $M(g,f)$ is $\langle f
  \rem g \equiv 0 \rangle$.
\end{proposition}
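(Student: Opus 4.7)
The plan is to perform elementary column operations on $M$ (which preserve the ideal of maximal minors), and then read off the generators $r_0, \ldots, r_{m-1}$ of $\langle f \rem g \equiv 0 \rangle$ directly from the transformed matrix. Concretely, I would write $f = qg + r$ with $q = q_0 x^{n-m} + q_1 x^{n-m-1} + \cdots + q_{n-m}$ (all coefficients lying in $R$) and $r = f \rem g$ of degree less than $m$. The first $n - m + 1$ columns of $M$ are the coefficient vectors of $x^{n-m} g, x^{n-m-1} g, \ldots, g$, so subtracting $q_j$ times the $(j{+}1)$-st column from the last column, for $j = 0, \ldots, n-m$, converts the last column into the coefficient vector of $f - qg = r$. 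Call the resulting matrix $M'$; its ideal of maximal minors equals that of $M$.

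By construction the last column of $M'$ vanishes in the top $n - m + 1$ rows and has entries the coefficients of $r$ in the bottom $m$ rows (those indexed by $x^{m-1}, \ldots, x^0$). It therefore suffices to check that the ideal of maximal minors of $M'$ equals $\langle r_0, \ldots, r_{m-1} \rangle$.

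One inclusion is immediate: any maximal minor of $M'$ uses all $n - m + 2$ columns, hence includes the last column, and cofactor expansion along that column gives an $R$-linear combination of the $r_i$. For the reverse inclusion I would exhibit each $r_i$ as a concrete maximal minor: take the $(n-m+2) \times (n-m+2)$ submatrix consisting of the top $n - m + 1$ rows (indexed by $x^n, \ldots, x^m$) together with the single row indexed by $x^i$. Because $g$ is monic of degree $m$, the top-left $(n-m+1) \times (n-m+1)$ block is lower triangular with $1$'s on the diagonal, and because $\deg r < m$ the top part of the last column is zero. The block-triangular determinant is then exactly $r_i$.

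The main obstacle is essentially bookkeeping: one has to verify the unit lower-triangular structure of the top-left $(n-m+1) \times (n-m+1)$ block of $[x^{n-m}g \mid \cdots \mid g]$ and the vanishing of the top $n - m + 1$ entries of the reduced last column. Both follow immediately from $g$ being monic of degree $m$ and $\deg r < m$, so the argument is short once the column reduction has been laid out.
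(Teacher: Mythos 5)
Your proof is correct and starts from the same central idea as the paper's: column operations carry out the Euclidean division, leaving $r = f \rem g$ in the last column with zeroes in the top $n-m+1$ entries. From there you diverge. The paper continues by row-reducing against the unit lower-triangular top-left block to reach a block-diagonal matrix, and then classifies all maximal minors at once: those using all $n-m+1$ top rows plus one bottom row give $\pm r_i$, while any minor using two bottom rows vanishes because, after the row reduction, those rows are proportional. You skip the row operations entirely and instead argue the two inclusions separately: cofactor expansion along the partially-zeroed last column shows that every maximal minor of $M'$ is an $R$-linear combination of $r_0, \ldots, r_{m-1}$, and the block-lower-triangular submatrix on the top $n-m+1$ rows plus a single bottom row exhibits each $r_i$ directly as a minor. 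This bypasses both the row-reduction step and the proportional-rows observation, which is a modest simplification; the paper's version has the advantage of producing a fully reduced normal form for $M$, which makes the classification of all minors visible at a glance.
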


\begin{proof}
  The ideal is unaffected by row and column operations.  The last column
  contains the coefficients of $f$ and the other columns contain the coefficients of the polynomials $x^i g$ for $n-m \ge i \ge 0$, so dividing $f$ by $g$ amounts to the unique column operations that
  replace the top $n-m+1$ entries of the last column by zeroes, leaving the coefficients of the
  remainder $f \rem g$ in the bottom part of that column.  Row operations can be then used to replace the
  entries below the diagonal in the other columns by zeroes, without affecting
  the entries in the last column.  The maximal minors that involve one row in
  the bottom part and all the rows in the top part are the coefficients of $f
  \rem g$.  The other minors vanish, because they involve two rows in the bottom
  part, whose entries outside the last column are zero.
\end{proof}

\def\matentry#1#2#3{\langle#2|#1|#3\rangle}

Equip the ring $R$ from Definition \ref{PFdef} with the {\em graded reverse lexicographic ordering} $<_{\rm{grevlex}}$  with the
variables listed in the order $c_{1} > \dots > c_{m}$ and with $\deg(c_{i}) =
i$.  That means that $$c^\alpha = c_1^{\alpha_1} \dots c_m^{\alpha_m} >
c^\gamma = c_1^{\gamma_1} \ldots c_m^{\gamma_m}$$ if and only if one of the following holds: $\sum_i i
\alpha_i > \sum_i i \gamma_i$, or $\sum_i i \alpha_i = \sum_i i \gamma_i$ and
$\alpha_\ell < \gamma_\ell, \alpha_{\l+1} = \gamma_{\l+1}, \dots,
\alpha_m = \gamma_m$, for some $\l$.

\begin{lemma}\label{leadtermlemma}
Let $f = x^n+ b_1 x^{n-1} + \dots + b_n, g(x)=x^m+c_1x^{m-1}+\dots+c_m$ and $I=\langle f\%g\equiv 0\rangle$.
The ideal $\l_{<_{\rm{grevlex}}}(I)$ contains each monomial $c^\beta$ with $\beta_1 + \dots +
  \beta_m = n-m+1$, i.e., $\l_{<_{\rm{grevlex}}}(I) \supseteq \langle c_{1}, \dots, c_{m}
  \rangle^{n-m+1}$.
\end{lemma}

\begin{proof}
  Applying Proposition \ref{minors}, we see that $I$ is generated by the maximal
  minors of the matrix $M = M[g,f]$ constructed above.  The ideal is unaffected
  by row and column operations, so we permute the columns cyclically so the
  last one becomes the first.  The resulting matrix has the following form if
  $n-m=4$.
  \[\begin{pmatrix}
             1  &  1 &   0 &   0&   0&   0  \\
             b_1&c_1 &   1 &   0&   0&   0  \\
             b_2&c_2 & c_1 &   1&   0&   0  \\
             b_3&c_3 & c_2 & c_1&   1&   0  \\
             b_4&c_4 & c_3 & c_2& c_1&   1  \\
             b_5&c_5 & c_4 & c_3& c_2& c_1  \\
             b_6&c_6 & c_5 & c_4& c_3& c_2  \\
             b_7&c_7 & c_6 & c_5& c_4& c_3  \\
   \vdots&\vdots&\vdots&\vdots&\vdots&\vdots\\
         b_{n-1}&  0 &   0 &   0& c_m& c_{m-1}  \\
             b_n&  0 &   0 &   0&   0& c_m  \\
        \end{pmatrix}.
  \]
  The number of columns is $n-m+2$, so from the form of the matrix one sees
  that any monomial $c^\beta$ of the desired form occurs as the product of
  diagonal elements in a unique maximal submatrix that includes the first row.
  For example, the monomial $c_4 c_5^2 c_7 c_9$ arises from the diagonal of the
  following submatrix, with its diagonal elements parenthesized for emphasis.
  \[\begin{pmatrix}
             (1)  &  1 &   0 &   0&   0&   0  \\
             b_4&(c_4) & c_3 & c_2& c_1&   1  \\
             b_6&c_6 & (c_5) & c_4& c_3& c_2  \\
             b_7&c_7 & c_6 & (c_5)& c_4& c_3  \\
          b_{10}&c_{10} & c_9 & c_8& (c_7)& c_6  \\
          b_{13}&c_{13} & c_{12} & c_{11}& c_{10}& (c_9)  \\
        \end{pmatrix}.
  \]
  We claim that $c^\beta$ is the lead term of the determinant of that
  submatrix.

  If it were true that $\deg b_i = i$, then all the terms of the determinant
  would have the same degree.  But $\deg b_i = 0$, because $b_i$ is a
  scalar, so the terms of the determinant that involve one of them have degree
  less than $\deg c^\beta$.  Hence we may ignore them when looking for the leading term with respect to $<_{\rm{grevlex}}$.  A convenient way to do
  so is to remove the first row and the first column.  Our example matrix would
  then look like this:
  \[\begin{pmatrix}
          (c_4) & c_3 & c_2& c_1&   1  \\
          c_6 & (c_5) & c_4& c_3& c_2  \\
          c_7 & c_6 & (c_5)& c_4& c_3  \\
          c_{10} & c_9 & c_8& (c_7)& c_6  \\
          c_{13} & c_{12} & c_{11}& c_{10}& (c_9)  \\
   \end{pmatrix}.
  \]
  Consider a nonzero term of the determinant arising from a non-identity
  permutation of the rows: it involves choosing one entry from each row and one
  entry from each column.  In our example matrix, it may involve the bracketed
  entries in the following matrix.
  \[\begin{pmatrix}
          c_4 & [c_3] & c_2& c_1&   1  \\
          [c_6] & c_5 & c_4& c_3& c_2  \\
          c_7 & c_6 & c_5& [c_4]& c_3  \\
          c_{10} & c_9 & [c_8]& c_7& c_6  \\
          c_{13} & c_{12} & c_{11}& c_{10}& [c_9]  \\
   \end{pmatrix}.
  \]

  All terms of the determinant have the same degree, so for comparing two of
  them in the graded reverse lexicographic ordering, one may simply say that
  $$c^\alpha = c_1^{\alpha_1} \dots c_m^{\alpha_m} > c^\gamma = c_1^{\gamma_1} \ldots c_m^{\gamma_m}$$
  if and only if for some $\l$
  $$\alpha_\ell < \gamma_\ell, \alpha_{\l+1} = \gamma_{\l+1}, \dots, \alpha_m = \gamma_m.$$ 

  Now focus on the bottom-most row whose chosen entry is off the diagonal.  It
  must be to the left of the diagonal, because the columns to the right are
  already occupied by lower choices.  In our example, it's the second row from
  the bottom, where we see that a factor $c_7$ on the diagonal has given way to
  a factor $c_8$ to the left of the diagonal; the reason that $c_4 c_5^2 c_7
  c_9 > c_3 c_6 c_4 c_8 c_9 $ is that in the passage from the first monomial to
  the second, the exponent of $c_8$ has increased while the exponents on $c_9,
  c_{10}, \dots$ have not decreased.

  The general observation that gives the result is the following.  Adopt the
  convention that $c_0 = 1$.  Consider two monomials $u = c_{a_0} c_{a_1} \dots
  c_{a_{n-m}}$ and $v = c_{b_0} c_{b_1} \dots c_{b_{n-m}}$.  Assume: (1) the
  indices of the monomial $u$ are non-decreasing: $a_0 \leq a_1 \leq \dots \leq
  a_{n-m}$; and (2) $a_s < b_s, a_{s+1} = b_{s+1}, \dots, a_m = b_m$ for some
  $s$.  Then, in the reverse lexicographic order, we have $u > v$.

  The observation holds because in the passage from $u$ to $v$ an instance of
  the variable $c_{a_s}$ is replaced by $c_{b_s}$, and thus the exponent of
  $c_{b_s}$ increases, while the exponents of $c_{b_s+1},c_{b_s+2},\dots$ do
  not decrease.  Indeed, the only exponents that might decrease are those of
  $c_{a_0}, c_{a_1}, \dots, c_{a_s}$, but $a_0 \le a_1 \le \dots \le a_s <
  b_s$, so they are not in a position to interfere.
\end{proof}

If $N$ is a graded $A$-module such that for each $i$, the $A$-module $N_i$ is
free and finitely generated, then the {\em Hilbert series of $N$ over $A$} is
the power series 
\[H_A(N,T) := \sum_i \rank_A N_i \cdot T^i.
\]

\begin{lemma}\label{countmonoms}
  For all $n \ge m \ge 0$,
  $$H_A(R / \langle c_{1}, \dots, c_{m} \rangle^{n-m+1},T) =
  \frac{(1-T^n)\cdot{\dots}\cdot(1-T^{n-m+1})}{(1-T^{m})\cdot{\dots}\cdot(1-T^{1})}$$
\end{lemma}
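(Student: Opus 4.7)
The plan is to prove the formula by induction on $m$, using a simple recursion obtained by splitting monomials according to divisibility by $c_m$.

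First I would identify an explicit $A$-basis for the quotient. The ideal $\langle c_1,\dots,c_m\rangle^{n-m+1}$ is a monomial ideal whose $A$-linear span consists of all monomials $c^\alpha = c_1^{\alpha_1}\cdots c_m^{\alpha_m}$ with exponent sum $|\alpha| := \alpha_1 + \cdots + \alpha_m \geq n-m+1$. Consequently the monomials with $|\alpha| \leq n-m$ form a free $A$-basis of $S/\langle c_1,\dots,c_m\rangle^{n-m+1}$, and if we write $P(n,m)$ for its Hilbert series we obtain
$$P(n,m) \;=\; \sum_{|\alpha|\leq n-m} T^{\alpha_1 + 2\alpha_2 + \cdots + m\alpha_m}.$$

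Next I would derive a recursion. Partition the basis into monomials not involving $c_m$ and monomials divisible by $c_m$. The former are monomials $c_1^{\alpha_1}\cdots c_{m-1}^{\alpha_{m-1}}$ with exponent sum at most $n-m = (n-1)-(m-1)$, so their contribution to the Hilbert series is $P(n-1,m-1)$. The latter, after dividing by $c_m$, biject with monomials in $c_1,\dots,c_m$ of exponent sum at most $n-m-1 = (n-1)-m$, and the extra factor $c_m$ contributes $T^m$; their contribution is therefore $T^m P(n-1,m)$. This yields
$$P(n,m) \;=\; P(n-1,m-1) + T^m\,P(n-1,m) \qquad (n>m>0),$$
together with the trivial base cases $P(n,0)=1$ (the quotient is $A$) and $P(m,m)=1$ (the ideal is $\langle c_1,\dots,c_m\rangle$, killing everything of positive degree).

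Finally I would verify that the right-hand side of the claimed identity, namely the $T$-Gaussian binomial coefficient $\binom{n}{m}_T = \prod_{i=1}^m \frac{1-T^{n-i+1}}{1-T^i}$, satisfies the same recursion and base cases. The needed identity
$$\binom{n}{m}_T \;=\; \binom{n-1}{m-1}_T + T^m \binom{n-1}{m}_T$$
is a standard $T$-Pascal rule and reduces to the elementary factorization $(1-T^m) + T^m(1-T^{n-m}) = 1 - T^n$ after clearing denominators. Induction on $n$ (with $m$ allowed to vary) then finishes the proof. The only non-routine point is arranging the bookkeeping in the splitting step so that $n-m$ shifts correctly to $(n-1)-(m-1)$ and $(n-1)-m$ in the two subcases; once that is set up, everything else is a formal calculation.
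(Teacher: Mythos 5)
Your proposal is correct and follows essentially the same route as the paper: both split the monomial basis $\{c^\alpha : |\alpha|\le n-m\}$ according to divisibility by $c_m$ to get the recursion $P(n,m)=P(n-1,m-1)+T^m P(n-1,m)$, check that the Gaussian-binomial right-hand side satisfies the same recursion (the $T$-Pascal identity), and conclude by induction. Your explicit verification of the $T$-Pascal rule and the extra base case $P(m,m)=1$ are fine refinements of the same argument, not a different approach.
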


\begin{proof}
  Introduce notation $H_{n,m}(T)$ for the left side of the identity claimed above and
  $h_{n,m}(T)$ for the rational function on the  right side.  The Hilbert function $H_{n,m}(T)$ tallies
  the monomials $c^\alpha$ with $\sum_i \alpha_i \le n-m$ by degree.  A
  monomial is either divisible by $c_m$ or not, which gives the recursive formula $H_{n,m}(T) = T^m
  H_{n-1,m}(T) + H_{n-1,m-1}(T)$.  The analogous identity $h_{n,m}(T) = T^m
  h_{n-1,m}(T) + h_{n-1,m-1}(T)$ can be verified by a direct computation, so the lemma can be established by
  induction, based on the cases where $m=0$, where both sides equal $1$.
\end{proof}

\begin{remark}\label{rankremark}
  The lemma implies that the right side of the equation is a polynomial, which
  is analogous to the statement that the binomial coefficient $\binom n m$ is
  an integer, the proof of which involves observing that a subset of
  $\{1,\dots,m\}$ either contains $m$ or not.  Factoring out all the factors of
  $1-T$ and then replacing $T$ by $1$ yields the equation $H_{n,m}(1) = \binom n m$,
  which is a standard fact about the number of monomials $c^\alpha$ with
  $\sum_i \alpha_i \le n-m$.
\end{remark}

\begin{proposition}\label{proposition:meatyprop}
  Let $f(x) = x^{n} + b_1 x^{n-1} + \dots + b_{n-1} x + b_{n} \in A[x]$ be a
  monic polynomial of degree $n$. Take $m \in \NN$ with $m \le n$, let $g(x)=x^{m} + c_1 x^{m-1} + \dots + c_{m-1} x + c_{m}$ and form as in definition \ref{PFdef}
  \[\PF_m(f)=A[c_i \vert 1\leq i \leq m]/\langle f\%g \equiv 0\rangle.\]
    Equip $R=A[c_i \vert 1\leq i \leq m]$ with the graded reverse lexicographic ordering  with
  the variables listed in the order $c_{1} > \dots > c_{m}$ and with
  $\deg(c_{i}) = i$. Set $I=\langle f\%g \equiv 0\rangle$ Then:
  \begin{enumerate}
    \item \label{meatyprop:leadterms} $\l(I) = \langle c_{1}, \dots, c_{m} \rangle^{n-m+1}$;
    \item \label{meatyprop:monic} $I$ is a monic ideal;
    \item \label{meatyprop:rank} $\PF_m(f)$, as an $A-$module, is free, with rank $\binom n m$; and
    \item \label{meatyprop:inject} the map $A \to \PF_m(f)$ is injective.
  \end{enumerate}
\end{proposition}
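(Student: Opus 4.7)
Items (2), (3), and (4) all follow formally from (1). If $\l(J) = \langle c_{11}, \dots, c_{1m}\rangle^{n-m+1}$, then $J$ is monic because its lead ideal is generated by monomials, giving (2). Definition \ref{definition:redI} then exhibits $\PF_m(f) = S/J$ as the free $A$-module on the monomials $c^\alpha$ with $\alpha_1 + \cdots + \alpha_m \le n-m$, of which there are $\binom{n}{m}$ by Remark \ref{rankremark}, giving (3); item (4) is immediate since $1$ lies among the basis elements.

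The real work is (1); the inclusion $\supseteq$ is Lemma \ref{leadtermlemma}, so I need the reverse. The plan is to reduce to the universal setting $A_0 := \ZZ[b_1, \dots, b_n]$ via Lemma \ref{lemma:basechange} and work in the flattened polynomial ring $R := \ZZ[b_1,\dots,b_n,c_{11},\dots,c_{1m}]$ equipped with the block ordering $[c] \gg [b]$ from Definition \ref{prodorddef}, then transfer back to $A_0[c]$ via Lemma \ref{lemma:ordering-b} and on to general $A$ via Lemma \ref{lemma:specialization}. Under this setup, the natural grading $\deg b_i = \deg c_{1i} = i$ makes $R$ graded, and by Lemma \ref{lemma:gbfields} it suffices to prove $\l(J_F) = \langle c_{11}, \dots, c_{1m}\rangle^{n-m+1}$ in $F[b,c]$ for every field $F$, taking as test set the explicit monic maximal minors of $M$ exhibited in the proof of Lemma \ref{leadtermlemma}. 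Fix such an $F$. On the one hand, Lemma \ref{leadtermlemma} together with the standard-monomial basis theorem and Lemma \ref{countmonoms} give the upper bound
\[
H_F(F[b,c]/J_F, T) \le \frac{1}{\prod_{i=1}^n(1-T^i)}\prod_{j=0}^{m-1}\frac{1-T^{n-j}}{1-T^{j+1}}.
\]
On the other hand, $J_F$ is generated by the $m$ homogeneous coefficients $r_0, \dots, r_{m-1}$ of $f \rem g$, with $\deg r_j = n-j$; once these are shown to form a regular sequence in the Cohen--Macaulay ring $F[b,c]$, the exactness of the Koszul complex supplies the matching lower bound, and equality of the two bounds together with the containment of lead ideals forces $\l(J_F) = \langle c_{11}, \dots, c_{1m}\rangle^{n-m+1}$.

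The main obstacle is this regular-sequence verification, which reduces to a dimension count: since $F[b,c]$ has dimension $n+m$ and $J_F$ is generated by $m$ elements, it suffices to show $\dim F[b,c]/J_F = n$. I would obtain this from the projection $\Spec(F[b,c]/J_F) \to \Spec(F[b])$ via two observations: its fibers are finite, because a monic polynomial of degree $n$ over a field has only finitely many monic factors of degree $m$ (each being determined by a submultiset of its roots in an algebraic closure), giving $\dim \le n$; and it is surjective on $\bar{F}$-valued points, since such a factor can always be assembled from $m$ of the roots of $f$, giving $\dim \ge n$. A secondary technical point is the descent from the field-level statement back to $\ZZ[b,c]$ and then to arbitrary $A$; this is accomplished by applying Lemma \ref{lemma:gbfields} to the chosen minors, together with a Nakayama argument over the Jacobson ring $\ZZ[b,c]$ to identify the ideal they generate with $J_0$.
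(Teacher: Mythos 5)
Your proposal is correct and follows the same broad outline as the paper's proof---reduce to the universal coefficient ring $\ZZ[b]$, pass to a residue field $F$, compute Hilbert series, and match the containment of lead ideals established by Lemma \ref{leadtermlemma} against the enumeration in Lemma \ref{countmonoms}---but it diverges at the one genuinely substantive step: producing the complementary Hilbert-series identity for $F[b,c]/J_F$ itself. The paper does this by a direct elimination of variables: the low-order coefficients $b_{n-m+1},\dots,b_n$ of $f$ do not participate in the polynomial division $f \rem g$, so each appears exactly once, linearly and in isolation, in one of the $m$ defining relations; discarding them eliminates all $m$ relations and yields a graded isomorphism $F[b,c]/J_F \cong F[b_1,\dots,b_{n-m},c_1,\dots,c_m]$, whose Hilbert series is read off immediately. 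You instead establish that the $m$ coefficient relations form a regular sequence by a geometric dimension count on the morphism $\Spec F[b,c]/J_F \to \Spec F[b]$ (finite fibers for the upper bound, dominance via algebraically closed points for the lower bound, then unmixedness in the Cohen--Macaulay ring $F[b,c]$), and extract the Hilbert series from the resulting Koszul resolution. Both routes produce the identical series
\[
\frac{1}{(1-T^{n-m})\cdots(1-T)\cdot(1-T^m)\cdots(1-T)}
\]
and then finish the same way. The paper's elimination is more elementary and entirely self-contained; yours uses heavier commutative algebra but has the side benefit of recording explicitly that the generators of $J$ are a regular sequence in the universal case, which is a useful fact in its own right. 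One small remark: the closing Nakayama step over the Jacobson ring $\ZZ[b,c]$ is unnecessary. Lemma \ref{lemma:gbfields} already does all the descent you need---once the image of the chosen $G$ is a \Gb{} of $J_F$ for every residue field $F$ of $\ZZ$, the lemma gives that $G$ is a \Gb{} over $\ZZ$, and a \Gb{} contained in $J$ whose lead terms generate $\l(J)$ automatically generates $J$; from there Lemma \ref{lemma:specialization} transfers the statement to an arbitrary coefficient ring $A$. No separate argument is needed to identify the ideal generated by the minors.
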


\begin{proof}
  Using Remark \ref{rankremark} we note that claim \ref{meatyprop:leadterms} implies  the rest of the claims, so we focus on it.  Lemma \ref{leadtermlemma} shows
  that $\l(I) \supseteq \langle c_{1}, \dots, c_{m} \rangle^{n-m+1}$, which
  is half of it.  Choose $G \subset I$ with $\langle \l(G) \rangle = \langle
  c_{1}, \dots, c_{m} \rangle^{n-m+1}$. We shall prove that $G$ is a \Gb{} for $I$.

  By Lemma \ref{lemma:specialization} we may pass to the universal situation by
  replacing $R$ by the ring $\ZZ[b_1,\dots,b_{n},c_1,\dots,c_m]$, promoting the
  ring elements $b_{i}$ to variables.  To ensure that the lemma applies, as
  monomial ordering we take the block ordering $[b]\gg[c]$ that restricts to
  the graded reverse lexicographic ordering on $[b]$ and on $[c]$ (see
  Definition \ref{prodorddef}).  Assigning degrees $\deg(b_{j}) = j$ and $\deg(c_j)= j$ makes $I$ homogeneous, as shown in Lemma \ref{lemma:homogeneous}. This implies that the elements of $G$ can be chosen to be homogeneous as well.

  By Lemma \ref{lemma:gbfields} we may replace working in  the ring $\ZZ[b_1,\dots,b_{n},c_1,\dots,c_m]$ by working in  $\QQ[b_1,\dots,b_{n},c_1,\dots,c_m]$.  In view of this, we reset our notation to $R=\QQ[b_1,\dots,b_{n},c_1,\dots,c_m]$ and $I=\langle f\%g\equiv 0\rangle \subset R$. In this
  situation, a finitely generated graded $R$-module $N$ has a Hilbert series
  $H_\QQ(N,T) := \sum_i (\rank_\QQ N_i) T^i$ in the variable $T$.  We will show that
  $\l(I) = \langle \l(G) \rangle$ by comparing the Hilbert series of the two ideals.

 The low-order coefficients $b_{n-m+1}, b_{n-m+2}, \dots,b_{n-1}, b_{n}$ of $f(x)$ don't participate in the division $f(x)\rem g(x)$, so each one of the coefficients $b_i$ with $n-m+1\leq i\leq n$ appears as an isolated term in the equation of degree $i$ arising from  $f(x)\rem g(x)\equiv 0$ and in no other equations.  Eliminating those variables eliminates all the equations,
  too, so we see that $R/I \cong \QQ[b_{1},\dots,b_{n-m},c_1,\dots,c_m]$,
  allowing its Hilbert series to be determined: $$H_\QQ(R/I,T) = \frac{1}
  {(1-T^{n-m}) \dots (1-T^1)\cdot(1-T^m) \dots (1-T^1)}.$$ From the observation
  in Lemma \ref{prop:equivGB} that $\red_I R$ is the free graded
  $A$-submodule of $R$ generated by the monomials not contained in $\l(I)$, we
  see that $H_\QQ(R/I,T) = H_\QQ(R/\l(I),T)$.
We use Lemma \ref{lemma:gbfields} for the first and penultimate equalities and Lemma \ref{countmonoms} for the intermediate steps to see that
  \begin{align*}
    H_\QQ(R/\langle \l(G) \rangle,T) & = H_\QQ(\QQ[b_1,\dots,b_{n}],T) \cdot H_{\QQ[b_1,\dots,b_{n}]}(R/\langle \l(G) \rangle,T) \\
    & = \frac{1}{(T^n-1) \cdot \dots \cdot (T-1)} \cdot
      \frac{(1-T^n)\cdot{\dots}\cdot(1-T^{n-m+1})}{(1-T^{m})\cdot{\dots}\cdot(1-T^{1})} \\
    & = \frac{1} {(1-T^{n-m}) \dots (1-T^1)\cdot(1-T^m) \dots (1-T^1)} \\
    & = H_\QQ(R/I,T) = H_\QQ(R/\l(I),T)
  \end{align*}

  It follows, since the ideals
  $\langle\l(G)\rangle \subseteq \l(I)$ have the same Hilbert series over the
  field $\QQ$, that  they are equal, establishing claim
  \ref{meatyprop:leadterms}.
\end{proof}

\begin{corollary}\label{meatyprop:Gb} 
 With respect to the monomial ordering in Proposition \ref{proposition:meatyprop}, the set of maximal minors of the matrix $M(g,f)$  is a minimal \Gb{} of the ideal of relations of  $\PF_m(f)$.
\end{corollary}
\begin{proof}
Recalling Lemma \ref{leadtermlemma}, we see that  the proof of part \ref{meatyprop:leadterms} of Proposition \ref{proposition:meatyprop} given above implies this corollary.
\end{proof}

The \Gb{} of the corollary is often not auto-reduced.

\begin{remark}
  The ideal $I$, regarded as an ideal in the ring $\ZZ[b,c]$, provides an example of an ideal
  the size of whose \Gb{} depends on the choice of monomial ordering.  With the
  block ordering $[c]\gg[b]$ used in the proof of Proposition \ref{proposition:meatyprop}, the size of a minimal \Gb{} is the number of monomials in $m$-variables of degree $n-m+1$, or $\binom n
  {m-1}$.  With the blocks in the other order $[b]\gg[c]$, the leading terms
  are $b_{n-m+1}, b_{n-m+2}, \dots, b_{n-1}, b_{n}$, so the size of the \Gb{}
  is $m$. Both block orderings mentioned here are induced by the graded reverse lexicographic orderings on the blocks $[b]$ and $[c]$.
\end{remark}

Now we consider the general universal factorization.

\begin{theorem}\label{theorem:ourtheorem}
  Given a monic polynomial $f(x) \in A[x]$ of degree $n$ and integers $r, n_1, \dots, n_r \in \NN$ with $n_1 + \dots + n_r \le n$, let $\PF_{n_1,\dots,n_r}(f) :=R/I$, where $R=A[c_{ij} \vert 1 \leq i \leq r,1 \leq j \leq n_i], g_i = x^{n_i} + c_{i1}x^{n_i-1} + \cdots + c_{in_i} \in R[x]$ for $1 \leq i \leq r$, and  $I = \langle f \rem g_1 g_2 \dots
  g_r \equiv 0 \rangle \subseteq R$.
  Set $\deg(c_{ij}) = j$, equip each monoid
  $[c_{i1}, \dots, c_{in_i}]$ with the graded reverse lexicographic ordering $<_i$
  where $c_{i1} > \dots > c_{in_i}$, and finally equip the monoid $[c]$ of the
  polynomial ring $R$ with the block ordering denoted $<_{r,\ldots,1}$ where we compare according to $[c_{r1}, \dots, c_{rn_r}]\gg[c_{r-1,1}, \dots, c_{r-1,n_{r-1}}]\gg \ldots \gg[c_{11}, \dots, c_{1n_1}]$ (see Definition \ref{prodorddef}). 
  Then:
  \begin{enumerate}
    \item \label{ourtheorem:monic} $I$ is monic over $A$; 
    \item \label{ourtheorem:leadterms} the lead term ideal $\l(I)$ is
      \begin{align*}
        \l_{<_{r,\ldots,1}}( I)   = {}& \phantom{{}+{}} \langle  c_{11}, \dots, c_{1n_1} \rangle{}^{n -  n_1 + 1}   \\
         & + \langle c_{21}, \dots, c_{2n_2}\rangle{}^{n - n_1 - n_2 + 1} \\
         & + \dots \\
         & + \langle c_{r1}, \dots, c_{rn_r}\rangle{}^{n - n_1-n_2-\dots- n_r + 1}.
      \end{align*}
    \item \label{ourtheorem:basis} an explicit basis for $\PF_{n_1,\dots,n_r}(f)$ as an $A$-module is provided by the images of the monomials
      $c^\alpha$ satisfying the following inequalities:
      \begin{align*}
       \alpha_{11} + \dots + \alpha_{1n_1} & \le n - n_r- \dots -n_1 \\
       \alpha_{21} + \dots + \alpha_{2n_2} & \le n - n_r-  \ldots -n_{2} \\
       & \hskip 5pt \vdots \\
       \alpha_{r1} + \dots + \alpha_{rn_r} & \le n  - n_r;
      \end{align*}
 \item \label{ourtheorem:rank} $R/I$ is a free $A-$module of rank 
      $$\binom {n} {n_r} \cdot \binom {n - n_r} {n_{r-1}} \cdot 
      \binom {n - n_r - n_{r-1}} {n_{r-2}} \cdot \dots \cdot \binom {n - n_r - \dots - n_{2}} {n_1}.$$
  \end{enumerate}
\end{theorem}

\begin{proof}
  The idea of the proof is to construct $\PF_{n_1,\dots,n_r}(f)$ as a tower with $r$ stages, by letting
  $f$ acquire one factor at a time. We now describe the details. First we adjoin variables
  $c_{r1},\dots,c_{r,n_r}$ to $A$, set $g_r(x)=x^{n_r} + c_{r1} x^{n_r-1} + \dots + c_{r,n_r}$ and form the quotient $B_1=A[c_{r1},\dots,c_{r,n_r}]/\langle f \rem g_r(x) \equiv 0 \rangle$.  Next we
  adjoin the variables $c_{r-1,1},\dots,c_{r-1,n_{r-1}}$ to $B_{1}$, we set $g_{2}(x)=x^{n_{r-1}} + c_{r-1,1} x^{n_{r-1}-1} + \dots +c_{r-1,n_{r-1}}$ and we form the new
  quotient  $$B_{2}=B_1[c_{r-1,1},\dots,c_{r-1,n_{r-1}}]/\langle ( f \quo g_r(x)) \rem g_{r-1}(x) \equiv 0 \rangle$$  Inductively, denoting for $1\leq i\leq r$ $f_i(x)=f \quo g_r(x)g_{r-1}(x)\ldots g_{i-1}(x)\in B_{i-1}[x]$,  so that $\deg(f_i)=n-n_r-n_{r-1}-\ldots-n_{i-1}$, we construct the ring 
  $$B_i=B_{i-1}[c_{r-i+1,1},\dots,c_{r-i+1,n_{i}}]/\langle  f_i \rem g_{i} \equiv 0 \rangle.$$

  A monic \Gb{} for the ideal  $$\langle  f_i \rem g_{i} \equiv 0 \rangle \subset B_{i+1}[c_{r-i+1,1},\dots,c_{r-i+1,n_{i}}]$$ can be obtained as in the proof of Proposition \ref{proposition:meatyprop}. Thus using the graded reverse lexicographic order on the monoid $[c_{r-i+1,1},\dots,c_{r-i+1,n_{i}}]$, denoted $<_i$, yields 
  $$\l_{<_i}\left(\langle  f_i \rem g_{i} \equiv 0 \rangle\right)=\langle c_{r-i+1,1},\dots,c_{r-i+1,n_{i}}\rangle{}^{n - n_r - n_2 \dots - n_{i} + 1} .$$
  
   These \Gbs{} can be composed using Proposition \ref{composite}, finishing the proof of the statements \ref{ourtheorem:monic} and \ref{ourtheorem:leadterms}. Then statement \ref{ourtheorem:basis} follows from \ref{ourtheorem:leadterms} by means of Lemma \ref{prop:equivGB} and statement \ref{ourtheorem:rank} follows from \ref{ourtheorem:leadterms} by a standard combinatorial argument. 
\end{proof}

A direct corollary of the above proof gives a concrete method for finding a \Gb{} for our ideals of interest.

\begin{corollary}
An explicit \Gb{} for the defining ideal of $\PF_{n_1,\dots,n_r}(f)$ with respect to the block ordering in Theorem \ref{theorem:ourtheorem} can be obtained by composition, as in Proposition \ref{composite}, from the \Gbs{} of the ideals $\langle  f_i \rem g_{i} \equiv 0 \rangle$ defined in the proof of Theorem \ref{theorem:ourtheorem}. These \Gbs{} are given by the maximal minors of the matrices $M(f_i,g_i)$ (see Corollary \ref{meatyprop:Gb}).
\end{corollary}

\section{Monic homomorphisms}\label{section:further-formalities}

In this section we prepare for another proof of Proposition \ref{proposition:meatyprop}, to
be presented in Section \ref{section:another-proof}. We include this alternate proof in part
    because the techniques used may have other applications.

%

\begin{definition}\label{definition:monicmap}
  Let $S = A[y] = A[y_1,\dots,y_n]$ and $R = A[x] = A[x_1, \dots, x_m]$ be
  polynomial rings equipped with monomial orderings $<_x$ and $<_y$ respectively, and let $\ph : S \to R$ be
  a homomorphism of $A$-algebras.  We say that $\ph$ is {\em monic} if the
  following conditions are satisfied.
  \begin{enumerate}
    \item \label{item:monicmap:monic} for every $i$, the polynomial $\ph(y_i)$ is
      monic;
    \item \label{item:monicmap:injec} the homomorphism $\tph : [y] \to [x]$ of monoids defined by $
      \tph (y^\beta) = \l_{<_x} (\ph (y^\beta)) $ is injective; and
    \item \label{item:monicmap:ordpres} $\tph$ is order preserving.
  \end{enumerate}
\end{definition}

Observe that part \ref{item:monicmap:monic} can be used to prove that the function
$\tph$, defined in \ref{item:monicmap:injec}, is a homomorphism.  This follows since  the lead term of the product of two monic polynomials is the product of the two lead terms.

Injectivity of $\tph$ ensures that $\ph$ is injective.  The injectivity also
implies that $\l_{<_x}(\ph(g)) = \l_{<_x}(\ph(\l_{<_y}(g)))$ for any polynomial $g \in S$. 

\begin{remark}
  In Definition \ref{definition:monicmap}, suppose $S$ is not equipped with a monomial
  ordering $<_y$, so that part \ref{item:monicmap:ordpres} has no meaning, and suppose that the homomorphism  $\ph : S \to R$ satisfies parts \ref{item:monicmap:monic} and \ref{item:monicmap:injec} of Definition \ref{definition:monicmap}. Then the injectivity of $\tph$ ensures that the ordering on $[y]$ induced
  by the ordering on $[x]$ via $\tph$ is a monomial ordering.  Here by the ordering on $[y]$ induced by $\tph$ we mean the order $<_{\tph}$ defined by $y^\alpha<_{\tph} y^\beta$ if and only if $\tph(y^\alpha)<_x \tph(y^\beta)$. 
Endowing $[y]$ with that monomial ordering makes part \ref{item:monicmap:ordpres} of Definition \ref{definition:monicmap} true.
\end{remark}

\begin{remark}\label{remark:monic}
  The monic homomorphism that we will use in Section \ref{section:another-proof} is the
  following.  Let $S = A[y_1, \dots, y_r]$ and $R = A[x_1, \dots,
    x_r]$. Equip $R$ with a monomial ordering $<_x$ such that $x_1 > x_2 > \dots >
  x_r$.  Define $\ph : S \to R$ by letting $\ph(y_i)$ be the $i$-th elementary
  symmetric polynomial of the variables $x_1, \dots, x_r$.  Then $\ph$ is
  monic, because $\tph(y_i) = \l_x(\ph(y_i)) = x_1 x_2 \dots x_i$ is monic and
  the map $\tph : [y] \to [x]$ is injective.  In particular, if the monomial ordering on $R$
  is the graded reverse lexicographic order with all variables of degree $1$,
  and $x_1 > \dots > x_r$, then the proof of Proposition \ref{proposition:meatyprop} describes the induced monomial ordering on S; it is the graded reverse lexicographic
  order with $\deg y_i = i$ and $y_1 > y_2 > \dots > y_r$.
\end{remark}

\begin{lemma}\label{lemma:one}
 Let $S = A[y] = A[y_1,\dots,y_n]$ and $R = A[x] = A[x_1, \dots, x_m]$ be
  polynomial rings equipped with monomial orderings $<_x$ and $<_y$ respectively and let $\ph:S \to R$ be a monic homomorphism of $A$-algebras.  Suppose $I$ is a monic ideal
  of $R$ with $\red_I \ph(S) \subseteq \ph(S)$, and let $J=\ph^{-1}(I)$.  Then
  \begin{enumerate}
  \item \label{one:monoms} a monomial $y^\beta \in S$ is in $\l(J)$ if and only
    if $\tph(y^\beta)$ is in $\l(I)$;
  \item \label{one:monic} $J$ is monic.
  \end{enumerate}
\end{lemma}

\begin{proof}
  It will suffice to prove that a term $a y^\beta \in S$ with $a \ne 0$ is in
  $\l(J)$ if and only if $a \tph(y^\beta)$ is in $\l(I)$: part
  \ref{one:monoms} follows by setting $a=1$; and part \ref{one:monic}
  follows because $I$ is monic, and hence the condition $a \tph(y^\beta) \in
  \l(I)$ is independent of $a$.

  We prove the two implications in our claim separately.

  Assume $a y^\beta \in \l(J)$.  Locate a polynomial $a y^\beta - g \in J$ with
  lead term $a y^\beta $.  Applying $\ph$ yields $a \ph(y^\beta) - \ph(g) \in
  I$, and its lead term is $a \tph(y^\beta)$, showing $a \tph(y^\beta) \in
  \l(I)$.

  Alternatively, assume $a \tph(y^\beta) \in \l(I)$.  Then the lead term of $\ph(a y^\beta)$ is a lead term of $I$, and hence $\ph(a y^\beta) > \red_I \ph(a y^\beta) $.  By hypothesis, we can write $\red_I \ph(a y^\beta) = \ph(g)$ for some  (unique) $g \in S$.  Then $\ph(a y^\beta - g) = \ph(a y^\beta) - \red_I \ph(a y^\beta) \in I$, so $a y^\beta - g \in J$,  with $y^\beta$ bigger  than any monomial in $g$, showing $a y^\beta \in \l(J)$.
\end{proof}

We will apply the following result in Section \ref{section:another-proof}.  Although the
conditions on $L$ in the following proposition seem numerous, these conditions will all be straightforward
to verify.

\begin{proposition}\label{proposition:cond}
 Let $S = A[y] = A[y_1,\dots,y_n]$ and $R = A[x] = A[x_1, \dots, x_m]$ be
  polynomial rings equipped with monomial orderings $<_x$ and $<_y$ respectively.  Suppose that $\ph:S \to R$ is a monic $A$-algebra homomorphism, $I$ is a monic ideal of $R$, and set $J = \ph^{-1}(I)$. Assume that $S/J$ is a finitely  generated projective $A$-module of finite rank $N$ and  that $L$ is a finitely generated $A$-submodule of $R$ satisfying:
  \begin{enumerate}
    \item \label{cond:phsb} $L \subseteq \ph(S)$
    \item \label{cond:redI} $L \subseteq \red_I(R)$ 
    \item \label{cond:sumd} $L$ is a summand of $R$, regarded as an $A$-module,
    \item \label{cond:rank} $\rank L =N$.
  \end{enumerate}
  Then $L = \red_I \ph(S)$, and
  \begin{enumerate}
    \item[a.] $J$ is monic,
    \item[b.] $S/J$ is a free $A$-module of rank $N$, and
    \item[c.] $\l(J) = \{ y^\beta \mid \tph (y^\beta) \in \l(I) \}$.
  \end{enumerate}
\end{proposition}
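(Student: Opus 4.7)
The plan is to first identify $L$ with $N' := \red_I \ph(S)$ and then invoke Lemma \ref{lemma:one} for the remaining conclusions. Since $\ph$ is injective (noted after Definition \ref{definition:monicmap}) and $J = \ph^{-1}(I)$, the induced map $\bar\ph : S/J \to R/I$ is injective; composing with the $A$-linear isomorphism $R/I \isomap \red_I R$ of Definition \ref{definition:redI} identifies $S/J$ with $N'$, which is therefore a finitely generated projective $A$-module of rank $N$.

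Next I would verify $L = N'$ by a two-sided containment. One direction is immediate: given $\ell \in L$, condition (\ref{cond:phsb}) yields $\ell = \ph(s)$ for some $s \in S$, while condition (\ref{cond:redI}) gives $\red_I \ell = \ell$, so $\ell = \red_I \ph(s) \in N'$. For the reverse inclusion I would combine conditions (\ref{cond:sumd}) and (\ref{cond:rank}): a decomposition $R = L \oplus L'$ of $A$-modules restricts to $N' = L \oplus (N' \cap L')$ by the usual elementary argument, so $N'/L$ is projective. Additivity of rank then gives $\rank(N'/L) = N - N = 0$, and a finitely generated projective $A$-module of rank zero vanishes (localize at each prime), forcing $N' = L$.

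Once $\red_I \ph(S) = L \subseteq \ph(S)$ is in hand, Lemma \ref{lemma:one} applies and delivers conclusions (a) and (c) directly. For (b), monicity of $J$ gives an $A$-linear isomorphism $S/J \isomap \red_J S$, whose right side is free over $A$ on the monomials of $[y]$ not in $\l(J)$; this basis must contain exactly $N$ elements because $S/J$ is projective of rank $N$. The step requiring the most care is the rank argument closing $L = N'$, specifically the propagation of the summand condition from $R$ down to $N'$ and the appeal to the vanishing of a finitely generated projective module of rank zero; the remainder of the proof is essentially bookkeeping.
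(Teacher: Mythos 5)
Your proposal is correct and follows essentially the same route as the paper's proof: identify $S/J$ with $\red_I\ph(S)$ via the isomorphism $R/I \isomap \red_I R$, use hypotheses (\ref{cond:phsb}) and (\ref{cond:redI}) to get $L \subseteq \red_I\ph(S)$, conclude equality from the summand and rank conditions, and then invoke Lemma \ref{lemma:one} for monicity and the description of $\l(J)$. The only difference is that you spell out the modular-law restriction of the splitting and the vanishing of a rank-zero finitely generated projective module, steps the paper compresses into ``hence equal.''
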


\begin{remark}
  Notice that \ref{cond:sumd} implies $L$ is a finitely generated projective $A$-module, so $\rank L$ is well defined, allowing its use in \ref{cond:rank}.
\end{remark}

\begin{remark}
  A slightly more general statement can be made when $\rank L$ is infinite by considering graded rings and assuming that each graded
  component of $S/J$ is of finite known rank.
\end{remark}

\begin{proof}
  The $A$-linear isomorphism $R/I \isomap \red_I R$ induces an $A$-linear isomorphism $S/J \isomap
  \red_I \ph(S)$.  By part \ref{cond:phsb} of the hypothesis, we know that $\red_I L \subseteq \red_I \ph(S)$ and by part \ref{cond:redI} we know that $L = \red_I L$, since every element of $L$ is reduced modulo $I$. Thus we obtain $L  \subseteq \red_I \ph(S)$.  From condition \ref{cond:sumd} of the hypothesis we infer that $L$ is also a summand of $\red_I \ph(S)$.  From  condition \ref{cond:rank} of the hypothesis, we can
deduce that $L$ and $\red_I \ph(S)$ are finitely generated projective modules of the same rank, hence equal.  Finally, using condition \ref{cond:phsb} we  conclude that $\red_I \ph(S) \subseteq \ph(S)$, which is the hypothesis needed to apply  Lemma \ref{lemma:one}. This lemma yields that $J$ is monic, and therefore that $S/J$ is free over $A$.  The last statement in the conclusion follows from part \ref{one:monoms} of Lemma \ref{lemma:one}.
\end{proof}

\subsection{Alternate construction of \Gbs{} defined via monic morphisms}

Let $\ph : S \to R$ be a monic morphism.  Let $I \subseteq R$ be a monic ideal,
and let $J = \ph^{-1}(I)$. Suppose we know how to compute $\red_I$ (e.g., we
might know a \Gb{} of $I$).  In the rest of this section, we discuss a way to
compute $\red_J$ (with respect to the induced monomial ordering on $S$) without
first constructing a \Gb{} of $J$, at least in some cases when $\ph$ and $I$
satisfy extra conditions.

\begin{proposition}\label{star}
 Let $S = A[y] = A[y_1,\dots,y_n]$ and $R = A[x] = A[x_1, \dots, x_m]$ be
  polynomial rings equipped with monomial orderings $<_x$ and $<_y$ respectively.  Suppose that $\ph:S \to R$ is a monic $A$-algebra homomorphism, $I$ is a monic ideal of $R$, $J = \ph^{-1}(I)$, and $\red_I \ph(S) \subseteq \ph(S)$.
  Suppose also that the following condition is satisfied.
  \begin{enumerate}
    \item[($\ast$)] for any monomial $y^\beta$ of $S$, if any term of
      $\ph(y^\beta)$ is in $\l(I)$, then the lead term $\l(\ph(y^\beta))$ is in $\l(I)$.
  \end{enumerate}
  Then $\red_I \circ \ph = \ph \circ \red_J$.
\end{proposition}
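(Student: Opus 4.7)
The plan is to verify that $\ph(\red_J f)$ is itself a reduced representative of $\ph(f)$ modulo $I$, so that the uniqueness of the reduction (recorded in the paragraph preceding Definition \ref{definition:redI}) forces $\ph(\red_J f) = \red_I \ph(f)$. This reduces the proposition to two checks. Congruence mod $I$ is automatic, since $f - \red_J f \in J = \ph^{-1}(I)$ implies $\ph(f) - \ph(\red_J f) \in I$. The interesting point is reducedness: no monomial appearing in $\ph(\red_J f)$ lies in $\l(I)$.

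For this, I first need $\red_J$ to be defined, so I appeal to Lemma \ref{lemma:one}(\ref{one:monic}) to see that $J$ is monic. By construction, $\red_J f = \sum_\beta a_\beta y^\beta$ is an $A$-linear combination of monomials $y^\beta \notin \l(J)$, and Lemma \ref{lemma:one}(\ref{one:monoms}) rephrases this as $\tph(y^\beta) = \l(\ph(y^\beta)) \notin \l(I)$ for each $\beta$ with $a_\beta \ne 0$.

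Now I invoke condition ($\ast$) in its contrapositive form: if $\l(\ph(y^\beta)) \notin \l(I)$, then no term of $\ph(y^\beta)$ lies in $\l(I)$. Applying $\ph$ to the expansion of $\red_J f$ and collecting, every monomial with nonzero coefficient in $\ph(\red_J f)$ already occurs in some $\ph(y^\beta)$ with $a_\beta \ne 0$, and therefore lies outside the monomial ideal $\l(I)$. Thus $\ph(\red_J f)$ is $\red_I$-reduced, and uniqueness of the reduction gives $\ph(\red_J f) = \red_I \ph(f)$, as desired.

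I do not anticipate a genuine obstacle here; the only point requiring care is to keep separate the two a priori different conditions ``some term of $\ph(y^\beta)$ is in $\l(I)$'' and ``$\l(\ph(y^\beta)) \in \l(I)$'', since it is precisely the hypothesis ($\ast$) that equates them and thereby allows the term-by-term reducedness check to go through after applying $\ph$.
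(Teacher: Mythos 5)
Your proof is correct and follows essentially the same route as the paper's: both arguments show that applying $\ph$ to a $J$-reduced polynomial yields an $I$-reduced polynomial, using Lemma \ref{lemma:one}(\ref{one:monoms}) together with the contrapositive of ($\ast$), and then conclude by the uniqueness of the reduced representative in a congruence class modulo $I$. The only cosmetic difference is that the paper phrases this as replacing $g$ by $\red_J g$ and checking term-by-term on monomials, while you package it as verifying that $\ph(\red_J f)$ is a reduced representative of $\ph(f)$.
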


\begin{remark}
  Under the hypotheses, Lemma \ref{lemma:one} applies and states that $J$ is
  monic, hence $\red_J$ is defined, allowing its use in the claim.
\end{remark}

\begin{proof}
  Consider a polynomial $g \in S$.  We wish to prove $\red_I(\ph(g)) = \ph(\red_J g)$.  Replacing $g$ by
  $\red_J g$ does not alter either side of the equation, so we may assume $g$ is reduced modulo $J$, and try to show that
  $\ph(g)$ is reduced modulo $I$.  The polynomial $g$ is reduced if and only if each of its terms is, so we may assume
  that $g$ is a monomial: $g = y^\beta \notin \l(J)$.  If $\ph(y^\beta)$ were not reduced modulo $I$, then by
  hypothesis, we would know that $\tph(y^\beta) \in \l(I)$, contradicting statement \ref{one:monoms} of Lemma \ref{lemma:one}.
\end{proof}

\begin{remark}\label{fastelim}
  The computation of $\ph^{-1}$ may be fast enough in practice for it to be
  feasible, in light of Proposition \ref{star}, to compute $\red_J$ as
  $\ph^{-1} \circ \red_I \circ \ph$.

  We often know $\l(J)$, e.g., from Lemma \ref{lemma:one}.  If $y^\beta \in \l(J)$,
  the corresponding \Gb{} element is $y^\beta - \ph^{-1} \circ \red_I \circ
  \ph(y^\beta)$.  If $\ph^{-1}$ can be computed fast enough, this can be a
  quick way to determine a minimal 
  \Gb{} of $J$.

  One way to compute $\ph^{-1}$ is via a \Gb{} with an elimination order. In
  the polynomial ring $A[x,y]$, consider the ideal $L = \langle y_i - \ph(y_i)
  \rangle$.  Choose a block ordering $[x] \gg [y]$ (see Definition
  \ref{prodorddef}).  If $L$ is monic (or, if $A$ is a field), then
  $\ph^{-1}(f) = \red_L(f) \in A[y]$ if $f$ is in the image of $\ph$.
\end{remark}

\section{Another proof}\label{section:another-proof}

In this section we give another proof of Proposition \ref{proposition:meatyprop}.
The following lemma discusses $\PF_1(f)$, the simplest case.

\begin{lemma}\label{lemma:onelinear}
  Let $f(x) \in A[x]$ be a monic polynomial of degree $n$ in one variable, and let $R= A[c_{ij} \vert 1 \leq i \leq r,1 \leq j \leq n_i], I=\langle f \%(x+c_{11})\rangle$ and set $ \PF_1(f):=R/I$ as in definition \ref{PFdef}.  Then:
  \begin{enumerate}
    \item \label{onelinear:a} $I$ is generated by $f(-c_{11})$ and is monic;
    \item \label{onelinear:b} $\l(I) = \langle c_{11}^n \rangle$ relative to the unique monomial ordering on $R$;
    \item \label{onelinear:c} the algebra $\PF_1(f)$ is monic and is free with rank $n$ as an $A$-module;
    \item \label{onelinear:d} the quotient $f/(x+c_{11})$ in $\PF_1(f)[x]$ is the image
      of $f \quo (x+c_{11}) \in R[x]$; and
    \item \label{onelinear:e} if $n \ge 1$, then the map $A \to \PF_1(f)$ is injective.
  \end{enumerate}
\end{lemma}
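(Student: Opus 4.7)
The plan is to reduce everything to a direct computation of $f \rem (x+c_{11})$. First I would observe that since $g_1(x) = x + c_{11}$ is monic of degree one, polynomial long division in $R[x]$ yields
\[ f(x) = (x+c_{11}) \cdot q(x) + f(-c_{11}),\]
where $q(x) = f \quo (x+c_{11}) \in R[x]$, and the constant remainder is
\[ f(-c_{11}) = (-1)^n c_{11}^n + (-1)^{n-1} b_1 c_{11}^{n-1} + \dots + b_n \in R = A[c_{11}].\]
Therefore $I = \langle f \rem (x+c_{11}) \equiv 0 \rangle$ is the principal ideal generated by the single element $f(-c_{11})$, establishing the first half of (\ref{onelinear:a}).

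Next I would check monicity and identify the lead term. In the unique monomial order on $R = A[c_{11}]$ (decreasing powers of $c_{11}$), the generator $f(-c_{11})$ has lead term $(-1)^n c_{11}^n$; multiplying by $(-1)^n$ produces a monic generator, completing (\ref{onelinear:a}) and simultaneously showing $\l(I) = \langle c_{11}^n \rangle$, which is (\ref{onelinear:b}). Part (\ref{onelinear:c}) then drops out of Definition \ref{definition:redI}: the monomials not in $\l(I)$ are precisely $1, c_{11}, \ldots, c_{11}^{n-1}$, and these form a free $A$-basis of $R/I = \PF_1(f)$, so the algebra is monic and free of rank $n$.

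For (\ref{onelinear:d}), I would reduce the division identity displayed above modulo $I$. Since $f(-c_{11}) \in I$, we obtain $f(x) = (x+c_{11}) \cdot \bar q(x)$ in $\PF_1(f)[x]$, where $\bar q$ is the image of $q = f \quo (x+c_{11})$. Uniqueness of the quotient upon division by the monic polynomial $x+c_{11}$ in $\PF_1(f)[x]$ then identifies $\bar q$ with $f/(x+c_{11})$. Finally, (\ref{onelinear:e}) follows from (\ref{onelinear:c}): when $n \ge 1$, the monomial $1$ lies outside $\l(I)$, so $1 \in \PF_1(f)$ is one of the free $A$-basis elements, and the structure map $a \mapsto a \cdot 1$ is therefore injective.

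There is no real obstacle here: the content of the lemma is that $\PF_1$ is essentially by inspection a monic algebra, furnishing the base case upon which Theorem \ref{theorem:ourtheorem} is built via the tower construction and Proposition \ref{composite}. The one point that requires any care is absorbing the sign $(-1)^n$ on the generator so that the lead coefficient really is $1$ in the sense of Section \ref{section:arb}; everything else is formal.
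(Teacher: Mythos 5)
Your proposal is correct and follows essentially the same route as the paper: observe that $f \rem (x+c_{11}) = f(-c_{11})$ is, up to the sign $(-1)^n$, monic of degree $n$ in $c_{11}$, identify the unique monomial order and the lead term ideal $\langle c_{11}^n\rangle$, and read off the remaining claims. Your write-up simply supplies more detail than the paper's terse ``the other claims follow.''
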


\begin{proof}
  One observes that $f \rem (x+c_{11}) = f(-c_{11})$, which, up to sign, is
  a monic polynomial of degree $n$ in the variable $c_{11}$, establishing claim
  \ref{onelinear:a}.  The polynomial ring $R$ has just one variable,
  $c_{11}$, so its unique monomial ordering is the one with $1 < c_{11} <
  c_{11}^2 < \cdots$, establishing claim \ref{onelinear:b}.  The other claims
  follow.
\end{proof}

The next lemma considers $\PF_{m:1}(f)$.  The resulting basis over $A$ is
well known; see, for example, \cite[Proposition 1.10]{laksov-MR2130463}.

\begin{lemma}\label{lemma:case1}
  Let $f(x) \in A[x]$ be a monic polynomial of degree $n$ in one variable, take $m \in \NN$
  with $m \le n$, and let $ \PF_{m:1}(f):=R/I$ as in definition \ref{PFdef}.  Equip $R$ with any monomial
  ordering satisfying $c_{11} > c_{21} > \dots > c_{m1}$.  Then:
  \begin{enumerate}
    \item $I$ is a monic ideal;
    \item $\l(I) = \langle c_{11}^{n-m+1}, c_{21}^{n-m+2}, \dots, c_{m1}^n \rangle$;
    \item as an $A$-module, $\PF_{m:1}(f)$ is free, with rank $n (n-1) \dots (n-m+1)$; and
    \item the map $A \to \PF_{m:1}(f)$ is injective.
  \end{enumerate}
  Furthermore, a monic \Gb{} for $I$ can be described explicitly as shown in the proof (the \Gb{} consists of the polynomials $\{Q_1,\ldots, Q_m\}$ defined therein). 
\end{lemma}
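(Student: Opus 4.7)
I would prove Lemma \ref{lemma:case1} by induction on $m$; the base case $m = 1$ is precisely Lemma \ref{lemma:onelinear}. The idea for the inductive step is to split off one linear factor at a time and compose the two \Gbs{} via Proposition \ref{composite}.

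For the inductive step, set $B := A[c_{m1}]/\langle f(-c_{m1}) \rangle$, which by Lemma \ref{lemma:onelinear} is isomorphic to $\PF_1(f)$, is a monic $A$-algebra, free of rank $n$ over $A$ with $A \hookrightarrow B$, and has lead ideal $\langle c_{m1}^n \rangle$. In $B[x]$ the factor $x + c_{m1}$ divides $f$, so $f_1(x) := f(x) \quo (x + c_{m1}) \in B[x]$ is a monic polynomial of degree $n - 1$. By the universal property spelled out in Remark \ref{PFfunctor}, $\PF_{m:1}(f) \cong \PF_{(m-1):1}(f_1)$ as $A$-algebras, where the right-hand side is built over $B$ in the variables $c_{11}, \ldots, c_{m-1,1}$. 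Applying the inductive hypothesis to $f_1$ over $B$ yields a monic $B$-algebra structure with rank $(n-1)(n-2)\cdots(n-m+1)$ and lead ideal $\langle c_{11}^{n-m+1}, \ldots, c_{m-1,1}^{n-1} \rangle$. Proposition \ref{composite} applied with the block ordering $[c_{11}, \ldots, c_{m-1,1}] \gg [c_{m1}]$ then produces a \Gb{} of the defining ideal of $\PF_{m:1}(f)$ over $A$ and identifies the lead ideal as $\langle c_{11}^{n-m+1}, \ldots, c_{m-1,1}^{n-1}, c_{m1}^n \rangle$. The claimed rank $n(n-1)\cdots(n-m+1)$ and the injectivity $A \hookrightarrow \PF_{m:1}(f)$ (which holds because $1$ is not in the lead ideal) follow immediately.

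To extend the conclusion from this specific block ordering to an arbitrary monomial ordering satisfying $c_{11} > \cdots > c_{m1}$, I would invoke Lemma \ref{lemma:ordering}; this requires verifying that each \Gb{} element retains its lead monomial under the change of ordering. For this, the key structural observation is that the element whose block-ordering lead is $c_{i1}^{n-m+i}$ is, up to sign, a lift of $f_{m-i}(-c_{i1})$ to $A[c_{i1}, \ldots, c_{m1}]$, weighted-homogeneous of degree $n-m+i$ when $\deg c_{j1} = 1$ and $\deg b_j = j$. Consequently every non-leading $c$-monomial appearing has the form $\prod_{j \geq i} c_{j1}^{\beta_j}$ with $\sum_{j \geq i} \beta_j \leq n-m+i$ and $\beta_i < n-m+i$. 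A short comparison, using only compatibility of a monomial ordering with multiplication together with $c_{i1} > c_{j1}$ for $j > i$, then shows that $c_{i1}^{n-m+i}$ strictly dominates every such monomial.

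I expect this final stability-of-leads verification to be the main obstacle of the proof; the tower construction and the composition via Proposition \ref{composite} are essentially mechanical once the inductive hypothesis is in hand, so the real work is in extracting enough information about the support and weighted homogeneity of the \Gb{} elements to run the monomial comparison cleanly.
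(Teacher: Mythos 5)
Your proof is correct and follows essentially the same strategy as the paper's: peel off one linear factor at a time via Lemma \ref{lemma:onelinear}, compose the resulting \Gbs{} with Proposition \ref{composite} under the lexicographic block ordering, and then use the weighted-homogeneity of the $Q_i$ together with Lemma \ref{lemma:ordering} to pass to an arbitrary ordering with $c_{11} > \dots > c_{m1}$. The only cosmetic difference is that you phrase the tower as an induction on $m$ while the paper writes out all $m$ stages at once; the stability-of-leads argument at the end is the same.
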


\begin{proof}
  We construct $\PF_{m:1}(f)$ as a tower with $m$ stages, by letting $f$
  acquire one linear factor at a time, so that each stage is described by Lemma
  \ref{lemma:onelinear}, and so their \Gbs{} can be composed with Proposition
  \ref{composite}.  First we adjoin the variable $c_{m1}$ and form the quotient $B_1=A[c_{m,1}]/I_1$
  where $I_1=\langle f \rem (x+c_{m,1}) \equiv 0 \rangle$.  Then we adjoin
  the variable $c_{{m-1},1}$ and form the quotient $B_2=B_1/I_2$, where  
  $$I_{2}=\langle (f \quo (x+c_{m,1})) \rem (x+c_{{m-1},1}) \equiv 0 \rangle\subset B_1.$$  
 Inductively we continue adjoining at each step $1\leq i\leq m$ the variable $c_{{m-i},1}$ to $B_{i}$, we set
 $f_{i+1}(x)=  (f \quo((x+c_{m,1})\dots(x+c_{{m-i+1},1}))) \rem (x+c_{{m-i},1}) $ and form the quotient $B_{i+1}=B_i/I_{i+1}$ by the ideal $I_{i+1}=\langle f_{i+1} \equiv 0 \rangle$. With an eye towards applying Proposition
  \ref{composite}, we first replace the polynomials $f_i$ by monic polynomials. 
 For $i = 1, \dots, m$, write $$Q_i= (-1)^{n-i+1} (f \quo
  ((x+c_{m,1})\dots(x+c_{{m-i+2},1}))) \rem (x+c_{{m-i+1},1}) \in B_i$$ and notice that $Q_i$  involves
  just the variables $c_{{m-i+1},1}, \ldots, c_{m1}$.  The polynomials $Q_i$ are
 the same as the polynomials $f_i$ defining the rings $B_1, \ldots, B_m$ above, with the sign
  altered to ensure that the coefficient of the lead monomial
  in $Q_i$ is $1$. It is shown below that this lead monomial is  $c_{{m-i+1},1}^{n-i+1}$.
  
  Proposition \ref{composite} provides all four parts of our claim, if the
  monomial ordering is the block ordering $[c_{11}] \gg \dots \gg [c_{m1}]$,
  which is also called the lexicographic ordering.  In this ordering, the lead
  monomial of $Q_i$ is $c_{{m-i+1},1}^{n-i+1}$. Indeed, this follows by part 1 of Lemma \ref{lemma:case1}, noticing that $Q_i$ is obtained by evaluating the polynomial $(f \quo((x+c_{m1})\dots(x+c_{{m-i+2},1})))$ at $x=-c_{{m-i},1}$. Since the quotient polynomial has degree $n-i+1$ and is monic, it follows that $c_{{m-i+1},1}^{n-i+1}$ is a term in $Q_i$. Part 2 of Lemma \ref{lemma:case1} shows that $\l_{<_{[c_{m-i+1}]}}(Q_i)=c_{{m-i+1},1}^{n-i+1}$, but since the monomials appearing in $Q_i$ are in the monoid generated by $[c_{m-i+1},\ldots,c_{m1}]$ and the block ordering gives priority to $[c_{{m-i+1},1}]$, it follows that $\l_{<_{\rm{lex}}}(Q_i)=\l_{<_{[c_{m-i+1}]}}(Q_i)=c_{{m-i+1},1}^{n-i+1}$. Composing the \Gbs{} of the ideals $I_i$ using Proposition \ref{composite} yields that  the ideal $I=\langle Q_1, \dots, Q_m \rangle$ and that the latter set of generators forms a \Gb{} for $I$.  (In the case where $m = n$, this \Gb{} is also presented in the first chapter of \cite{MR1255980}.)

  To see what happens if we use a different monomial ordering, we determine the
  monomials appearing in $Q_i$.  For that purpose, we temporarily consider the
  universal example of a monic polynomial $f$ of degree $n$, which has the form
  $f = x^n + b_1 x^{n-1} + \dots + b_n \in \ZZ[b_1,\dots,b_n][x]$.  Recall from Lemma \ref{lemma:homogeneous} 
  that if we assign degrees to all the variables, so that $x$ and each $c_{i1}$ have degree $1$,
  and $b_i$ has degree $i$, then $f$ and all the
  polynomials appearing above are homogeneous.  In particular, $Q_i$ is
  homogeneous of degree $n-i+1$.  Letting the degree of $x$ and $b_1, \dots, b_n$ revert to
  their former values, which are of degree $0$, we see that each term of $Q_i$
  has degree at most $n-i+1$ in the variables $c_{m-i+1,1},\ldots,c_{m,1}$ and
  contains the term $c_{{m-i+1},1}^{n-i+1}$ of degree equal to $n-i+1$.
 
 Now we see that if we use another monomial ordering satisfying $c_{11} >
  \dots > c_{m1}$, then the lead terms of the $Q_i$ remain the same, since $c_{{m-i+1},1}^{n-i+1}$ is larger than any monomial of degree at most $n-i+1$ in $[c_{m-i+1,1},\ldots,c_{m,1}]$. 
  Therefore with respect to this new ordering, $\{Q_1, \dots, Q_m\}$ remains a
  \Gb{} and $I$ remains monic over $A$, by Lemma \ref{lemma:ordering}.
\end{proof}

Here is our second proof of Proposition \ref{proposition:meatyprop}, in which we 
compute the leading term ideal for $\PF_m(f)$.

\begin{proof}[Second proof of Proposition \ref{proposition:meatyprop}]

Recall that in the statement of Proposition \ref{proposition:meatyprop} we have 
\begin{eqnarray*}
f(x) &=& x^n+b_1x^{n-1}+\ldots+b_n\in A[x]  \text{ and } \\
g(x) &=& x^m + c_{1}x^{m-1} + \dots + c_{m}\in A[c_1,\ldots,c_m][x].
\end{eqnarray*}
 In order to analyze the splitting algebra $\PF_m(f)$ in which the polynomial $f$ acquires a factor $g$ of degree $m$, we shall construct a map $\bar\ph: \PF_m(f) \to \PF_{m:1}(f)$ and show that it factors though an algebra $\PF_{m:1}(\bar{g})$ defined below. Towards this end let
\begin{equation*}
S =A[c_i \vert 1\leq i\leq m], \
R=A[c_{i1} \vert 1\leq i\leq m], \text{ and }
R' =A[c_i,c_{i1} \vert 1\leq i\leq m]
\end{equation*}
and set  $g_i = x^{n_i} + c_{i1}x^{n_i-1} + \cdots + c_{in_i} \in
  R[x]$ as in definition \ref{PFdef}. 
For each of the rings above consider the ideals
\begin{eqnarray*}
J &=& \langle f \rem g \equiv 0 \rangle\subset S, \\
I &=& \langle f\rem g_1\ldots g_m \equiv 0 \rangle\subset R, \text{ and }\\
I' &=& \langle f\rem g \equiv 0, g\rem g_1\ldots g_m \equiv 0 \rangle\subset R'.
 \end{eqnarray*}
  With this notation, by Definition \ref{PFdef} we have  $\PF_{m}(f)=S/J$ and $\PF_{m:1}(f)=R/I$. 
  Consider the map $\ph : S \to R$ that sends $g(x)=x^m + c_{1}
  x^{m-1} + \dots + c_{m} \in S[x]$ to $\prod_{i=1}^m (x+c_{i1}) \in R[x]$ by sending each variable $c_i\in S$ to the $i$-th symmetric polynomial in the variables $c_{11},\ldots,c_{m1}$ of $R$. It is easy to see that this map satisfies $\ph(J)\subseteq I$, and thus that $\ph$ induces a ring homomorphism $\bar\ph:S/J\to R/I$. 
  
    The algebra $\PF_m(f)$ comes equipped with a monic factor $\bar g(x) \in \PF_m(f)[x]$ of $f(x)$ with  $\deg(\bar g)=m$, where $\bar g(x)$ is the image of $g(x)$ in $R'/I'$.  The definition of $\PF_{m:1}(\bar g)$ together with $\PF_m(f)=R/I$ give the following explicit description
\begin{eqnarray*}
\PF_{m:1}(\bar g) &=& \PF_m(f)[c_{i1}\vert 1\leq i\leq m]/\langle \bar g\rem g_1\ldots g_m \equiv 0 \rangle\\
&\cong& R[c_{i1} \vert 1\leq i\leq m]/\langle f\rem g \equiv 0, g\rem g_1\ldots g_m \equiv 0 \rangle= R'/I'.
\end{eqnarray*}

Next we show that the homomorphism $\bar\ph:S/J\to R/I$ factors  as illustrated in the diagram below
 \[\xymatrix{
     S/J \ar@{=}[r] & \PF_m(f) \ar[r]\ar[dr]_{\bar\ph} & \PF_{m:1}(\bar g) \ar[d]^\cong \ar@{=}[r] & R'/I' \\
                    &                                  & \PF_{m:1}(f)              \ar@{=}[r] & R/I
   }
  \]

 The horizontal arrow in the above diagram is the homomorphism  induced by the inclusion $S\hookrightarrow R'$ at the level of the respective quotients. Since a factorization of the factor $g$ of $f$ into $m$ degree 1 factors gives a factorization of $f$ having $m$ degree 1 factors, there is an isomorphism $R'/I'\cong R/I$ represented by the vertical arrow in the diagram. The commutativity of the diagram stems from the fact that, in $\PF_{m:1}(\bar g)$, $c_i$ is equal to the  the $i$-th symmetric polynomial in the variables $c_{11},\ldots,c_{m1}$.

  Applying Lemma~\ref{lemma:case1} to $\PF_{m:1}(f)$ and to $\PF_{m:1}(\bar g)$ respectively, we
  obtain the isomorphisms displayed below, where the first is an $A$-module isomorphism and the second is a $\PF_m(f)$-module isomorphism, hence also an $A$-module isomorphism.
  \begin{eqnarray*}
  \PF_{m:1}(f) &\cong& A^{n(n-1)\dots(n-m+1)}\\
  \PF_{m:1}(\bar g) &\cong& \PF_m(f)[c_{i1}\vert 1\leq i\leq m]/\langle \bar g\rem g_1\ldots g_m \equiv 0 \rangle \cong \PF_m(f)^{m(m-1)\dots 1}.
  \end{eqnarray*}
  
  Since  we have $ \PF_{m:1}(f) \cong \PF_{m:1}(\bar g)$ from the preceding diagram, the $A$-module isomorphism $A^{n(n-1)\dots(n-m+1)}\cong\PF_m(f)^{m(m-1)\dots 1}$ follows, 
and thus
  $\PF_m(f)$ is a finitely generated projective $A$-module having rank 
  $$N =(n(n-1)\dots(n-m+1)) / (m(m-1)\dots 1) = {\binom n m}.$$
Observe that $\ph$ is
  monic (see Definition \ref{definition:monicmap}), because $\l_{\rm{grevlex}}(\ph(c_{1i})) = c_{11}
  \dots c_{i1}$ is monic and the monomials of the form $c_{11} \dots c_{i1}$
  are multiplicatively independent.  Observe also that the composite map $\bar \ph: S/J \to R/I$ is injective by applying Lemma \ref{lemma:case1} to see that $S/J \to R'/I'$ is injective. Thus we conclude that  $J = \ph^{-1}(I)$.

  With an eye toward applying Proposition \ref{proposition:cond}, we consider for each sequence $\alpha$
 with  $n-m \ge \alpha_1 \ge \dots \ge \alpha_m \ge 0$ the symmetric sum $s_\alpha=\sum_{\sigma\in S_n}\prod_{i=1}^m c_{1\sigma(i)}^{\alpha_i}$ of the
  distinct monomials obtained from $c_{11}^{\alpha_1}\dots c_{m1}^{\alpha_m}$
  by permuting the variables in all possible ways. We define  $L$
  to be the $A$-submodule of $R$ generated by the set $\{s_\alpha\}$. Since $\ph$ sends the variables of $S$ to the elementary symmetric polynomials in $R'$, the theorem on
  elementary symmetric functions gives that the image of $\ph$ consists of all
  the symmetric polynomials in $R$, so $L \subseteq \ph(S)$, establishing hypothesis
  \ref{cond:phsb} of Proposition \ref{proposition:cond}.  Moreover, according
  to our computation in Lemma \ref{lemma:case1}, $\l_{\rm{grevlex}}(I)=\langle c_{11}^{n-m+1}, c_{21}^{n-m+2},
  \dots, c_{m1}^n \rangle$. Since the monomials appearing in $s_\alpha$ are all smaller in the graded reverse lexicographic order than each generator of $\l_{\rm{grevlex}}(I)$, it follows that each $s_{\alpha}$ is already
  reduced modulo $I$, establishing hypothesis \ref{cond:redI} of 
  Proposition \ref{proposition:cond}.  By a standard combinatorial argument, the number of sequences
  $\alpha$ referred to above is ${\binom n m} = N$. Since the generators of $L$
  have distinct lead terms, we see that $L$ is a free $A$-submodule and summand
  of $R$ whose rank is $N$, establishing hypotheses \ref{cond:sumd} and
  \ref{cond:rank} of  Proposition \ref{proposition:cond}.  Therefore the Proposition applies, showing
  that $J$ is monic and that $S/J$ is a free $A$-module of rank $N$, proving claims
  \ref{meatyprop:monic} and \ref{meatyprop:rank} of Proposition \ref{proposition:meatyprop}.

  Now we describe the monomial ordering on $S$ induced by $\ph$.  By Lemma
  \ref{lemma:case1}, we are free to choose any monomial ordering on $R$ such
  that $c_{11} > c_{21} > \dots > c_{m1}$.  We choose the graded reverse
  lexicographic ordering.  Then we have the following equivalent statements
  \begin{align*}
    c^\beta & \le c^\gamma  \\
    & \Leftrightarrow \l(\ph(c^\beta)) \le_{\rm{grevlex}} \l(\ph(c^\gamma)) 
    \\
     & \Leftrightarrow \prod_{i=1}^m\l(\ph(c_{i1}))^{\beta_i} \le_{\rm{grevlex}}  \prod_{i=1}^m\l(\ph(c_{i1}))^{\gamma_i }
\\
   & \Leftrightarrow \prod_{i=1}^m\l\left(\sum_{1\leq\alpha_1\leq\ldots\alpha_i\leq m} \prod_{j=1}^{i}c_{\alpha_j1}\right)^{\beta_i} \le_{\rm{grevlex}}  \prod_{i=1}^m\l\left(\sum_{1\leq\alpha_1\leq\ldots\alpha_i\leq m} \prod_{j=1}^{i}c_{\alpha_j1}\right)^{\gamma_i} 
\\
   & \Leftrightarrow c_{11}^{\beta_1}(c_{11}c_{21})^{\beta_2}\dots(c_{11}\dots c_{m1})^{\beta_m} \le_{\rm{grevlex}}  c_{11}^{\gamma_1}(c_{11}c_{21})^{\gamma_2}\dots(c_{11}\dots
    c_{m1})^{\gamma_m}\\
   & \Leftrightarrow c_{11}^{\beta_1 + \dots + \beta_m} c_{21}^{\beta_2 + \dots + \beta_m} \dots
    c_{m1}^{\beta_m} \le_{\rm{grevlex}}  c_{11}^{\gamma_1 + \dots + \gamma_m} c_{21}^{\gamma_2 + \dots + \gamma_m} \dots
    c_{m1}^{\gamma_m} 
    \\
   & \Leftrightarrow 
    \begin{cases} \beta_1 + 2 \beta_2 \dots + m \beta_m\le \gamma_1 + 2 \gamma_2 \dots + m \gamma_m \mbox{ and }\\
    (-\beta_m, - \beta_m - \beta_{m-1}, \dots, -\beta_1)\le_{\rm{lex}} (-\gamma_m, - \gamma_{m-1}, \dots, -\gamma_1)
    \end{cases}
     \end{align*}
  This monomial ordering on $S$ is the {\em graded reverse lexicographic}
  ordering, with each variable $c_{1i}$ taken to be of degree $i$ (the natural
  degree), and with the variables listed in the order $c_{11} > \dots > c_{1m}$.
Had we retained the original lexicographic ordering on the monomials of $R$,
  the ordering on $S$ induced by $\ph$ would have been the (less natural)
  graded reverse lexicographic ordering with the variables all taken to be of degree $1$
  and listed in the order $c_{m1} >\dots>c_{11}$.

  We can easily compute $\l_{\rm{grevlex}}(J)$ now.  By Proposition \ref{lemma:one} part \ref{one:monoms} we know that
  \begin{align*}
    c^\beta \in \l_{\rm{grevlex}}(J) 
    & \Leftrightarrow \l_{\rm{grevlex}}(\ph(c^\beta)) \in \l_{\rm{grevlex}}(I) \\
    & \Leftrightarrow c_{11}^{\beta_1 + \dots + \beta_m}
      c_{21}^{\beta_2 + \dots + \beta_m} \dots c_{m1}^{\beta_m} \in \l_{\rm{grevlex}}(I)
 \end{align*}
  A monomial $ c_{11}^{\beta_1 + \dots + \beta_m} c_{21}^{\beta_2 + \dots +
    \beta_m} \dots c_{m1}^{\beta_m}$ of $R$ lies in $\l_{\rm{grevlex}}(I)$, which was
  described above, if and only if one of the following inequalities is
  satisfied:
  \begin{enumerate}
    \item[] $n-m+1 \le \beta_1 + \dots + \beta_m$
    \item[] $n-m+2 \le \beta_2 + \dots + \beta_m$
    \item[] \dots
    \item[] $n \le \beta_m $
  \end{enumerate}
  The left sides of the inequalities are increasing, and the right sides are
  decreasing, so if any of the inequalities is satisfied, so is the first one.
  Since $c^\beta \in \l_{\rm{grevlex}}(J) \Leftrightarrow n-m+1 \le \beta_1 + \dots +
  \beta_m$, we see that $\l_{\rm{grevlex}}(J)=\{ c^\beta \mid \beta_1 + \dots
  + \beta_m = n-m+1 \}=\langle c_1,\cdots, c_m\rangle^{n-m+1} $ and $\red_J S$ has the set $\{ c^\beta \mid \beta_1 +
  \dots + \beta_m \le n-m \}$ as $A$-basis.  Thus we have proved part
  \ref{meatyprop:leadterms} of Proposition \ref{proposition:meatyprop}.

Parts \ref{meatyprop:monic} and \ref{meatyprop:rank} have been established in the course of the proof. 
Finally, part \ref{meatyprop:inject} follows because the rank in part \ref{meatyprop:rank} is nonzero.
\end{proof}

\section{Computation of pushforward maps}\label{section:computepilowerstar}

As in the introduction, we let $\EE$ be a vector bundle of rank $n$ on a
nonsingular variety $X$, and let $c(\EE) = 1 + c_1\EE + \dots + c_n \EE$ denote
the total Chern class of $\EE$ in the intersection ring $A(X)$.  Choose a
partition $n = n_1 + \cdots + n_{r}$ and let $\FF := \Fl_{n_1,\dots,n_r}(\EE)$
denote the flag bundle of $\EE$ that parametrizes filtrations of $E$ over points of $X$, where the successive
subquotients from the filtration have ranks $n_1, \dots, n_r$.  Let $\pi : \FF \to X$ denote
the structure map, and let $\pi^* \EE = \EE_r \supseteq \EE_{r-1} \supseteq
\dots \supseteq \EE_1 \supseteq \EE_0 = 0$ denote the tautological filtration,
where $\rank(\EE_i/\EE_{i-1}) = n_i$. 

There is a close connection between $A(X)$ and the splitting algebras defined in the previous sections. Specifically, there is a natural map $\Phi : \PF_{n_1,\dots,n_r}(c_x(\EE)) \to A(\FF)$ provided by the
  factorization $\pi^* c_x(\EE) = c_x(\EE_r/\EE_{r-1}) \cdot \dots \cdot
  c_x(\EE_2/\EE_1) \cdot c_x(\EE_1/\EE_0)$, which sends the variable $c_{ij}$
  to the Chern class $c_j(\EE_i/\EE_{i-1})$. The following theorem of Grothendieck describes the intersection ring of $\FF$ by means of this homomorphism.
 
\begin{theorem}[{\cite[Theorem 1, p.~4-19]{grothendieck-1958}}]\label{groththm}
  The natural map $\Phi : \PF_{n_1,\dots,n_r}(c_x(\EE)) \to A(\FF)$  is an isomorphism.
\end{theorem}

We are interested in computation of the push-forward map $\pi_* : A(\FF) \to
A(X)$, because, in the case where $X$ is a point, $ \pi_* (t)=\int t $ is a
number, often with enumerative significance.  Parameter varieties constructed
iteratively as a sequence of flag bundles over flag bundles can also be handled,
by composing the various maps $\pi_*$.

Taking into account that $n = n_1 + \cdots + n_{r}$ and applying Theorem
\ref{theorem:ourtheorem}, we see that the single monomial of highest degree in the
basis of $\PF_{n_1,\dots,n_r}(c_x(\EE))$ is $c_{2,n_2}^{n_1}
c_{3,n_3}^{n_1+n_2} \dots c_{r,n_r}^{n_1+\dots+n_{r-1}}$.  The corresponding
element of $A(\FF)$ is $$\eta := c_{n_2}(\EE_2/\EE_1)^{n_1}
c_{n_3}(\EE_3/\EE_2)^{n_1+n_2} \dots
c_{n_r}(\EE_r/\EE_{r-1})^{n_1+\dots+n_{r-1}}.$$ 

\begin{proposition}\label{proposition:integral}
  $\pi_*\eta = 1$
\end{proposition}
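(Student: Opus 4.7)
The plan is to lift the computation from the partial flag bundle $\FF$ to the complete flag bundle $\widetilde\FF := \Fl_{1,\dots,1}(\EE)$, over which the result reduces to the projective bundle theorem. Let $\widetilde\pi : \widetilde\FF \to X$ denote the structure map and $q : \widetilde\FF \to \FF$ the natural refinement map, so that $\widetilde\pi = \pi \circ q$ and the tautological complete filtration $F_\bullet$ on $\widetilde\FF$ refines the partial filtration $\EE_\bullet$ via $F_{s_i} = \EE_i$, where $s_i := n_1 + \dots + n_i$. Setting $L_j := F_j/F_{j-1}$, the basis element of highest degree in $A(\widetilde\FF)$ over $A(X)$ supplied by Theorem~\ref{theorem:ourtheorem} is $\widetilde\eta := \prod_{j=2}^n c_1(L_j)^{j-1}$.

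First I would verify the key factorization $\widetilde\eta = q^*\eta \cdot \eta_q$, where $\eta_q := \prod_{i=1}^r \prod_{j=1}^{n_i} c_1(L_{s_{i-1}+j})^{j-1}$. Since $F_\bullet$ refines $\EE_\bullet$, on $\widetilde\FF$ each graded piece $\EE_i/\EE_{i-1}$ acquires an induced complete filtration with line bundle quotients $L_{s_{i-1}+1}, \dots, L_{s_i}$, so by the splitting principle $c_{n_i}(\EE_i/\EE_{i-1}) = c_1(L_{s_{i-1}+1}) \cdots c_1(L_{s_i})$. Substituting this into the defining formula for $\eta$ and then multiplying by $\eta_q$, the exponents combine index-by-index so that both sides of the proposed identity reduce to $\prod_{j=1}^n c_1(L_j)^{j-1}$.

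As an auxiliary lemma I would verify that the top-degree basis monomial of any complete flag bundle of a vector bundle pushes forward to $1$. This is a routine iteration of the projective bundle theorem: the complete flag bundle of a rank $m$ bundle is constructed as a tower of projective bundles $Y_{m-1} \to \dots \to Y_0$, at each stage introducing one new line bundle, and the factor of the top monomial attached to that line bundle is precisely the top power of the relative hyperplane class, which pushes forward to $1$ by the projective bundle theorem. Applied to $\widetilde\pi$ the lemma yields $\widetilde\pi_*\widetilde\eta = 1$. Moreover, $q$ factors as an iterated tower of complete flag bundles of the graded pieces $\EE_i/\EE_{i-1}$, and $\eta_q$ is the product of the top monomials of these complete flag bundles, so successive application of the lemma and the projection formula gives $q_*\eta_q = 1$.

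Finally, combining via the projection formula, $q_*\widetilde\eta = q_*(q^*\eta \cdot \eta_q) = \eta \cdot q_*\eta_q = \eta$, so $\pi_*\eta = \pi_*q_*\widetilde\eta = \widetilde\pi_*\widetilde\eta = 1$. The main obstacle is verifying the factorization $\widetilde\eta = q^*\eta \cdot \eta_q$ cleanly: it is elementary but requires careful bookkeeping with the exponents supplied by Theorem~\ref{theorem:ourtheorem} and the splitting principle. Once it is in hand, the projective bundle theorem and the projection formula assemble everything.
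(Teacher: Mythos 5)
Your proof is correct, but it takes a genuinely different route than the paper's. The paper proceeds by induction on $r$: it constructs a closed immersion $\lambda : \FF' \hookrightarrow \FF$ from the flag bundle $\FF' = \Fl_{n_1,\dots,n_{r-1}}(\EE')$ of a trivial subbundle $\EE'$ of rank $n_1+\dots+n_{r-1}$, computes the Gysin class $\lambda_*(1) = c_{n_r}(\EE_r/\EE_{r-1})^{n_1+\dots+n_{r-1}}$ as a top Chern class of a complete intersection (citing Grothendieck), and reduces $\pi_*\eta$ to $\pi'_*\eta'$ by the projection formula. You instead lift to the complete flag bundle $\widetilde\FF$, prove the factorization $\widetilde\eta = q^*\eta\cdot\eta_q$, and reduce everything to iterated applications of the projective bundle theorem, with no Gysin maps of closed immersions at all. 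Your approach buys elementarity: the only nontrivial geometric input is the projective bundle formula; the bookkeeping you flag as the main obstacle does work out (the exponent on $c_1(L_{s_{i-1}+j})$ in $q^*\eta\cdot\eta_q$ is $s_{i-1}+(j-1) = (s_{i-1}+j)-1$, as required). The paper's approach buys a direct induction on the partial flag bundle itself, without passing through a cover. One detail worth making explicit in your auxiliary lemma: build the tower of projective bundles from the top of the flag downward, i.e., first choose the rank-one quotient $L_n$ of $\pi^*\EE$ via $\PP(\EE^*)$, then the rank-one quotient $L_{n-1}$ of $\EE_{n-1}$, and so on. With that choice each $c_1(L_j)$ is literally the relative hyperplane class of the stage that introduces it, with exponent $j-1$ equal to the relative dimension, so the pushforward of $\widetilde\eta$ collapses stage by stage to $1$. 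Building from the bottom, with tautological subbundles, produces $-c_1(L_j)$ and mismatched exponents, so the stages would not line up directly with the monomial $\widetilde\eta$.
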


\begin{proof}
  The degrees of the elements in the claim are equal, so assuming $X$ is
  irreducible, there is an integer $m$ so that $\pi_*\eta = m$.  In order to
  prove that $m=1$, we may replace $X$ by a non-empty affine open subset where $\EE$ is trivial.  Let
  $\EE'$ be a trivial subbundle of $\EE$ of rank $n_1+\dots+n_{r-1}$, and
  consider the flag bundle $\FF' := \Fl_{n_1,\dots,n_{r-1}}(\EE')$, its
  structure map $\pi' : \FF' \to X$, and its tautological filtration $\pi'^*
  \EE' =: \EE'_{r-1} \supseteq \EE'_{r-2} \supseteq \dots \supseteq \EE'_1
  \supseteq \EE'_0 = 0$.

  Consider the map $\lambda : \FF' \to \FF$ over $X$ defined by the filtration
  $ \EE'_r := \pi'^* \EE \supseteq \pi'^* \EE' =: \EE'_{r-1} \supseteq
  \EE'_{r-2} \supseteq \dots \supseteq \EE'_1 \supseteq \EE'_0 = 0$ of $\pi'^*
  \EE$; that means that $\lambda$ pulls back the tautological filtration on
  $\FF$ to this filtration on $\FF'$.  It is a closed immersion whose image is
  defined by the vanishing of the composite map $ \pi^* \EE' \hookrightarrow
  \pi^* \EE \twoheadrightarrow \EE_r/\EE_{r-1} $, because on that zero locus,
  the containment $\pi^* \EE' \subseteq \EE_{r-1}$, where the subbundles have
  the same rank, forces $\pi^* \EE'= \EE_{r-1}$.  The codimension is
  $(n_1+\dots+n_{r-1}) n_r$, which equals the products of the ranks of the source and target of the
  composite map, so $\lambda(\FF')$ is the complete intersection of the zero
  loci of the corresponding global sections of $\EE_r/\EE_{r-1} $, each of
  which has cycle class equal to the top Chern class
  $c_{n_r}(\EE_r/\EE_{r-1})$, by \cite[Theorem~2, p.~153]{grothendieck-1958}.  It follows that $\lambda_* 1 =
  c_{n_r}(\EE_r/\EE_{r-1})^{n_1+\dots+n_{r-1}}$.

  Let $$\eta' := c_{n_2}(\EE'_2/\EE'_1)^{n_1} c_{n_3}(\EE'_3/\EE'_2)^{n_1+n_2}
  \dots c_{n_{r-1}}(\EE'_{r-1}/\EE'_{r-2})^{n_1+\dots+n_{r-2}} \in A(\FF').$$
  By induction we may assume that $\pi'_* \eta' = 1$.

  Letting $$\tilde \eta := c_{n_2}(\EE_2/\EE_1)^{n_1}
  c_{n_3}(\EE_3/\EE_2)^{n_1+n_2} \dots
  c_{n_{r-1}}(\EE_{r-1}/\EE_{r-2})^{n_1+\dots+n_{r-2}} \in A(\FF),$$ we see
  that $\lambda^* \tilde \eta = \eta'$.

  We compute $\pi_* \eta = \pi_*((\lambda_* 1) \tilde\eta) = \pi_* \lambda_*
  \lambda^* \tilde\eta = \pi'_* \eta' = 1$, proving the claim.
\end{proof}

A consequence of the proposition is that the computation of $\pi_*$ can be
accomplished by reducing an element to normal form modulo the \Gb{} and taking
the coefficient of $\eta$, as we now show.

\begin{corollary}\label{computepilowerstar}
  Let $\zeta$ be an element of $A(\FF)$.  Using the basis provided by part 3 of Theorem
  \ref{theorem:ourtheorem} write $\zeta$ in the form $\sum_\alpha \pi^*(z_\alpha)
  \Phi(c^\alpha)$, and choose $\beta$ so $\Phi(c^\beta) = \eta$.  Then the identity $\pi_* z
  = z_\beta$ holds.
\end{corollary}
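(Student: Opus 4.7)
The plan is to expand $\pi_*\zeta$ using $A(X)$-linearity (the projection formula) and then eliminate all but one summand by a codimension argument, reducing to Proposition~\ref{proposition:integral}.

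First I would apply $\pi_*$ to $\zeta = \sum_\alpha \pi^*(z_\alpha)\Phi(c^\alpha)$ and pull the pulled-back factors through: by the projection formula,
\[
\pi_*\zeta \;=\; \sum_\alpha z_\alpha \cdot \pi_*\bigl(\Phi(c^\alpha)\bigr).
\]
So the problem is reduced to showing $\pi_*\Phi(c^\alpha)=0$ whenever $\alpha\ne\beta$ and $\pi_*\Phi(c^\beta)=\pi_*\eta=1$.

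Next I would compute the relative dimension $d$ of $\pi:\FF\to X$. Since $n=n_1+\dots+n_r$, the fibre of $\pi$ is the partial flag variety of type $(n_1,\dots,n_r)$ in an $n$-dimensional vector space, of dimension $d=\sum_{i<j}n_i n_j$. Because $X$ is nonsingular and $\pi$ is smooth and proper of pure relative dimension $d$, the map $\pi_*$ carries $A^k(\FF)$ into $A^{k-d}(X)$, which vanishes for $k<d$. The element $\Phi(c^\alpha)$ lies in $A^{|\alpha|}(\FF)$ with $|\alpha|:=\sum_{i,j}j\alpha_{ij}$, so it suffices to show $|\alpha|<d$ for every basis index $\alpha\ne\beta$, while $|\beta|=d$.

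Then I would verify that the monomial $c^\beta$ is the unique basis element of maximal degree, using the inequalities from part~(4) of Theorem~\ref{theorem:ourtheorem}. For each block $i$, one has
\[
\sum_j j\,\alpha_{ij} \;\le\; n_i\sum_j\alpha_{ij} \;\le\; n_i(n_1+\dots+n_{i-1}),
\]
and the first inequality is an equality only when all the mass is concentrated at $j=n_i$, while the second is an equality only when the block constraint is saturated. Summing over $i$ gives $|\alpha|\le d$ with equality forcing $\alpha=\beta$, where $\beta$ is the exponent vector of $\eta=c_{2,n_2}^{n_1}c_{3,n_3}^{n_1+n_2}\cdots c_{r,n_r}^{n_1+\dots+n_{r-1}}$.

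Combining these steps, all summands with $\alpha\ne\beta$ die for dimensional reasons, leaving $\pi_*\zeta=z_\beta\cdot\pi_*\eta$, and Proposition~\ref{proposition:integral} finishes the job with $\pi_*\eta=1$. The only mild obstacle is the vanishing statement for $\pi_*$ in too-small codimension; this is standard given the smoothness and properness of $\pi$, and once granted, the rest of the argument is the combinatorial identification of $\beta$ as the unique top-degree basis index.
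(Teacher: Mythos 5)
Your proposal is correct and follows essentially the same route as the paper: the projection formula to extract the coefficients $z_\alpha$, vanishing of $\pi_*\Phi(c^\alpha)$ for $\alpha\ne\beta$ by degree/dimension considerations, and Proposition~\ref{proposition:integral} to give $\pi_*\eta=1$. You merely spell out what the paper compresses into ``for dimension reasons,'' namely the count $d=\sum_{i<j}n_in_j$ and the verification (stated in the paper just before Proposition~\ref{proposition:integral}) that $c^\beta$ is the unique basis monomial of top degree.
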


\begin{proof}
  Observe that $\pi_* \Phi ( c^\alpha ) = 0$ for all $\alpha \ne \beta$, for
  dimension reasons.  We compute $\pi_* z = \pi_* (\sum_\alpha \pi^*(z_\alpha)
  \Phi(c^\alpha) = \sum_\alpha \pi_* ( \pi^*(z_\alpha) \Phi(c^\alpha) ) =
  \sum_\alpha z_\alpha \pi_* ( \Phi(c^\alpha) ) = z_\beta$.
\end{proof}

\section{Examples}

\begin{example}[Lines in $\PP^3$]
  For a first example, we show how the number of lines meeting 4 lines in space is computed using the algorithm developed above.  Let $\FF = \Fl_{2,2}(k^4)$ be the Grassmannian of
  lines in projective 3-space.  The intersection ring $A(\FF)$ of $\FF$ is
  isomorphic to
  $\PF_{2}(x^4)$.  Write $g(x) = x^2 + c_1 x + c_2$.  The equations defining
  $\PF_{2}(x^4)$ are the coefficients of $x^4 \quo g(x)$, and so $A(\FF) \cong \ZZ[c_1,c_2]/I$,
  where
  $I = (-c_1^3+2c_1c_2, -c_1^2c_2 + c_2^2)$.  In fact, by Proposition \ref{minors} or Corollary \ref{meatyprop:Gb}, a \Gb{} for $I$ is given by the maximal minors of the matrix
  $$\begin{bmatrix}
  1 & 1 & 0 & 0 \\
  0 & c_1 & 1 & 0\\
  0 & c_2 & c_1 & 1\\
  0 &  0 & c_2 & c_1 \\
  0 & 0 &  0 & c_2 
  \end{bmatrix},$$
  i.e., the polynomials $\{c_1^3-2c_1c_2, c_1^2c_2-c_2^2,  c_1c_2^2,  c_2^3\}$, where we have written leading terms first. Thus 
  reduction modulo the \Gb{} of $I$
  amounts to the following rewriting rules.
  \begin{align*}
    c_1^3 &\mapsto 2c_1c_2 \\
    c_1^2c_2 &\mapsto c_2^2 \\
    c_1c_2^2 &\mapsto 0 \\
    c_2^3 &\mapsto 0
  \end{align*}
  This is the \Gb{} for any monomial ordering such that $c_1 > c_2$.  In terms of
  geometry, $c_1$ is the class of the subset of (projective) lines meeting a
  given line, 
  and $c_2$
  is the class of the subset of lines lying in a plane.  The class $c_2^2$ is the
  class of a single line, i.e., a point in $\FF$, by Proposition \ref{proposition:integral}.

  Now let's compute the number (two) of lines meeting 4 general lines in space.
  It is $\int c_1^4$ and can be computed by taking the coefficient of $c_2^2$ in
  the normal form of $c_1^4$, as follows.
  $$\int c_1^4 = \int 2c_1^2c_2 = \int 2c_2^2 = 2.$$

  Alternatively, we could split $g$ into two linear forms, as in Section
  \ref{section:another-proof}.  Geometrically, we are moving to a flag bundle over $\FF$.  Write
  $x^2 + c_1 x + c_2 = (x + t)(x + u)$ and postulate that it divides $x^4$.
 Since $c_2=tu$, the class of a point is $c_2^2 = t^2 u^2$. We now construct the \Gb{} by composition as in our first proof of Proposition \ref{proposition:meatyprop} in section 2. First we construct 
 \[B_1=\PF_{1}(f)=A[u]/\langle f\rem (x+u)\equiv 0\rangle.
 \] A \Gb{} for the ideal defining this quotient is given by Lemma \ref{lemma:onelinear} as $\{f(-u)=u^4\}$. Next we construct a \Gb{} for the ideal defining 
 \[B_2=B_1[t]/\langle f(x)//(x+u) \rem (x+t) \equiv 0\rangle.\]
  Setting $q(x):=f(x)//(x+u)=x^3-ux^2+u^2x-u^3\in B_1$, a \Gb{} for the ideal $\langle f(x)//(x+u) \rem (x+t) \equiv 0\rangle$ is given by $q(-t)=-t^3-ut^2-u^2t-u^3$, again by Lemma \ref{lemma:onelinear}. Thus by composition a monic \Gb{} for the ideal defining $\PF_2(f)$ as a quotient of $A[u,t]$ is given by $\{t^3+ut^2+u^2t+u^3,u^4\}$. 
Reduction modulo the resulting \Gb{} given by the second proof of Proposition \ref{proposition:meatyprop}  given in section 4 of this paper amounts to the following rewriting rules.
  \begin{align*}
    u^4 &\mapsto 0 \\
    t^3 &\mapsto - t^2 u - t u^2 - u^3
  \end{align*}
  We compute the desired number as follows.
  \begin{align*}
        \int c_1^4
    & = \int (t+u)^4
      = \int (t^3 + 3 t^2 u + 3 t u^2 + u^3)(t+u)
      = \int (2 t^2 u + 2 t u^2) (t + u) \\
    & = \int 2 t u (t+u)^2 
      = \int 2 (t^3 + 2 t^2 u + t u^2) u 
      = \int 2 ( t^2 u - u^3 ) u \\
    & = \int 2 ( t^2 u^2 - u^4 ) 
      = 2 \int t^2 u^2
      = 2.
  \end{align*}
\end{example}

\begin{example}\label{gr6:10}
Suppose that $f \in A[x]$ is monic of degree $n$, and suppose that $1 \leq m
\leq n-1$.  Since $\PF_{m}(f)$ and $\PF_{n-m}(f)$ are isomorphic, one could ask
which one is the best to use in computations.  For example, how large are the
corresponding \Gbs?  We compute the following examples using {\em Macaulay2}.

If $n = 10$, and $m = 6$, then the \Gb{} for the ideal (in 6 variables) defining
$\PF_{6}(x^{10})$ has 252 generators (in 6 variables $c_1, \dots, c_6$), and
$\l(I) = (c_1, \dots, c_6)^5$.  The total number of monomials in an auto-reduced
\Gb{} of $I$ is 1458.  The maximum number of monomials occuring in a \Gb{} element is
18. 

By contrast, the \Gb{} for the ideal (in 4 variables) defining the isomorphic
ring $\PF_{4}(x^{10})$ has 120 generators (in 4 variables $c_1, \dots, c_4$),
and $\l(I) = (c_1, \dots, c_4)^7$.  The total number of monomials in an auto-reduced
\Gb{} of $I$ is 773.  The maximum number of monomials occuring in a \Gb{} element is
17.
\end{example}

\begin{example}
  In 2009, at MSRI, O. Debarre and C. Voisin asked us the following question
  \cite{debarre:voisin}.  Let $\FF = \Fl_{6,4}(k^{10})$ be the Grassmannian of
  6-planes in a 10-dimensional vector space over a field $k$.  Let $S$ be the
  tautological subbundle of $\OO_\FF^{10}$, and let $Z \subseteq \FF$ be a general
  section of the vector bundle $\Lambda^3 S^*$.  As this is a rank 20 bundle,
  and $\dim \FF = 24$, $Z \subseteq \FF$ has dimension 4.  As it turns out, $Z$ is
  also non-singular, and $\OO_Z$ is resolved by the Koszul complex with terms
  $\Lambda^i E$.  The question they asked was to compute $\chi (\OO_Z) = \Sigma
  (-1)^i \chi (\Lambda^i E).$

  The intersection ring $A(\FF)$ is the one considered in Example \ref{gr6:10}.
  Computing with the {\em Macaulay2} package {\em Schubert2}, using the \Gb{}
  defining $A(\FF)$, one finds that $\chi(\OO_Z) = 3$.  The computation proceeds
  by using the Hirzebruch-Riemann-Roch Theorem to convert the computation of the
  Euler characteristic to the computation of an integral, which in turn is
  accomplished using the algorithm described in this paper.

  Debarre and Voisin were then able to deduce that $Z$ defines an irreducible
  symplectic manifold, for details, see \cite{debarre:voisin}. Basically, by
  adjunction, one knows that $K_Z = 0$, and then by a theorem of
  Beauville-Bogomolov, $Z$ has a finite \'etale cover $Y$, of some degree $m$,
  which is a product of Abelian varieties (having $\chi(\OO) = 0$), Calabi-Yau
  manifolds ($\chi(\OO) = 2$), and irreducible symplectic varieties ($\chi(\OO)
  = 3$).  Since $\chi(\OO_Y) = m \chi(\OO_Z) = 3m$, one deduces eventually that
  $m=1$, and that $Z$ is a new example of an irreducible symplectic $4$-fold.
\end{example}

\section{The isotropic flag bundle}\label{section:isotropic-flag-bundle}

In this section we prove an analogue of Proposition \ref{theorem:ourtheorem}
for the intersection ring of isotropic flag bundles (and Grassmannians),
thereby enabling convenient computations of intersection theory on them.

Suppose that $\EE$ is a vector bundle of rank $2n$ on a nonsingular variety $X$
equipped with a nonsingular alternating (symplectic) form, i.e., it satisfies
the identity $\langle e, e \rangle = 0$ for any section $e$, implying also that
$\langle e, e' \rangle = - \langle e', e \rangle$.  The isomorphism $\EE \cong
\EE^*$ gives an equation of total Chern classes $c(\EE) = c^*(\EE)$, where
$c^*(\EE) := c(\EE^*) = \sum_i (-1)^i c_i(\EE)$.  That implies that $c_i(\EE)$
is $2$-torsion when $i$ is odd, but more is true, as the following lemma shows.

Let $\LL$ be a subbundle of $\EE$,  i.e., the quotient sheaf $\EE/\LL$ is locally free.
   Restriction to $\LL$ gives a surjective map $\EE^* \to \LL^*$ because the inclusion
$\LL \to \EE$ is locally split. Since $X$ is nonsingular, there is an isomorphism $\EE\cong \EE^*$. We define  $\LL^\perp$ to be the kernel of the composite map $\EE \to \LL^*$, i.e., the subbundle consisting of the vectors in the orthogonal complement of $\LL$ with respect to the alternating form.  It is a subbundle because the restriction map utilized in this construction is surjective.  

\begin{lemma}
  Suppose that $\EE$ is a vector bundle on a nonsingular variety $X$ equipped
  with a nonsingular alternating form.  Then $c_i(\EE) = 0$ if $i$ is odd.
\end{lemma}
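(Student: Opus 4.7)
The plan is to prove the lemma by the splitting principle adapted to the symplectic setting. The relation $c(\EE) = c^*(\EE)$ already shows that $c_i(\EE)$ is $2$-torsion for odd $i$, which is too weak; we need to actually produce a factorization of $c(\EE)$ into even-degree pieces.

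First I would pass to the complete isotropic flag bundle $\pi : \FF \to X$ associated to $\EE$, the variety parametrizing complete isotropic flags $0 \subset \EE_1 \subset \EE_2 \subset \dots \subset \EE_n$ in $\EE$ with $\rank \EE_i = i$ and $\EE_n$ Lagrangian.  (Even though the isotropic flag bundle has not yet been formally introduced in the paper, its existence as a smooth projective $X$-scheme is standard.) The point of this step is that $\pi^*$ is injective on intersection rings: $A(\FF)$ is a free $A(X)$-module because $\FF$ is iteratively built from flag bundles and isotropic Grassmann bundles over $X$, each of which yields a free module structure by Theorem \ref{groththm} and its isotropic analogue to be proved shortly. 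Thus it suffices to show that $c_i(\pi^* \EE) = 0$ for $i$ odd.

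Next I would use the symplectic form on $\pi^* \EE$ to identify $\EE_{i-1}^\perp / \EE_i^\perp \cong (\EE_i/\EE_{i-1})^*$ for each $i$, which extends the tautological isotropic filtration to a full filtration
\[
0 \subset \EE_1 \subset \dots \subset \EE_n = \EE_n^\perp \subset \EE_{n-1}^\perp \subset \dots \subset \EE_0^\perp = \pi^*\EE
\]
whose successive line-bundle quotients are $L_1,\dots,L_n,L_n^*,\dots,L_1^*$, where $L_i := \EE_i/\EE_{i-1}$.  Setting $x_i := c_1(L_i)$, the Chern roots of $\pi^*\EE$ are therefore $x_1,\dots,x_n,-x_n,\dots,-x_1$, and the Whitney sum formula gives
\[
c(\pi^* \EE) \;=\; \prod_{i=1}^n (1+x_i)(1-x_i) \;=\; \prod_{i=1}^n (1 - x_i^2),
\]
which is a polynomial in the $x_i^2$ and hence has only even-degree homogeneous components. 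Reading off the degree-$i$ part for $i$ odd gives $c_i(\pi^*\EE) = 0$, and injectivity of $\pi^*$ yields $c_i(\EE) = 0$.

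The main obstacle I expect is not the algebraic computation but the geometric setup: verifying that the symplectic form descends to the canonical duality $\EE_{i-1}^\perp / \EE_i^\perp \cong L_i^*$ on $\FF$, and checking that the isotropic flag bundle really gives injectivity of $\pi^*$ on intersection rings. The second point follows once the structure theorem for the intersection ring of isotropic flag bundles (which is the main result of the section) is in hand, so the lemma should really be viewed as a preliminary fact justified by the splitting principle; the first point is a routine bilinear-algebra computation carried out pointwise and then globalized.
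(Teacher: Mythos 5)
Your proof is correct, but it takes a genuinely different route from the paper's. You pass once and for all to the complete isotropic flag bundle and invoke a symplectic splitting principle: the symplectic form converts the isotropic flag into a full filtration with line-bundle quotients $L_1,\dots,L_n,L_n^*,\dots,L_1^*$, so the Chern roots come in pairs $\pm x_i$ and $c(\pi^*\EE)=\prod(1-x_i^2)$ has no odd part. The paper instead passes only to the single projective bundle $\rho:\PP(\EE^*)\to X$, uses $\LL\subseteq\LL^\perp$ and $\rho^*\EE/\LL^\perp\cong\LL^*$ to factor $\rho^*c(\EE)=c(\LL)c^*(\LL)\,c(\LL^\perp/\LL)$, observes $c(\LL)c^*(\LL)=1-c_1(\LL)^2$ is even, and inducts on the rank of $\EE$ applied to $\LL^\perp/\LL$. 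The two arguments encode the same idea (the form pairs off $\LL$ with $\LL^*$), but the paper's version is more economical: it needs only Grothendieck's projective bundle theorem and a rank induction, whereas yours constructs the full isotropic flag tower up front. One thing to be careful about: you appeal to ``Theorem \ref{groththm} and its isotropic analogue to be proved shortly'' to get injectivity of $\pi^*$, but the isotropic analogue (Theorem \ref{theorem:isotropicgroth}) logically comes \emph{after} this lemma in the paper and indeed presupposes it (the vanishing of odd $c_i$ is what makes it meaningful to take $f$ with only even-degree terms). You do not actually need that theorem, though: the complete isotropic flag bundle is an iterated tower of ordinary projective bundles over $X$ (a line is automatically isotropic under an alternating form, and the remaining refinements are ordinary flag refinements inside fixed subquotients), so injectivity of $\pi^*$ follows from Theorem \ref{groththm} alone. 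Phrased that way, your argument is clean and non-circular.
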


\begin{proof}
  Let $\rho : \PP(\EE^*) \to X$ be the structure map for $\PP(\EE^*)$.  Consider
  the tautological line bundle $\LL \subseteq \rho^* \EE$.  Since the form is
  alternating, $\LL \subseteq \LL^\perp$.  Taking into account the isomorphism
  $\rho^* \EE / \LL^\perp \cong \LL^*$ and using multiplicativity of the total Chern
  class, we see that $\rho^* c(\EE) = c(\LL) c^*(\LL) c(\LL^\perp/\LL)$.  Since
  $\LL^\perp/\LL$ inherits a nonsingular alternating form, we may assume, by
  induction on the rank of $\EE$, that its total Chern class vanishes in odd
  degree.  Since $c(\LL) c^*(\LL)$ also vanishes in odd degree, so does $\rho^*
  c(\EE)$, and hence so does $c(\EE)$, by injectivity of $\rho^*$.
\end{proof}

Given $r, n_1, \dots, n_r$ consider the {\em isotropic flag bundle} $\FF =
\Fl^\iso_{n_1,\dots,n_r}(\EE)$ over $X$ which parametrizes the filtrations of
$\EE$ over points of $X$ by totally isotropic subbundles and their annihilators, where the successive
quotients from the filtration have ranks $n_r, \dots, n_1, 2(n - n_1 - \dots - n_r), n_1, \dots,
n_r$.  Let $\pi_\FF$ denote the structure map $ \FF \rightarrow X $.  The
tautological filtration $\calF(\FF)$ on $\FF$ has the form 
$$\calF(\FF) := \left
( \pi_\FF^* \EE = \calF^r(\FF) \supseteq \dots \supseteq \calF^0(\FF) \supseteq
\calF_0(\FF) \supseteq \dots \supseteq \calF_r(\FF) = 0 \right ),$$ 
where each
$\calF_i(\FF) := \calF^i(\FF)^\perp$ is totally isotropic and $\rank
(\calF^i(\FF) / \calF^{i-1}(\FF)) = r_i$, for each $i$.  Each $\calF^i(\FF)$ is
     {\em totally coisotropic}, in the sense that its annihilator
     $\calF_i(\FF)$ is totally isotropic.

We introduce the following additional notation.
\begin{align*}
    \calF^i & := \calF^i(\FF) \\
    \calF_i & := \calF_i(\FF) \\
    \calF^{i,j} := \calF^{i,j}(\FF) & := \calF^{i}(\FF)/\calF^{j}(\FF) \\
    \calF_{j,i} := \calF_{j,i}(\FF) & := \calF_{j}(\FF)/\calF_{i}(\FF) \\
    \calF^i_j := \calF^i_j(\FF) & := \calF^{i}(\FF)/\calF_{j}(\FF) 
\end{align*}

Additivity of total Chern classes and the isomorphisms $\calF^{i,i-1}
\cong \calF_{i-1,i}^*$ induced by the alternating form provide the
following identity of total Chern classes.
\[
 \pi_\FF^*c(\EE) = c(\calF^{r,r-1})\cdot\ldots\cdot c(\calF^{1,0})
 \cdot c(\calF^0_0) \cdot c^*(\calF^{1,0})\cdot\ldots\cdot
 c^*(\calF^{r,r-1})
\]

In order to model such factorizations abstractly, we let $A$ be a commutative
ring and consider a monic polynomial $f(x) = x^{2n} + b_2 x^{2n-2} + \dots +
b_{2n-2} x^2 + b_{2n} \in A[x]$ of degree $2n$ with no terms of odd degree,
playing the role of $c_x(\EE)$.

For a monic polynomial $g(x)$ of degree $m$, we define $g^*(x) := (-1)^m
g(-x)$, so $g^*(x) = c_x(\DD^*)$ if $g(x) = c_x(\DD)$ for a vector bundle
$\DD$ of rank $m$.

We consider divisibility of $f(x)$ by a product $g_r(x) \dots g_1(x) g_1^*(x)
\dots g_r^*(x)$, where $g_i(x) = x^{n_i} + c_{i,1} x^{n_i-1} + \dots +
c_{i,n_i-1} x + c_{i,n_i}$, and where $g_i(x)$ plays the role of
$c_x(\calF^{i,i-1})$.  The universal example of such a
tuple $(g_1,\dots,g_r)$ is the one whose coefficients are in the quotient ring
$\PF^{\rm iso}_{n_1,\dots,n_r}(f) := S/J$, where $$S =
A[c_{1,1},\dots,c_{1,n_1};\dots;c_{r,1},\dots,c_{r,n_r}]$$ and $$J = \langle
f(x) \rem g_1(x) \dots g_r(x) g_r^*(x) \dots g_1^*(x) \equiv 0 \rangle.$$ We
let $c_{i,j}$ have degree $j$ and order the monomials of $S$ with the block
ordering $[c_{1,1},\dots,c_{1,n_1}] \gg \dots \gg [c_{r,1},\dots,c_{r,n_r}]$
arising from the graded reverse lex ordering with $c_{i,1} > \dots > c_{i,n_i}$
on each block.

\begin{theorem}\label{theorem:isotropicgroth}
  The map $ \ph : \PF^\iso_{n_1,\dots,n_r}(c_x(\EE)) \to A(\Fl^\iso_{n_1,\dots,n_r}(\EE))$ arising from universality and the factorization
  of total Chern classes above, sending $g_i(x)$ to
  $c_x(\calF^{i,i-1})$, is an isomorphism.
\end{theorem}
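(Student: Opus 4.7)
The plan is to prove Theorem~\ref{theorem:isotropicgroth} by an inductive tower construction that mirrors the strategy of Section~\ref{section:unifac}, in which Grothendieck's Theorem~\ref{groththm} is combined with the analysis of a single factor to yield a general factorization result. The key geometric observation is that, because the form on $\EE$ is alternating, any line $\LL \subseteq \EE$ satisfies $\LL \subseteq \LL^\perp$; every line is therefore automatically isotropic, and the isotropic Grassmannian of lines in a symplectic bundle of rank $2n$ coincides with the full projective bundle $\PP(\EE)$. Moreover $\LL^\perp/\LL$ inherits a nondegenerate alternating form and is thus itself a symplectic bundle of rank $2n-2$ on $\PP(\EE)$; this supplies the step of the induction.

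First I would dispose of the base case $r=1$, $n_1 = 1$. By definition $\PF^\iso_1(f) = A[c_{1,1}]/\langle f \rem (x^2 - c_{1,1}^2) \rangle$; since $f$ is even, the remainder collapses to the constant $f(c_{1,1})$, so $\PF^\iso_1(f) = A[c_{1,1}]/(f(c_{1,1}))$. On the geometric side the projective bundle formula gives $A(\PP(\EE)) = A(X)[\xi]/(f(\xi))$, and the two presentations are identified under $c_{1,1} \leftrightarrow \xi = c_1(\calF^{1,0})$, using that $f$ is even.

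Next I would construct the full isotropic flag bundle $\Fl^\iso_{1,\dots,1}(\EE)$ with $m = n_1 + \dots + n_r$ ones as a tower of $m$ projective bundles, at the $k$-th stage projectivising the current residual symplectic bundle of rank $2(n-k+1)$. The matching algebraic tower iterates the base case applied to the monic even residual Chern polynomial $h_k$ satisfying $f = (g_1 g_1^* \cdots g_{k-1} g_{k-1}^*)\, h_k$ in the quotient so far; Proposition~\ref{composite} composes the successive Gröbner bases, and the geometric projective bundle formula matches the algebraic step at each stage, yielding the isomorphism in the all-ones case.

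Finally, for general $(n_1,\dots,n_r)$ I would use the ordinary (non-isotropic) refinement map $\Fl^\iso_{1,\dots,1}(\EE) \to \Fl^\iso_{n_1,\dots,n_r}(\EE)$, whose fibers are products of ordinary full flag varieties on the blocks $\calF^{i,i-1}$ (the corresponding refinements of the dual blocks $\calF_{i-1,i}$ being forced by the symplectic pairing). Grothendieck's Theorem~\ref{groththm} describes the intersection ring of the source as a tower over that of the target, and algebraically $\PF^\iso_{1,\dots,1}(f)$ is the analogous tower over $\PF^\iso_{n_1,\dots,n_r}(f)$ obtained by refining each $g_i$ into $n_i$ linear factors. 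A rank comparison together with the already-established isomorphism in the all-ones case then yields the desired isomorphism. The main obstacle is precisely this last step: one must identify the partial-flag isotropic intersection ring as exactly the subring arising from the refinement map (no more, no less), verify that the symplectic factor indexing conventions align with the geometric filtration $\calF^r \supseteq \cdots \supseteq \calF^0 \supseteq \calF_0 \supseteq \cdots \supseteq \calF_r$, and check that the Gröbner bases produced by iterated application of Proposition~\ref{composite} recover $\PF^\iso_{n_1,\dots,n_r}(f)$ on the nose.
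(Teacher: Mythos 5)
Your plan follows essentially the same tower-of-projective-bundles strategy the paper uses: identify the isotropic flag bundle with an iterated projective bundle (using the observation that any line in a symplectic bundle is automatically isotropic), match each geometric projectivization with the corresponding algebraic extension by a monic relation, and transfer the isomorphism from the all-ones case to the general case by a further chain of projective-bundle refinements. That is exactly the paper's organization; it even uses the same two kinds of refinement steps (adding a new inner block of size~$1$ by projectivizing the residual symplectic bundle $\calF^0_0$, and splitting a block $n_i$ by projectivizing $\calF^{i,i-1}$), except that the paper starts its induction from the empty tuple rather than from $(1)$.

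The problem is precisely the step you flag as ``the main obstacle.''  Your proposed remedy --- a rank comparison between the all-ones ring and the partial-flag ring --- is not enough by itself, because a rank count only shows that $\ph'$ restricts to an injection on $\PF^\iso_{n_1,\dots,n_r}(f)$; it does not give surjectivity onto $A(\Fl^\iso_{n_1,\dots,n_r}(\EE))$.  What makes the descent work in the paper is the stronger observation that, at every single refinement step, the algebraic extension and the geometric extension are presented by \emph{the same} monic relation (adjoining a root of $g_i$ on one side and a root of $c_x(\calF^{i,i-1})$ on the other, or a root of the residual even polynomial $h$ on one side and of $c_x(\calF^0_0)$ on the other), so that the corresponding monomial bases $1,\alpha,\dots,\alpha^{d-1}$ match up under the vertical map.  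Consequently $\ph'$ is a finite direct sum of copies of $\ph$ as a map of abelian groups, and hence $\ph'$ is an isomorphism if and only if $\ph$ is.  This ``iff'' lemma, not a rank count, is what lets you go both up the tower (from the empty tuple to $(1,\dots,1)$) and back down (from $(1,\dots,1)$ to $(n_1,\dots,n_r)$).  A secondary point: the appeal to Proposition~\ref{composite} and Gr\"obner-basis composition is unnecessary here --- that machinery is used in the paper to describe $\l(J)$ in Theorem~\ref{theorem:ourtheorem2}, whereas the isomorphism $\ph$ only needs the weaker module-theoretic fact that each stage is a free extension whose basis is compatible with the comparison map.
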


\begin{proof}
  We use Grothendieck's method \cite[Theorem 1, p.~4-19]{grothendieck-1958} for
  the Grassmannian, adapted to the isotropic case.  The idea is to construct a
  proof by induction via slight refinements of the tuples $(n_1,\dots,n_r)$
  that correspond to projective bundles on the geometry side and to ring
  extensions defined by one monic relation in one new variable on the algebra
  side.  A refinement of type A will involve
  replacing ${\bf n}$ by ${\bf n'} =
  (n_1,\dots,n_{i-1},n_i-1,1,n_{i+1},\dots,n_r)$,
  and one of type B replaces ${\bf n} =
  (n_1,\dots,n_r)$ by ${\bf n'} = (1,n_1,\dots,n_r)$.
  In each case, the forgetful
  maps give a commutative square
  \[\xymatrix{
    \PF^\iso_{\bf n'}(c_x(\EE)) \ar[r]^{\ph'} & A(\Fl^\iso_{\bf n'}(\EE)) \\
    \PF^\iso_{\bf n}(c_x(\EE)) \ar[u] \ar[r]^\ph & A(\Fl^\iso_{\bf n}(\EE)) \ar[u]
   }
  \]

  Consider first a refinement of type A.  On the left side the extension is
  defined by adjoining a root of the monic polynomial $g_i(x)$, which provides
  a linear factor and a complementary factor of degree $n_i-1$.  On the right
  side, the bundle $ \pi' : \Fl^\iso_{\bf n'}(\EE) \to \Fl^\iso_{\bf n}(\EE)$
  is isomorphic to the projective bundle of $\calF^{i,i-1}(\Fl^\iso_{\bf
    n}(\EE))$, which filters the pull-back of $\calF^{i,i-1}(\Fl^\iso_{\bf
    n}(\EE))$ by subquotient bundles of ranks $n_i-1, 1$.

  Consider next a refinement of type B.  On the left side the extension is
  defined by adjoining a root $\alpha$ of the monic polynomial $$h(x) := f(x)
  \quo g_1(x) \dots g_r(x) g_r^*(x) \dots g_1^*(x),$$ which provides a linear
  factor $g_0(x) = x - \alpha$ of $h(x)$.  Since $h(x)$ has no terms of odd
  degree, we may think of it as a polynomial in  $x^2$ with a
  ``root'' $\alpha^2$ and conclude that it has a factor $x^2 -
  \alpha^2 = (x - \alpha) (x + \alpha) = g_0(x) g_0^*(x)$, as needed.  On the
  right side, the bundle $\Fl^\iso_{\bf n'}(\EE) \to \Fl^\iso_{\bf n}(\EE)$ is
  isomorphic to the projective bundle $ \PP(\calF^0_0(\Fl^\iso_{\bf n}(\EE)))
  \to \Fl^\iso_{\bf n}(\EE)$, because any rank $1$ subbundle of a vector bundle
  equipped with an alternating form is totally isotropic.

  It follows from Grothendieck's theorem \ref{groththm} that for a projective
  bundle $\PP(\DD) \to X$ associated to a vector bundle $\DD$, the ring
  extension $A(X) \to A(\PP(\DD))$ is defined by adjoining a root $\alpha := -
  c_1(\LL)$ of the monic polynomial $c_x(\DD)$, where $\LL$ is the tautological
  line bundle.  The roots and equations on the left and right sides of the
  commutative square above correspond, so $\ph'$, as a map of abelian groups,
  is isomorphic to a direct sum of copies of $\ph$.  We conclude that $\ph'$ is
  an isomorphism if and only if $\ph$ is.

  Now we construct our proof by induction, starting from the base case where
  ${\bf n}$ is empty.  Refine it $n_1 + \dots + n_r$ times by refinements
  of type A to reach $(1,1,\dots,1)$, which can also be reached from
  $(n_1,\dots,n_r)$ by several refinements of type B, finishing the proof.
\end{proof}

\begin{definition}
  Let $I$ be a monomial ideal in a polynomial ring.  We let $\Sq I$ denote the
  monomial ideal generated by the squares of the monomials in $I$.
\end{definition}

Now we prove the analogue of Proposition \ref{proposition:meatyprop}.

\begin{proposition}\label{proposition:meatyprop2}
  Let $f(x) = x^{2n} + b_2 x^{2n-2} + \dots + b_{2n-2} x^2 + b_{2n} \in A[x]$
  be a monic polynomial of degree $2n$ with no terms of odd degree.  Take $m
  \in \NN$ with $m \le n$, and let $S/J = \PF_m^\iso(f)$.  Then 
  $\l(J) = \Sq ( \langle c_{11}, \dots, c_{1m} \rangle^{n-m+1} )$.
\end{proposition}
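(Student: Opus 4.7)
The plan is to parallel the proof of Proposition \ref{proposition:meatyprop}, converting the isotropic factorization into an ordinary one by exploiting the evenness of $f$ and $g_1 g_1^*$. Since $g_1^*(x) = (-1)^m g_1(-x)$, the product $g_1(x) g_1^*(x)$ is an even polynomial in $x$; I write it as $h(x^2)$ with $h(y) = y^m + d_1 y^{m-1} + \dots + d_m \in S[y]$, and $f(x) = F(x^2)$ with $F(y) = y^n + b_2 y^{n-1} + \dots + b_{2n}$. Polynomial division of an even polynomial by an even polynomial produces an even remainder, so $J$ is in fact generated by the coefficients of $F(y) \rem h(y)$, where the $d_k \in S$ are specific polynomials in the $c_{1j}$.

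Using the monic morphism $\ph : S \to A[t_1,\dots,t_m]$ of Remark \ref{remark:monic} sending $c_{1i}$ to the elementary symmetric polynomial $e_i(t)$, I obtain $\ph(g_1 g_1^*) = \prod_j (x^2 - t_j^2)$ and therefore $\ph(d_k) = (-1)^k e_k(t_1^2, \dots, t_m^2)$. The lead monomial of the latter in grev lex on $A[t]$ is $t_1^2 \cdots t_k^2 = \tph(c_{1k}^2)$, so injectivity of $\tph$ gives $\lm(d_k) = c_{1k}^2$. With this identification in hand, the inclusion $\l(J) \supseteq \Sq(\langle c_{11},\dots,c_{1m}\rangle^{n-m+1})$ is nearly immediate: Proposition \ref{proposition:meatyprop}, applied to $F$ of degree $n$ and the monic degree-$m$ factor $h \in A[d_1,\dots,d_m][y]$ with $\deg d_i = i$, supplies for each $\beta$ with $\beta_1+\dots+\beta_m = n-m+1$ a \Gb{} element $q_\beta \in J_0 := \langle F \rem h \equiv 0 \rangle$ with lead term $d^\beta$. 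The ring map $\psi : A[d] \to S$ defined by $d_i \mapsto d_i(c)$ is monic in the sense of Definition \ref{definition:monicmap} (up to signs in lead coefficients, which is irrelevant for monomial computations), since its monoid part $d^\beta \mapsto (c^\beta)^2$ is injective, so $\psi(q_\beta) \in J$ has lead term $\pm (c^\beta)^2$.

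For the reverse inclusion I plan to compare Hilbert series, which is the principal obstacle. I reduce to $A = F$ a field via Lemma \ref{lemma:gbfields} and promote the $b_{2i}$ to variables via Lemma \ref{lemma:specialization}, working over the universal base $A' = F[b_2, \dots, b_{2n}]$ with the block ordering $[c] \gg [b]$. The key structural fact to establish, which I expect to be the main difficulty, is that $S = A'[c]$ is a free module of rank $2^m$ over $A'[d]$, with basis the squarefree monomials $\{c_{11}^{\epsilon_1} \cdots c_{1m}^{\epsilon_m} : \epsilon_i \in \{0,1\}\}$; generation follows from iterating the rewrite rule $c_{1k}^2 \equiv \pm d_k \pmod{\langle c_{11},\dots,c_{1,k-1}\rangle}$ to reduce exponents modulo $2$, while $A'[d]$-linear independence is forced by the Hilbert series identity $H(A'[c],T)/H(A'[d],T) = \prod_{j=1}^m (1-T^{2j})/(1-T^j) = \sum_\epsilon T^{|\epsilon|}$. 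Granted this freeness, $J = J_0 \cdot S$ yields $S/J \cong A'[c] \otimes_{A'[d]} A'[d]/J_0$, so $H_F(S/J, T) = \prod_i (1+T^i) \cdot H_F(A'[d]/J_0, T)$; Proposition \ref{proposition:meatyprop} with the doubled grading $\deg d_i = 2i$, after telescoping, gives $\prod_{j=1}^m 1/(1-T^j) \cdot \prod_{i=1}^{n-m} 1/(1-T^{2i})$. A parallel direct count of monomials outside $\Sq(\langle c_{11},\dots,c_{1m}\rangle^{n-m+1})$, via the splitting $\alpha_i = 2\gamma_i + \epsilon_i$ together with Lemma \ref{countmonoms}, produces the same series. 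Combining with the equality $H_F(S/J, T) = H_F(S/\l(J), T)$ from Definition \ref{definition:redI} and the previously-established inclusion $\l(J) \supseteq \Sq(\langle c_{11},\dots,c_{1m}\rangle^{n-m+1})$, the desired equality follows.
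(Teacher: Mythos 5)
Your proposal follows essentially the same skeleton as the paper's proof: pass to the even polynomials $f^{(2)}(y)$ and $h(y)$ with $f = f^{(2)}(x^2)$ and $g g^* = h(x^2)$, use the resulting map $A[d]\to S$ (your $\psi$) doubling degrees, identify $\lm(d_j) = c_{1j}^2$ so that the Gr\"obner basis elements of $\PF_m(f^{(2)})$ supplied by Proposition~\ref{proposition:meatyprop} push forward to elements of $J$ with lead terms the squares $c^{2\beta}$, pass to a universal base over a field via Lemmas~\ref{lemma:specialization} and \ref{lemma:gbfields}, and finish by a Hilbert series comparison. Two points differ. The identification $\lm(d_j) = c_{1j}^2$ is obtained by you via the elementary symmetric morphism of Remark~\ref{remark:monic}; the paper shortcuts this by directly observing that $c_j^2 > c_{j-t}c_{j+t}$ in the degree-weighted grevlex order. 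More substantially, for the side $H_F(S/J,T)$ of the comparison you propose to prove that $A'[c]$ is free of rank $2^m$ over $A'[d]$ with basis the squarefree monomials and then deduce $H_F(S/J,T)$ by base change; this is true, but as you anticipate it is the hard part, and the generation argument as sketched (iterating $c_k^2 \equiv \pm d_k \bmod \langle c_{1},\dots,c_{k-1}\rangle$) is not yet a complete proof, since the cross terms $c_{k-t}c_{k+t}$ appearing in $d_k(c)$ require a careful induction or a Gr\"obner basis argument for the graph ideal $\langle d_k - d_k(c)\rangle$. The paper bypasses this entirely by the same elimination trick used in Proposition~\ref{proposition:meatyprop}: the low-order coefficients $b_{2n-2m+2},\dots,b_{2n}$ each occur as an isolated linear term in exactly one generator of $J$, so eliminating them kills all relations, giving $S/J \cong F[b_2,\dots,b_{2n-2m},c_1,\dots,c_m]$ and its Hilbert series for free. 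Your monomial count on the other side $H_F(S/\Sq(\langle c\rangle^{n-m+1}),T)$, via the splitting $\alpha = 2\gamma + \epsilon$, matches the paper's exactly. If you retain your route, replace the freeness lemma with the direct elimination of the $b$'s; otherwise you need to supply a genuine proof of freeness.
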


\begin{proof}
  We introduce the notation $g(x) := g_1(x)$ and $c_j := c_{1j}$ for each $j$.

  Letting $f^{(2)}(x) = x^{n} + b_2 x^{n-1} + \dots + b_{2n-2} x + b_{2n} \in
  A[x]$, we see that $f(x) = f^{(2)}(x^2)$.  Let $A[d_1,\dots,d_m]/I =
  \PF_m(f^{(2)})$, so that $\deg d_j = j$ and $h(x) := x^{m} + d_1 x^{m-1} +
  \dots + d_m$ is the universal monic divisor of $f^{(2)}$ of degree $m$.
  Observe that the coefficients of odd powers of $x$ in $g(x)g^*(x)$ vanish.
  Postulating that $g(x)g^*(x) = h(x^2)$ defines a map $\ph : A[d] \to S$ of
  $A$-algebras with $\ph(d_j) = \pm ( \dots - c_{j-1}c_{j+1} + c_j^2 -
  c_{j+1}c_{j-1} + \dots) = \pm ( c_j^2 - 2 c_{j-1}c_{j+1} + 2 c_{j-2}c_{j+2} +
  \dots)$ and with $\ph(I)\cdot S = J$.  The map $\ph$ doubles degrees, so if
  we introduce the notation $\degreemult S k$ to denote the operation on a
  graded ring $S$ that multiplies all the degrees by $k$, then we may regard
  $\ph$ as a homogeneous map $\ph : \degreemult{A[d]}{2} \to S$.  Since the
  inequality $c_j^2 > c_{j-t}c_{j+t}$ holds in the reverse lexicographic
  ordering (for $t > 0$), we see that $\tph(d_{j}) = \l(\ph(d_{j})) = c_j^2$,
  and thus $\ph$ is monic in the sense of Definition \ref{definition:monicmap}.
  Proposition \ref{proposition:meatyprop} provides a \Gb{} $G$ of $I$ whose lead terms are
  the monomials $d^\beta$ with $\beta_1 + \dots + \beta_m = n-m+1$.  Let $G' =
  \ph(G)$; its lead terms are the (square) monomials $c^{2\beta}$, with the
  same set of exponents $\beta$.  It is enough to show that $G'$ is a \Gb{} of
  $J$.

  By Lemma \ref{lemma:specialization} we may pass to the universal situation by
  replacing $S$ by the ring $\ZZ[b_2,\dots,b_{2n},c_1,\dots,c_m]$, promoting
  the ring elements $b_{2i}$ to variables.  To ensure that the lemma applies,
  as monomial ordering we take the block ordering $[c] \gg [b]$ that restricts
  to the graded reverse lexicographic ordering on $[b]$ and on $[c]$ (see
  Definition \ref{prodorddef}).  Assigning degrees $\deg b_{2j} = 2j$ and $\deg
  c_j = j$, we see that $J$ and the elements of $G$ are homogeneous.
  Similarly, we may replace $A[d]$ by the ring
  $\ZZ[b_2,\dots,b_{2n},d_1,\dots,d_m]$, but this time we take $\deg b_{2j} =
  j$ so that $I$ becomes homogeneous in it.  Moreover, the map $\ph$ remains
  homogeneous, for it sends $b_{2j}$ of degree $j$ in $\ZZ[b,d]$ to $b_{2j}$ of
  degree $2j$ in $\ZZ[b,c]$.

  By Lemma \ref{lemma:gbfields} we may replace $\ZZ$ by a field $F$.  In this
  situation, a finitely generated graded $S$-module $M$ has a Hilbert series.
  We will show that $\l(J) = \langle \l(G') \rangle$ by comparing the Hilbert
  series of the two ideals.

  The ideal $J$ is generated by the coefficients of $f(x) \rem g(x) g^*(x)$,
  which is homogeneous of degree $2n$ when $\deg x = 1$, so the coefficients of
  $x^{2m-2},x^{2m-4},\dots,x^2,1$ have complementary degrees $2n-2m+2, 2n-2m+4,
  \dots, 2n-2, 2n$.  The low-order coefficients $b_{2n-2m+2}, b_{2n-2m+4},
  \dots, b_{2n-2}, b_{2n}$ of $f(x)$ don't participate in the division, so each
  one appears as an isolated term in the equation of the same degree arising from $f\%g\equiv 0$ and in no other equations.  Eliminating those variables eliminates all the equations,
  too, so we see that $S/J \cong F[b_{2},\dots,b_{2n-2m},c_1,\dots,c_m]$,
  allowing its Hilbert series to be determined: $$H_F(S/J,T) = \frac{1}
  {(1-T^{2n-2m}) \dots (1-T^{2})\cdot(1-T^m) \dots (1-T^1)}.$$ From the observation
  in Definition \ref{definition:redI} that $\red_J S$ is the free graded
  $A$-submodule of $S$ generated by the monomials not contained in $\l(J)$, we
  see that $H_F(S/J,T) = H_F(S/\l(J),T)$.

  The map $\ph$ is monic, so $H_F(\degreemult {PF_m(f^{(2)})} 2,T) = H_F(PF_m(f^{(2)}),T^2)$
  tallies the distinct square monomials $c^{2\beta}$ not in
  $\langle\l(G')\rangle$.  Since $\l(G')$ consists just of square monomials,
  the additional monomials not in $\langle\l(G')\rangle$ can be obtained by
  multiplying each of the square monomials $c^{2\beta}$ by each of the
  square-free monomials $c^\delta$, which are tallied by $(1+T^m)\dots (1+T)$, yielding:
  \begin{align*}
      H_F(S/\langle\l(G')\rangle,T)
      & = H_F(PF_m(f^{(2)}),T^2)  \cdot (1+T^m) \dots (1+T)
  \end{align*}
  The identity 
  \begin{align*}
      H_F(PF_m(f^{(2)}),T) & = \frac 1 {(1-T^{2n-2m}) \dots (1-T^2)\cdot(1-T^{2m}) \dots (1-T^2)}
  \end{align*}
  was established in the proof of Proposition \ref{proposition:meatyprop}, after reductions
  analogous to the ones above were made to get the ring $PF_m(f^{(2)})$ to be
  graded.  Using that we get:
  \begin{align*}
      H_F(S/\langle\l(G')\rangle,T)
      & = \frac{(1+T^m) \dots (1+T)} {(1-T^{2n-2m}) \dots (1-T^2)\cdot(1-T^{2m}) \dots (1-T^2)} \\
      & = \frac
          {1}
          {(1-T^{2n-2m}) \dots (1-T^2)\cdot(1-T^{m})\dots(1-T^{1})} \\
      & = H_F(S/J,T) = H_F(S/\l(J),T)
  \end{align*}
  It follows from additivity over short exact sequences that the ideals
  $\langle\l(G')\rangle \subseteq \l(J)$ have the same Hilbert series over the
  field $F$, and thus they are equal, establishing the proposition.
\end{proof}

\begin{remark}
  As in Corollary \ref{meatyprop:Gb}, the ideal $J$
  has a minimal \Gb{} consisting of the maximal minors of a matrix.  For instance,
  the matrix has the following form if $n-m=4$, where $d_i$ is a quadratic polynomial
  in $c_1, \ldots, c_m$, with lead term $c_i^2$.
  \[\begin{pmatrix}
             1  &  1 &   0 &   0&   0&   0  \\
             b_2&d_1 &   1 &   0&   0&   0  \\
             b_4&d_2 & d_1 &   1&   0&   0  \\
             b_6&d_3 & d_2 & d_1&   1&   0  \\
             b_8&d_4 & d_3 & d_2& d_1&   1  \\
             b_{10}&d_5 & d_4 & d_3& d_2& d_1  \\
             b_{12}&d_6 & d_5 & d_4& d_3& d_2  \\
             b_{14}&d_7 & d_6 & d_5& d_4& d_3  \\
   \vdots&\vdots&\vdots&\vdots&\vdots&\vdots\\
         b_{2n-2}&  0 &   0 &   0& d_m& d_{m-1}  \\
             b_{2n} &  0 &   0 &   0&   0& d_m  \\
        \end{pmatrix}.
  \]
  As in the flag bundle case, this \Gb{} is often not auto-reduced.
\end{remark}

Now we consider the general universal factorization and prove the isotropic
analogue of theorem \ref{theorem:ourtheorem}.  We omit its analogous proof.

\begin{theorem}\label{theorem:ourtheorem2}
  Given a monic polynomial $f(x) = x^{2n} + b_2 x^{2n-2} + \dots + b_{2n-2} x^2
  + b_{2n} \in A[x]$ given $r \in \NN$ and given $n_1, \dots, n_r \in \NN$ with
  $n_1 + \dots + n_r \le n$, we let $S/J = \PF_{n_1,\dots,n_r}(f)$.  Then the
  lead term ideal $\l(J)$ is
  \begin{align*}
    \l( J )   = {}& \phantom{{}+{}} \Sq ( \langle  c_{11}, \dots, c_{1n_1} \rangle{}^{n - n_r - \dots - n_1 + 1} )  \\
     & + \Sq (\langle c_{21}, \dots, c_{2n_2}\rangle{}^{n - n_r - \dots - n_2 + 1}) \\
     & + \dots \\
     & + \Sq (\langle c_{r1}, \dots, c_{rn_r}\rangle{}^{n - n_r + 1}).
  \end{align*}
\end{theorem}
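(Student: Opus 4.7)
The plan is to mirror the proof of Theorem \ref{theorem:ourtheorem}, replacing the appeals to Proposition \ref{proposition:meatyprop} with appeals to its isotropic analogue, Proposition \ref{proposition:meatyprop2}. Concretely, I would construct $\PF^{\rm iso}_{n_1,\dots,n_r}(f)$ as a tower with $r$ stages, letting $f$ acquire one pair of factors $g_i(x) g_i^*(x)$ at a time, from the outside in. First adjoin the variables $c_{r,1},\dots,c_{r,n_r}$ and form the quotient by $\langle f(x) \rem g_r(x) g_r^*(x) \equiv 0\rangle$; then adjoin $c_{r-1,1},\dots,c_{r-1,n_{r-1}}$ and form the quotient by $\langle (f(x) \quo g_r(x) g_r^*(x)) \rem g_{r-1}(x) g_{r-1}^*(x) \equiv 0\rangle$; and so on.

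The key observation that makes each stage fit the hypotheses of Proposition \ref{proposition:meatyprop2} is that the polynomial being divided at stage $i$ has no terms of odd degree. This holds because $f(x)$ is in $A[x^2]$, and if $p(x)\in B[x^2]$ is divisible by $g_i(x)g_i^*(x) = h_i(x^2)$ for some monic $h_i \in B[y]$ of degree $n_i$, then the quotient $p(x)\quo g_i(x)g_i^*(x)$ equals $p^{(2)}(x^2)\quo h_i(x^2)$ where $p(x)=p^{(2)}(x^2)$, and hence lies in $B[x^2]$ as well. So the remaining factor at each stage is a monic polynomial of even degree in $x^2$, which is exactly the input shape required by Proposition \ref{proposition:meatyprop2}.

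Having verified the hypothesis, Proposition \ref{proposition:meatyprop2} gives, at the stage adjoining the $i$-th block of variables, a monic \Gb{} whose lead-term ideal is $\Sq(\langle c_{i,1},\dots,c_{i,n_i}\rangle^{N_i - n_i + 1})$ where $N_i := n - n_r - \dots - n_{i+1}$ is the half-degree of the polynomial being divided at that stage. Moreover this lead-term ideal involves only the newly adjoined variables $c_{i,\cdot}$. The block ordering $[c_{1,\cdot}] \gg \dots \gg [c_{r,\cdot}]$ postulated in the statement is exactly the one obtained by iterating Proposition \ref{composite}, whose block ordering $[x] \gg [t]$ places newly-introduced variables above pre-existing ones. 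Composing the \Gbs{} stage by stage via Proposition \ref{composite} therefore yields a monic \Gb{} of $J$ whose lead-term ideal is the sum of the $\Sq$ ideals listed in the theorem.

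The only real issue — and the main thing to verify carefully — is the structural remark about remainders of polynomials in $x^2$; everything else is bookkeeping that follows the template already laid down in the proof of Theorem \ref{theorem:ourtheorem}. For this reason the authors indicate that the analogous proof is omitted, and I would do the same in practice, recording only the statement of the ``no odd-degree terms'' lemma and the inductive step that feeds it into Propositions \ref{proposition:meatyprop2} and \ref{composite}.
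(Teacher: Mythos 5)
Your proposal is correct and follows exactly the route the paper indicates when it declares the proof ``analogous'' to that of Theorem \ref{theorem:ourtheorem} and omits it: a tower of $r$ stages, each handled by Proposition \ref{proposition:meatyprop2} and composed via Proposition \ref{composite}. The one point you rightly flag as needing verification --- that the quotient $p(x)\quo g_i(x)g_i^*(x)$ again lies in $B[x^2]$ because both $p$ and the monic divisor $g_ig_i^*$ do --- is precisely the extra observation (left implicit by the authors) that makes the inductive step legitimate, and your argument for it is correct.
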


The following proposition is the analogue of Proposition
\ref{proposition:integral}, allowing the computation of $\pi_*$ by an analogue
of Corollary \ref{computepilowerstar}.

\begin{proposition}\label{proposition:integral2}
  In the context of theorem \ref{theorem:ourtheorem2}, the monomial of highest
  degree not in $J$ is
  \begin{align*}
  & \phantom{{}\cdot{}} (c_{1,1} \dots c_{1,n_1-1} c_{1,n_1}^{2n - 2n_r - \dots - 2n_1 + 1})\\
  & \cdot (c_{2,1} \dots c_{2,n_2-1} c_{2,n_2}^{2n - 2n_r - \dots - 2n_2 + 1}) \\
  & \cdot\dots\\
  & \cdot (c_{r,1} \dots c_{r,n_r-1} c_{r,n_r}^{2n - 2n_r + 1}).
  \end{align*}
  The isomorphism $\ph$ of theorem \ref{theorem:isotropicgroth} sends it
  to an element $\eta \in A(\FF)$ that satisfies the equation $$\pi_*\eta = 1$$ in $A(X)$.
\end{proposition}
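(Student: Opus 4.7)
The statement has two parts: identifying the top monomial algebraically, and then proving that its image $\eta$ satisfies $\pi_*\eta=1$.

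For the first part, I would apply Theorem~\ref{theorem:ourtheorem2}. A monomial $c^\alpha=\prod c_{i,j}^{\alpha_{i,j}}$ lies outside $\l(J)$ precisely when, for every block $i$, the partial monomial $\prod_j c_{i,j}^{\alpha_{i,j}}$ lies outside $\Sq\langle c_{i,1},\dots,c_{i,n_i}\rangle^{d_i+1}$, where $d_i:=n-n_r-\dots-n_i$. Unwinding the definition of $\Sq$, a monomial lies in $\Sq\langle c_{i,1},\dots,c_{i,n_i}\rangle^{d_i+1}$ iff it is divisible by a square $c^{2\beta}$ with $|\beta|\ge d_i+1$; equivalently, iff $\sum_j\lfloor\alpha_{i,j}/2\rfloor\ge d_i+1$. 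So the condition $c^\alpha\notin\l(J)$ reduces to $\sum_j\lfloor\alpha_{i,j}/2\rfloor\le d_i$ for each $i$. Maximizing $\deg c^\alpha=\sum_{i,j}j\alpha_{i,j}$ under these constraints decouples into independent block problems, and each block is maximized by placing $\alpha_{i,j}=1$ for $j<n_i$ (free parity-one units costing nothing against the budget) together with $\alpha_{i,n_i}=2d_i+1$ (absorbing the full budget $d_i$ plus one free odd unit on the highest-weight variable). This yields exactly the claimed monomial.

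For the second part I would mimic the proof of Proposition~\ref{proposition:integral}, by induction on $r$. After restricting to an affine open of $X$ on which $\EE$ is trivial with a standard symplectic form, $\pi_*\eta$ is a constant integer on each irreducible component, and we must show it equals $1$; the base case $r=0$ is trivial since $\FF=X$ and $\eta=1$. For the inductive step, fix a trivial isotropic subbundle $\EE'\subset\EE$ of rank $n_r$ and form $\FF'':=\Fl^\iso_{n_1,\dots,n_{r-1}}((\EE')^\perp/\EE')$, an isotropic flag bundle on the symplectic reduction $(\EE')^\perp/\EE'$ of rank $2(n-n_r)$. Construct a closed immersion $\lambda:\FF''\hookrightarrow\FF$ over $X$ by imposing $\calF_{r-1}(\FF):=\pi^*\EE'$ and $\calF^{r-1}(\FF):=\pi^*(\EE')^\perp$, and pulling back the rest of the tautological filtration of $\FF''$ via the identification $\calF^{r-1}(\FF)/\calF_{r-1}(\FF)\cong\pi^*((\EE')^\perp/\EE')$. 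A dimension count yields that the codimension of the image of $\lambda$ is $n_r(4n-3n_r+1)/2$, which equals the degree of the outermost block of $\eta$.

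Granting the central identity
\[
  \lambda_* 1 \;=\; c_{r,1}(\calF^{r,r-1})\cdots c_{r,n_r-1}(\calF^{r,r-1})\,c_{r,n_r}(\calF^{r,r-1})^{2n-2n_r+1},
\]
the proof concludes cleanly. Write $\eta=\tilde\eta\cdot\lambda_*1$, where $\tilde\eta$ is the product of the first $r-1$ blocks of $\eta$; then $\lambda^*\tilde\eta=\eta'$ is the analogous top class on $\FF''$, because for $i<r$ the pulled-back quotient bundles $\calF^{i,i-1}$ agree with those on $\FF''$ by construction of $\lambda$. The projection formula combined with the inductive hypothesis $\pi''_*\eta'=1$ then gives $\pi_*\eta=\pi_*(\tilde\eta\cdot\lambda_*1)=\pi''_*(\lambda^*\tilde\eta)=1$.

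The principal obstacle is the Chern-class identity for $\lambda_*1$. The naive analog of the construction in Proposition~\ref{proposition:integral}, which would present the image of $\lambda$ as the zero locus of a section of $\pi^*(\EE')^\vee\otimes(\pi^*\EE/\calF_{r-1})$, yields an obstruction bundle of rank $n_r(2n-n_r)$---too large by exactly $\binom{n_r}{2}$, reflecting the redundancy in the condition ``$\pi^*\EE'\subset\calF_{r-1}$'' for two isotropic rank-$n_r$ subbundles paired under an alternating form. The correct obstruction bundle must therefore be a quotient of the naive one by a rank-$\binom{n_r}{2}$ piece encoding this alternating symmetry; producing its top Chern class, and showing that it gives the mixed monomial $c_{r,1}\cdots c_{r,n_r-1}c_{r,n_r}^{2n-2n_r+1}$ rather than a pure power of $c_{n_r}$, is the heart of the proof.
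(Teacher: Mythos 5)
Your combinatorial identification of the extremal monomial is fine (block by block, a monomial avoids $\Sq\langle c_{i,1},\dots,c_{i,n_i}\rangle^{d_i+1}$ exactly when $\sum_j\lfloor\alpha_{i,j}/2\rfloor\le d_i$, and the degree is maximized by one free odd unit on each variable plus the whole budget on $c_{i,n_i}$), and your geometric framework --- restrict to an affine open where $\EE$ is trivial, embed the flag bundle of the rank $2(n-n_r)$ symplectic reduction into $\FF$ by a closed immersion $\lambda$ over $X$, and induct on $r$ using the projection formula --- is the same as the paper's. But there is a genuine gap exactly where you flag it: the identity $\lambda_*1=c_{r,1}(\calF^{r,r-1})\cdots c_{r,n_r-1}(\calF^{r,r-1})\,c_{r,n_r}(\calF^{r,r-1})^{2n-2n_r+1}$ is never proved. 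You rightly note that presenting $\lambda(\FF'')$ as the zero scheme of the tautological section of a rank $n_r(2n-n_r)$ bundle cannot work, since the codimension is only $n_r(2n-n_r)-\binom{n_r}{2}$; but your proposed remedy --- an unconstructed ``quotient obstruction bundle'' of the correct rank --- is only a heuristic. You neither exhibit such a bundle, nor explain why the class of $\lambda$ would be its top Chern class (this is an excess-intersection situation, not a regular section), nor why the answer would come out as the mixed monomial $c_{r,1}\cdots c_{r,n_r-1}c_{r,n_r}^{2n-2n_r+1}$ rather than a power of a single Chern class. Since the whole point of the proposition is this formula, the argument as written does not close.

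The paper avoids the degenerate section entirely by interpolating: it chooses a complete chain of trivial totally coisotropic subbundles $\EE=\EE(0)\supseteq\EE(1)\supseteq\dots\supseteq\EE(n_r)$ with rank-one successive quotients and factors $\lambda$ through the intermediate flag bundles $\FF(i)=\Fl^\iso_{n_1,\dots,n_{r-1},n_r-i}(\EE(i)/\EE(i)^\perp)$. Each step $\lambda_i:\FF(i+1)\hookrightarrow\FF(i)$ is cut out by the vanishing of a map from a trivial line bundle to $\calF^r_{r-1}(\FF(i))$, a bundle whose rank equals the codimension, so $\lambda_{i*}1=c_\top(\calF^r_{r-1}(\FF(i)))=c_{n_r-i}(\calF^{r,r-1})\cdot c_\top(\calF^{r-1}_{r-1})$ (pulled back along the previous immersions). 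Composing gives $c_{n_r}(\calF^{r,r-1})\cdots c_1(\calF^{r,r-1})\cdot c_\top(\calF^{r-1}_{r-1})^{n_r}$, and the last factor is converted into $\pm\,c_{n_r}(\calF^{r,r-1})^{2n-2n_r}$ using the triviality of $\pi^*c(\EE)$, the symplectic identification $\calF^{r,r-1}\cong\calF_{r-1,r}^*$, and the elementary inverse-polynomial identity of Lemma \ref{lemma:JacobiTrudi} (a special case of Jacobi--Trudi); this is precisely where the exponent $2n-2n_r+1$ and the mixed shape of the monomial come from. Some such mechanism --- either this interpolation plus Lemma \ref{lemma:JacobiTrudi}, or a genuinely worked-out excess-intersection computation --- is needed before your inductive step can be carried out.
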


  First we need an elementary lemma about polynomials.

  \begin{lemma}\label{lemma:JacobiTrudi}
    Suppose $R$ is a commutative ring, $p$ and $q$ are natural numbers,
    and $c = 1 + c_1 X + \dots + c_p X^p$ and $d = 1 + d_1 X + \dots + d_q X^q$ are
    polynomials over $R$.  If $c d = 1$, then $(-1)^{p+q} c_p^q = d_q^p$.
  \end{lemma}

  \begin{proof}
    We use the idea of the proof of the Jacobi-Trudi formula
    \cite[p.~10]{macdonald-MR1488699}, of which the lemma is a special case,
    corresponding to the rectangular conjugate partitions $p+p+\dots+p = q p =
    p q = q + \dots +q$; the method used here can be used to clarify the
    proof of the general case.  Let $V$ be the free $R$-module $R[X]/X^{p+q}$.
    Multiplication by $c$ and $d$ provides inverse linear maps $C, D : V \to V$
    of determinant $1$ whose matrices with respect to the basis $1, X, \dots,
    X^{p+q-1}$ are lower triangular with $1$'s on the diagonal.  We get the
    result from Laplace expansion by taking advantage of a pair of
    complementary upper triangular minors and calculating in $\det V = \Lambda
    ^{p+q} V$, which is a free module with basis $\{ X^0 \wedge \dots \wedge
    X^{p+q-1} \}$.
    \begin{align*}
      & \phantom{{}={}}
          (-1)^{p+q} c_p^q \cdot ( X^0 \wedge \dots \wedge X^{p+q-1} ) \\
      & = (-1)^{p+q} \cdot \left ( X^0 \wedge \dots \wedge X^{p-1} \wedge (\dots + c_p X^p) \wedge \dots \wedge (\dots + c_p X^{p+q-1}) \right ) \\
      & = (-1)^{p+q} \cdot \left ( X^0 \wedge \dots \wedge X^{p-1} \wedge c X^0 \wedge \dots \wedge c X^{q-1} \right ) \\
      & = c X^0 \wedge \dots \wedge c X^{q-1} \wedge X^0 \wedge \dots \wedge X^{p-1} \\
      & = c X^0 \wedge \dots \wedge c X^{q-1} \wedge c d X^0 \wedge \dots \wedge c d X^{p-1} \\
      & = (\det C) \cdot (X^0 \wedge \dots \wedge X^{q-1} \wedge d X^0 \wedge \dots \wedge d X^{p-1}) \\
      & = X^0 \wedge \dots \wedge X^{q-1} \wedge ( \dots + d_q X^q) \wedge \dots \wedge ( \dots + d_q X^{p+q-1}) \\
      & = d_q^p \cdot ( X^0 \wedge \dots \wedge X^{p+q-1} )
    \end{align*}
  \end{proof}

  We begin the proof of Proposition \ref{proposition:integral2}, following the same lines as in the
  proof of Proposition \ref{proposition:integral}.

  \begin{proof}[Proof of Proposition \ref{proposition:integral2}] The degrees of the elements in the claim are equal, so assuming $X$ is
  irreducible, there is an integer $m$ so that $\pi_*\eta = m$.  In order to
  prove $m=1$ we may replace $X$ by a non-empty affine open subset where $\EE$
  is trivial and has a complete chain of trivial totally isotropic or
  coisotropic subbundles of all intermediate ranks, from which we will draw all
  needed trivial subbundles.  Let $\EE'$ be a trivial totally coisotropic
  subbundle of $\EE$ of rank $2n-n_r$, consider $\EE'/\EE'^\perp$ with its
  induced alternating nonsingular form, and consider the flag bundle $\FF' :=
  \Fl^\iso_{n_1,\dots,n_{r-1}}(\EE'/\EE'^\perp)$.  There is a closed immersion
  $\lambda : \FF' \hookrightarrow \FF$ defined by taking $\lambda^* \calF(\FF)$
  to be the preimage of $\calF(\FF')$ in $\pi_{\FF'}^* \EE$, with $\pi_{\FF'}^*
  \EE$ and $0$ appended.

  We interpolate further between $\FF'$ and $\FF$.  Pick a complete flag of
  trivial totally coisotropic subbundles $\EE = \EE(0) \supseteq \EE(1)
  \supseteq \dots \supseteq \EE(n_r) = \EE'$ with successive quotients of rank
  $1$, and introduce the flag bundles $\FF(i) :=
  \Fl^\iso_{n_1,\dots,n_{r-1},n_r-i}(\EE(i)/\EE(i)^\perp)$.  There are natural
  closed immersions $\FF' \cong \FF(n_r) \buildrel {\lambda_{n_r-1}} \over
  \hookrightarrow \dots \buildrel {\lambda_1} \over \hookrightarrow \FF(1)
  \buildrel {\lambda_0} \over \hookrightarrow \FF(0) = \FF$ over $X$ whose
  composite is $\lambda$.

  The image of the closed immersion $\lambda_i : \FF(i+1) \hookrightarrow
  \FF(i)$ is defined by the vanishing of the composite map $ \pi_{\FF(i)}^*
  (\EE(i+1)^\perp/\EE(i)^\perp) \hookrightarrow \pi_{\FF(i)}^*
  (\EE(i)/\EE(i)^\perp) = \calF^r(\FF(i)) \twoheadrightarrow
  \calF^r_{r-1}(\FF(i))$, because on that zero locus, we have the containment
  $\pi_{\FF(i)}^* (\EE(i+1)^\perp/\EE(i)^\perp) \subseteq \calF_{r-1}(\FF(i))$,
  which implies also that $\calF^{r-1}(\FF(i)) \subseteq \pi_{\FF(i)}^*
  (\EE(i+1)/\EE(i)^\perp) $.  The source of the composite map is a trivial line
  bundle, and the codimension of $\lambda_i$ is the rank of
  $\calF_r(\FF(i))/\calF_{r-1}(\FF(i))^\perp$, for otherwise we wouldn't get a
  cycle class of dimension $0$ at the end of the proof, so
  \begin{align*}
    \lambda_{i*}( 1 ) & = c_\top(\calF^r_{r-1}(\FF(i))) \\ 
    & = c_\top(\calF^{r,r-1}(\FF(i))) \cdot c_\top(\calF^{r-1}_{r-1}(\FF(i))) \\
    & = c_{n_r-i}(\calF^{r,r-1}(\FF(i))) \cdot c_\top(\calF^{r-1}_{r-1}(\FF(i))) \\
    & = \lambda_{i-1}^* \dots \lambda_0^*  \left ( c_{n_r-i}(\calF^{r,r-1}(\FF)) \cdot c_\top( \calF^{r-1}_{r-1}(\FF)) \right ) \\
    & = \lambda_{i-1}^* \dots \lambda_0^*  \left ( c_{n_r-i}(\calF^{r,r-1}) \cdot c_\top( \calF^{r-1}_{r-1}) \right )
  \end{align*}
  Composing and using the product rule and Lemma \ref{lemma:JacobiTrudi} we get 
  \begin{align*}
    & \phantom{{}={}}
    \lambda_*( 1 ) = \lambda_{0*} \lambda_{1*} \dots \lambda_{n_r-1\,*} ( 1 ) \\
    & = c_{n_r}(\calF^{r,r-1}) \cdot \ldots \cdot c_{1}(\calF^{r,r-1}) \cdot  c_\top(\calF^{r-1}_{r-1})^{n_r} \\
    & = c_{n_r}(\calF^{r,r-1}) \cdot \ldots \cdot c_{1}(\calF^{r,r-1}) \cdot (-1)^{n_r(n-n_r)} \left ( c_{n_r}(\calF^{r,r-1}) \cdot c_{n_r}(\calF_{r-1,r})) \right ) ^{n - n_r} \\
    & = c_{n_r}(\calF^{r,r-1}) \cdot \ldots \cdot c_{1}(\calF^{r,r-1}) \cdot (-1)^{n_r(n-n_r)} \left ( (-1)^{n_r} c_{n_r}(\calF^{r,r-1})^2) \right ) ^{n - n_r} \\
    & = c_{1}(\calF^{r,r-1}) \cdot \ldots \cdot c_{n_r-1}(\calF^{r,r-1}) \cdot c_{n_r}(\calF^{r,r-1}) ^{2n - 2n_r + 1}
  \end{align*}
  
  One proves that $\eta$ is the class of a section by induction, where when $r$
  is replaced by $r-1$, $n$ is replaced by $n-n_r$ and $\FF$ is replaced by
  $\FF'$.  That finishes the proof.
\end{proof}

\begin{example}
  Let $H$ denote a hyperbolic
  plane over a field $F$, i.e., a vector space $F x \oplus F y$ of dimension
  $2$ with alternating form defined by $\langle x, y \rangle = 1$.

  Let $\pi : \FF = \Fl^\iso_3(H^3) \longrightarrow \Spec F$ be the isotropic
  Grassmannian of isotropic 3-planes in the six dimensional vector space $H^3$.
  Points of $\FF$ may be identified with isotropic 3-planes in $H^3$.
  The variety $\FF$ has dimension 6, and it has degree 16 in its Pl\"ucker embedding.

  The variety $\FF$ comes equipped with an exact sequence of vector bundles
  \[\xymatrix{
    0 \ar[r] & Q^* \ar[r] & \pi^*(H^3) \ar[r] & Q \ar[r] & 0,
   }
  \]
  where the rank 3 subbundle $Q^*$ is totally isotropic, and $Q$ is totally coisotropic.
  Let $c_i := c_i(Q)$, for $i=1,2,3$.  

  The intersection ring $A(\FF)$ of $\FF$ is
  \[ A(\FF) = \ZZ[c_1, c_2, c_3]/(c_1^2-2c_2, c_2^2-2c_1c_3, c_3^2).
  \]
  The given relations form a \Gb{} with respect to the graded reverse
  lexicographic monomial ordering.  The class of a point is represented by the monomial $c_1 c_2 c_3$.

  Let $W_i$ be a fixed totally isotropic subspace of $H^3$ of dimension $i$, for $i=1,2,3.$
  The class $c_1$ is the class of the subvariety 
    \[\sigma_1(W_3) := \{ V \in \FF \mid
    V \cap W_3 \neq 0\}.
    \]
  The subvariety $\sigma_1(W_3)$ has codimension 1 and degree 16, and is the
  hyperplane section of $\FF$ in the Pl\"ucker embedding ($c_1^6 = 16 c_1 c_2 c_3$).

  The class $c_2$ is the class of
    \[\sigma_2(W_2) := \{ V \in \FF \mid
    V \cap W_2 \neq 0\}.
    \]
  This codimension 2 subvariety has degree 8 in the Pl\"ucker embedding (%
  $c_2 c_1^4 = 4 c_2^3 = 8 c_1 c_2 c_3$).

  The class $c_3$ is the class of a zero section of $Q$:
    \[\sigma_3(W_1) := \{ V \in \FF \mid
    W_1 \subset V \}.
    \]
  This codimension 3 subvariety has degree 2 in the Pl\"ucker embedding ($c_1^3 c_3 = 2 c_1 c_2 c_3$).
\end{example}

\begin{example}
  We present some sample computations made by {\em Macaulay2}, using the
  algorithmic methods developed in this section.  As in the previous example, 
  let $H$ denote a hyperbolic
  plane over a field $F$, i.e., a vector space $F x \oplus F y$ of dimension
  $2$ with alternating form defined by $\langle x, y \rangle = 1$.
  \begin{align*}
    \int c_1 ( \calF^{1,0} ( \Fl^\iso_2(H^2))) ^ 3 & = 2 \\
    \int c_1 ( \calF^{1,0} ( \Fl^\iso_2(H^3))) ^ 7 & = 14 \\
    \int c_1 ( \calF^{1,0} ( \Fl^\iso_2(H^4))) ^ {11} & = 132 \\
    \int c_1 ( \calF^{1,0} ( \Fl^\iso_2(H^5))) ^ {15} & = 1430 \\
    \int c_1 ( \calF^{1,0} ( \Fl^\iso_3(H^3))) ^ 6 & = 16 \\
    \int c_1 ( \calF^{1,0} ( \Fl^\iso_4(H^4))) ^ {10} & = 768 \\
    \int c_1 ( \calF^{1,0} ( \Fl^\iso_5(H^5))) ^ {15} & = 292864
  \end{align*}
\end{example}

 \bibliographystyle{plain}
\bibliography{papers.bib}

\Addresses
\end{document}